\documentclass[11pt]{amsart}
\usepackage{amsmath}
\usepackage{amssymb}
\usepackage{amscd}
\usepackage[all]{xy}
\usepackage{xypic}
\usepackage{comment}
\usepackage{graphicx}
\usepackage[capitalize]{cleveref}

\numberwithin{equation}{section}

\def\today{\number\day\space\ifcase\month\or   January\or February\or
March\or April\or May\or June\or   July\or August\or September\or
October\or November\or December\fi\   \number\year}

\newtheorem{Thm}{Theorem}[section]
\newtheorem{Cor}[Thm]{Corollary}
\newtheorem{Lemma}[Thm]{Lemma}
\newtheorem{Prop}[Thm]{Proposition}

\theoremstyle{definition}
\newtheorem{Def}[Thm]{Definition}

\newtheorem{qst}[Thm]{Question}
\newtheorem{rmk}[Thm]{Remark}
\newtheorem{cnv}[Thm]{Convention}
\newtheorem{ntn}[Thm]{Notation}
\newtheorem{exa}[Thm]{Example}

\newcounter{TmpEnumi}

\newcommand{\limi}[1]{\lim_{{#1} \to \infty}}

\newcommand{\Q}{{\mathbb{Q}}}
\newcommand{\Z}{{\mathbb{Z}}}
\newcommand{\R}{{\mathbb{R}}}
\newcommand{\C}{{\mathbb{C}}}
\newcommand{\N}{{\mathbb{N}}}
\newcommand{\Nz}{{\mathbb{Z}}_{\geq 0}}

\newcommand{\card}{\operatorname{card}}
\newcommand{\Aut}{\operatorname{Aut}}
\newcommand{\Ad}{{\operatorname{Ad}}}

\newcommand{\Cu}{{\operatorname{Cu}}}
\newcommand{\spec}{\operatorname{sp}}
\newcommand{\ev}{\operatorname{ev}}
\newcommand{\At}{\operatorname{At}}

\newcommand{\Zh}{\mathcal{Z}}

\newcommand{\fr}{*_{\operatorname{r}}}
\newcommand{\id}{\operatorname{id}}

\newcommand{\eps}{\varepsilon}
\numberwithin{equation}{section}

\newcommand{\dirlim}{\varinjlim}

\newcommand{\bigastr}{\mathop{\scalebox{1.7}{$\;*$}_r}\limits}
\newcommand{\bigast}{\mathop{\scalebox{1.7}{$\;*$}\,\,}\limits}

\newcommand{\af}{\alpha}
\newcommand{\bt}{\beta}
\newcommand{\gm}{\gamma}
\newcommand{\dt}{\delta}
\newcommand{\ep}{\varepsilon}
\newcommand{\zt}{\zeta}
\newcommand{\et}{\eta}
\newcommand{\ch}{\chi}
\newcommand{\io}{\iota}
\newcommand{\te}{\theta}
\newcommand{\ld}{\lambda}
\newcommand{\sm}{\sigma}
\newcommand{\kp}{\kappa}
\newcommand{\ph}{\varphi}
\newcommand{\ps}{\psi}
\newcommand{\rh}{\rho}
\newcommand{\om}{\omega}
\newcommand{\ta}{\tau}

\newcommand{\Sm}{\Sigma}

\newcommand{\cT}{{\mathcal{T}}}

\newcommand{\andeqn}{\qquad {\mbox{and}} \qquad}

\newcommand{\sandeqn}{\quad {\mbox{and}} \quad}

\newcommand{\Wolog}{Without loss of generality}

\newcommand{\ifo}{if and only if}

\newcommand{\ca}{C*-algebra}
\newcommand{\uca}{unital C*-algebra}

\newcommand{\hm}{homomorphism}
\newcommand{\uhm}{unital homomorphism}

\newcommand{\fd}{finite dimensional}
\newcommand{\tst}{tracial state}

\newcommand{\pj}{projection}

\newcommand{\ct}{continuous}

\newcommand{\cms}{compact metric space}

\newcommand{\I}{\infty}
\renewcommand{\S}{\subset}
\newcommand{\SQ}{\subseteq}
\newcommand{\E}{\varnothing}

\title[An isomorphism theorem for infinite reduced free
 products]{An isomorphism theorem for infinite reduced free products}

\author{Ilan Hirshberg}
\address{Department of Mathematics, Ben-Gurion University of the Negev,
P.O.B. 653, Be'er Sheva 84105, Israel}

\author{N. Christopher Phillips}
\address{Department of Mathematics, University  of Oregon,
Eugene OR 97403-1222, USA.}

\subjclass[2020]{46L09}
\thanks{This material is based upon work supported by the
  US National Science Foundation under
   Grant DMS-2400332, and by the
 US-Israel Binational Science Foundation.}

\date{11~June 2026}

\begin{document}

\begin{abstract}
Let $(C_n)_{n \in {\mathbb{N}}}$ be a sequence
of separable unital C*-algebras, equipped with
faithful tracial states and satisfying a mild condition.
Let $A$ be a unital direct limit of one dimensional NCCW complexes,
also equipped with a faithful tracial state.
Suppose there is a unital trace preserving embedding of $A$
in the Jiang-Su algebra which is an isomorphism on K-theory.
(For example, $A$ could be $C ([0, 1])$ with Lebesgue measure,
or the Jiang-Su algebra itself.)
Let $D$ be the infinite reduced free product
of the algebras $C_n$, amalgamated over~${\mathbb{C}}$.
Then the reduced free product $A \fr D$,
amalgamated over~${\mathbb{C}}$, is isomorphic to~$D$.

If $D$ is exact and the factors satisfy
a blockwise real rank zero condition,
then in place of $A$ we can use
$C (X)$ for any contractible compact metric space $X$
and any faithful tracial state on $C (X)$.

An example consequence is that the reduced free product
$C ([0, 1])^{\fr \infty}$, with Lebesgue measure,
is isomorphic to $\Zh^{\fr \infty}$.
\end{abstract}

\maketitle

\section{Introduction}\label{sec_5Z17_Intro}

We prove isomorphism theorems for infinite reduced free products.
These theorems can be regarded as very basic cases of
classification of infinite reduced free products
in terms of their Elliott invariants in the nonnuclear setting.
Specifically, if $D$ is the infinite reduced free product
of a sequence of
separable unital \ca{s} $C_n$ satisfying very mild conditions,
and $A$ is a suitable direct limit of one dimensional
NCCW complexes,
in particular with $K_0 (A) = \Z \cdot [1_A]$ and $K_1 (A) = 0$,
we show that the reduced free product $A \fr D$ is isomorphic to $D$.
If the factors satisfy
a blockwise real rank zero condition,
in particular, when $C_n$ has real rank zero for every~$n$,
and $D$ is exact,
we may also take $A = C (X)$ for a contractible \cms~$X$.
As a concrete example, we get the isomorphism
\[
C ([0, 1])^{\fr \infty} \cong \Zh^{\fr \infty},
\]
using Lebesgue measure on $[0, 1]$
and the unique tracial state on the Jiang-Su algebra~$\Zh$.
The C*-algebras in our theorems are not nuclear and not $\Zh$-stable.

The von Neumann algebra prototype is
\cite[Theorem~1.5 (in Section~2)]{DkmRdl}:
if $A_1, A_2, \ldots$ are arbitrary factors of type $\mathrm{II}_1$
with separable preduals, then
\[
L (F_{\infty}) \ast \bigast_{n = 1}^{\I} A_n
 \cong \bigast_{n = 1}^{\I} A_n.
\]
That is, an infinite tracial free product of factors absorbs an extra
free factor of $L (F_{\infty})$.
In particular, using Lebesgue measure on
$[0, 1]$, it also absorbs a diffuse abelian free factor:
\[
L^{\infty} ([0, 1]) \ast \bigast_{n = 1}^{\I} A_n
 \cong \bigast_{n = 1}^{\I} A_n.
\]

Reduced free products were constructed by Avitzour~\cite{Avitzour}.
Given unital \ca{s} $A$ and $B$ with specified states $\et$ and~$\om$,
their unital reduced free product is
\[
 (A, \et) \fr (B, \om) = (A \fr B,\, \et \fr \om),
\]
where $\et \fr \om$ is the free product state.
Here, and throughout, \ca{s} are unital, and reduced free products are
amalgamated over $\C \cdot 1$.
Infinite reduced free products are obtained as direct limits of the
corresponding finite reduced free products; they already appear in
Section~4 of~\cite{Avitzour}.
We denote the $n$-fold reduced free product of
$(A_1,\omega_1), (A_2,\omega_2), \ldots$ by
$\bigastr_{k = 1}^{n} \, (A_k, \om_k)$, including the case $n = \infty$,
and omit the states from the notation when they are understood.
If all the factors are copies of the same pair $(A, \om)$, we write
$(A, \om)^{\fr n}$, or just $A^{\fr n}$.
For context, recall that if reduced \ca{s} of discrete groups are
equipped with their standard tracial states, then
\[
C_{\operatorname{r}}^{*} (G) \fr C_{\operatorname{r}}^{*} (H)
  \cong C_{\operatorname{r}}^{*} (G * H),
\]
and that $C (S^1)^{\fr n}$, with Haar measure on~$S^1$, is isomorphic
to $C_{\operatorname{r}}^{*} (F_n)$.

Using the notation above, our main theorems say that
\[
A \fr \bigastr_{n=1}^{\infty} C_n
   \cong \bigastr_{n=1}^{\infty} C_n
\]
under the following two sets of hypotheses.
\begin{enumerate}
\item\label{I_5Z17_Imt}
$A$ is a unital countable direct limit of one dimensional NCCW complexes
(Definition~2.4.1 of~\cite{EilrLPd}; also see~\cite[2.2]{Robert_NCCW})
equipped with a faithful \tst~$\rh$, and admits a trace preserving
embedding into~$\Zh$ which is an isomorphism on K-theory.
The algebras $C_1, C_2, \ldots$ are separable unital \ca{s}, equipped
with faithful \tst{s}~$\sm_n$,
and their tracial states do not concentrate too rapidly on atoms,
in a sense made precise in \cref{thm_main}.
\item\label{I_5Z17_RRZ}
$A \cong C (X)$ for a contractible \cms~$X$, equipped with a faithful
\tst~$\rh$.
The algebras $C_1, C_2, \ldots$ are separable unital exact \ca{s},
equipped with faithful \tst{s}~$\sm_n$, and satisfy the following
additional condition.
There is a partition of $\N$ into subsets $I_1, I_2, \ldots$ such that,
for every $m \in \N$, the set $I_m$ is infinite, the atom condition in
(\ref{I_5Z17_Imt}) holds along $I_m$, and the subgroup generated by the
sets $(\sm_k)_* (K_0 (C_k))$, for $k \in I_m$, is dense in~$\R$.
\end{enumerate}

These results can be regarded as a first step towards classification of
infinite reduced free products by their Elliott invariants.
The computations of the relevant invariants use existing results.
The K-theory is obtained by combining~\cite{Thn2} and~\cite{Hsgw}, and
\cite[Corollary on page~431]{Avitzour} together with
\cite[Proposition~6.3.2]{Robert_NCCW} gives unique tracial state and
strict comparison of positive elements.
These computations imply that the natural inclusion
\[
 \bigastr_{n = 1}^{\infty} C_n
   \to A \fr \bigastr_{n = 1}^{\infty} C_n
\]
is an isomorphism on the Elliott invariants.

The algebras in our theorems are never nuclear;
this follows from \cite[Theorem~4.6]{Dykm2}
and the fact that free group factors are not injective.
Moreover, in~(\ref{I_5Z17_Imt}), if at least one of the free factors
$C_n$ is not exact, then the resulting free product is not exact.
The algebras are also never $\Zh$-stable.
It is conceivable, although our methods do little towards this,
that, under suitable assumptions on the states,
in particular that infinitely many of them are not pure,
two infinite reduced free products of
purely infinite simple separable nuclear unital \ca{s} satisfying the
Universal Coefficient Theorem are isomorphic whenever they have isomorphic
K-theory.
We give further discussion of the purely infinite case starting before
\cref{P_6302_UCT}.

There are clear limits to any classification statement
based only on K-theoretic data.
Simply tensoring with~$\Zh$ gives a nonisomorphic \ca.
Real rank zero does not help.
See \cref{Ex_6601_UHF}.
More dramatically, at least for finite reduced free products,
one can tensor with a UHF algebra and form the crossed product
by an extremely well behaved action of a finite cyclic group
(of a kind known to preserve both $\Zh$-stability
and the Universal Coefficient Theorem), but get
an algebra not isomorphic to its opposite algebra.
See \cref{Ex_6603_Opp} and the discussion afterwards.

We have no results for finite free products.
Here, the von Neumann algebraic analogs are understood:
\cite{Dykm2} determines, up to isomorphism, free
products of finite dimensional von Neumann algebras and hyperfinite
von Neumann algebras of type $\mathrm{II}_1$ with separable preduals,
with respect to faithful normal tracial states, modulo the still open
question of whether the factors associated to different free groups are
isomorphic.

Our proofs use Elliott approximate intertwining arguments.
The required uniqueness comes from uniqueness theorems, up to
approximate unitary equivalence, for \hm{s} from countable direct limits
of one dimensional NCCW complexes to \ca{s} with stable rank
one~\cite{Robert_NCCW}, and from $C (X)$ to infinite dimensional simple
separable unital exact \ca{s} with real rank zero, stable rank one,
weakly unperforated~$K_0$, and finitely many extreme tracial
states~\cite{Matui}.
We do not prove uniqueness theorems for \hm{s} from algebras such as
$C ([0, 1]) \fr C ([0, 1])$, and such theorems are very unlikely
to hold in a form useful here.
Indeed, the free flip on $C ([0, 1]) \fr C ([0, 1])$ induces the identity
map on the Elliott invariant but is not approximately inner.
To see this, note that the weak closure under the
Gelfand-Naimark-Segal representation associated with the trace is the
free group factor $L (F_2)$.
The free flip on $L (F_2)$ is not inner
(\cite[Example~2.5(1)]{Kallman}), and since $L (F_2)$ does not have
property~$\Gamma$, all approximately inner automorphisms of $L (F_2)$
in the von Neumann algebraic sense are inner
(\cite[Corollary~3.8]{Connes}).
For a more striking example, let $D$ be the closed unit disk, equipped
with normalized Lebesgue measure.
For $z \in S^1$, let $\beta_z$ be the automorphism of
$C(D) \fr C([0, 1])$ induced by rotating the disk by~$z$ and fixing the
free factor $C([0, 1])$.
One can show that these automorphisms act trivially on the Elliott
invariant and on Hausdorffized algebraic $K_1$, but are pairwise not
approximately unitarily equivalent.

Approximate unitary equivalence is used
in Elliott approximate intertwining
arguments because they require knowing, for certain
unital \hm{s} $\ph \colon A \to B$, that if $\af \in \Aut (A)$, then
there is $\bt \in \Aut (B)$ such that $\bt \circ \ph = \ph \circ \af$.
When $\af$ is conjugation by a unitary $u \in A$,
then $\bt$ can be taken to be conjugation by $\ph (u)$.
In our proofs we have $B \cong A \fr D$ for some~$D$, and the relevant
automorphism is $\bt = \af \fr \id_D$, which is not inner,
but does extend~$\af$.

This paper is organized as follows.
In the rest of the introduction, we collect some notation.
Section~\ref{sec_preliminaries} contains preliminaries on one dimensional
NCCW complexes, reduced free products, and the form of Elliott
approximate intertwining we use.
The main theorems, together with corollaries and examples, are in
Section~\ref{Sec_5Y27_Main}.
In Section~\ref{Sec_5Z17_Open}, we give examples to show
some limitations on possible generalizations,
and discuss a number of open problems.

We take $\N = \{ 1, 2, \ldots \}$.
If $A$ is a unital \ca{} and $u \in A$ is unitary, then $\Ad (u)$ is the
automorphism $a \mapsto u a u^*$.
% The Jiang-Su algebra is denoted by~$\Zh$.

\begin{ntn}\label{Nt_5Z19_FP}
As discussed above, all reduced free products are amalgamated over
$\C \cdot 1$.
The reduced free product of $n$~copies, possibly with $n = \infty$, of
the same unital \ca{} $A$ with the same state $\om$ is abbreviated to
$(A, \om)^{\fr n}$.

When the states used in the construction of a reduced free product are
understood, they are omitted from the notation, giving, for example,
$A^{\fr n}$.
Unless otherwise specified, the state used on~$\Zh$ is its unique \tst,
and the state used on $C ([0, 1])$ is Lebesgue measure.
\end{ntn}

\begin{cnv}\label{Cv_5Z19_DLim}
All direct systems are indexed by~$\N$.
We specify them in the form
$\bigl( ( A_n)_{n \in \N}, ( \alpha_{n + 1, \, n})_{n \in \N} \bigr)$
or the form
$\bigl( ( A_n)_{n \in \N}, ( \alpha_{n, m})_{n \geq m} \bigr)$
as convenient.
In the second form, we require
$\af_{n, m} \circ \alpha_{m, l} = \alpha_{n, l}$,
and the maps are related to the first form by
$\af_{n, m}
  = \af_{n, \, n - 1} \circ \af_{n - 1, \, n - 2} \circ \cdots
    \circ \af_{m + 1, \, m}$.
In either form, following the notation of the second form, we let
$\af_{\infty, n}$ denote the canonical map from $A_n$ to the direct
limit.
\end{cnv}

We thank Miles Gould, Adrian Ioana, Narutaka Ozawa, Leonel Robert,
and Hannes Thiel for some pointers and comments.

\section{Preliminaries}\label{sec_preliminaries}

This section contains preliminary results needed for the proof
of the main theorem.

We record the following simple fact.
It is well known and follows immediately from the definition
of reduced free products, so we omit the proof.

\begin{Lemma}\label{lem_extending_maps_along_free_products}
Let $(A_1, \tau_1)$, $(A_2, \tau_2)$, $(B_1, \sigma_1)$,
and $(B_2, \sigma_2)$ be unital C*-algebras with given states.
Suppose that for $j=1, 2$ we have unital homomorphisms
$\varphi_j \colon A_j \to B_j$
such that $\sigma_j \circ \varphi_j = \tau_j$.
Then these homomorphisms uniquely define a homomorphism
\[
\varphi_1 \fr \varphi_2 \colon (A_1, \tau_1) \fr (A_2, \tau_2)
\to (B_1, \sigma_1) \fr (B_2, \sigma_2).
\]
\end{Lemma}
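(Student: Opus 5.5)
The plan is to build $\varphi_1 \fr \varphi_2$ directly on the free product Hilbert space, as a spatial map induced by an isometry. Then check that it is a homomorphism with the right values on the two free factors, and finally deduce uniqueness from generation. Let $(H_j, \pi_j, \xi_j)$ be the GNS triple of $(A_j, \tau_j)$ and $(K_j, \rho_j, \eta_j)$ that of $(B_j, \sigma_j)$.

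Since $\sigma_j \circ \varphi_j = \tau_j$, the formula
\[
V_j (\pi_j (a) \xi_j) = \rho_j (\varphi_j (a)) \eta_j
\]
defines an isometry $V_j \colon H_j \to K_j$ with $V_j \xi_j = \eta_j$. It intertwines, $V_j \pi_j (a) = \rho_j (\varphi_j (a)) V_j$. Because $V_j \xi_j = \eta_j$, it maps $H_j^{\circ} = H_j \ominus \C \xi_j$ into $K_j^{\circ}$. Taking tensor products of these restrictions on each alternating summand $H_{i_1}^{\circ} \otimes \cdots \otimes H_{i_n}^{\circ}$, with the vacuum sent to the vacuum, gives an isometry $V \colon H \to K$ between the free product Hilbert spaces.

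Next I check the intertwining relation
\[
V \lambda_j (a) = \mu_j (\varphi_j (a)) V,
\]
where $\lambda_j$ and $\mu_j$ are the free product representations of $A_j$ on $H$ and of $B_j$ on $K$. The free product representation of $A_j$ acts through the identification $H \cong H_j \otimes H(\ell \neq j)$. The map $V$ respects these identifications, being $V_j \otimes V'$ for the corresponding isometry $V'$ on the other summands. The relation therefore follows from the relation for $V_j$. This compatibility check is the only real point, and it is bookkeeping with the standard unitaries defining $\lambda_j$.

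The remaining steps are as follows.

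1. Let $\mathcal{A} = (A_1, \tau_1) \fr (A_2, \tau_2)$. The map $x \mapsto V x V^*$, restricted to the corner $V V^* \mathcal{B}(K) V V^*$, is a $*$-isomorphism from $\mathcal{B}(H)$ onto that corner.

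2. By the intertwining relation, each $V \lambda_j (a) V^*$ equals $\mu_j (\varphi_j (a)) V V^*$. Hence the projection $P = V V^*$ commutes with the C*-algebra $\mathcal{C}$ generated by $\mu_1 (\varphi_1 (A_1))$ and $\mu_2 (\varphi_2 (A_2))$.

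3. The compression $c \mapsto c P$ is a homomorphism on $\mathcal{C}$. This gives a homomorphism $\mathcal{C} \to V \mathcal{A} V^*$, but the direction I want is $\mathcal{A} \to \mathcal{C}$. So I need $c \mapsto c P$ to be injective on $\mathcal{C}$.

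4. The vector $V \xi = \eta$ is cyclic and faithful for $\mathcal{C}$ in the sense that the vector state $\langle \cdot \, \eta, \eta \rangle$ restricted to $\mathcal{C}$ is the free product state. It is also faithful on the reduced free product generated inside $(B_1 \fr B_2)$. The state is faithful on $\mathcal{C}$ because $\mathcal{C}$ is isomorphic to the reduced free product of the images with restricted states, which is standard.

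5. To avoid relying on faithfulness, I instead use the following cleaner fact. $\mathcal{C}$ equals the reduced free product of $(\varphi_j (A_j), \sigma_j|)$, since the subspace $\overline{\mathcal{C} \eta}$ is exactly $V H$ and is reducing. The isomorphism $\mathcal{A} \cong \mathcal{C}$ then comes from the unitary $V \colon H \to \overline{\mathcal{C} \eta}$. Injectivity of compression holds because reduced free products of subalgebras embed, using the conditional-expectation characterization, Avitzour.

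Uniqueness is immediate, since $\mathcal{A}$ is generated by the images of $A_1$ and $A_2$.

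The main obstacle is step 5: showing that compression to $V H$ is isometric on $\mathcal{C}$. I would handle it by using the free product state on $\mathcal{C}$ and the universal characterization of the reduced free product. A unital C*-algebra generated by two subalgebras that are free with respect to a state with faithful GNS representation is canonically the reduced free product. So $\mathcal{C}$, being the image of the GNS representation of $\mathcal{C}$ on $\overline{\mathcal{C} \eta}$, identifies with $\mathcal{A}$ via $V$.
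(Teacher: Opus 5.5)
\textbf{There is a genuine gap at the injectivity step.}

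Your setup is correct. $V$ is an isometry with $V \xi = \eta$ and $V \lambda_j (a) = \mu_j (\varphi_j (a)) V$, so $P = V V^*$ commutes with $\mathcal{C}$. Hence $\Psi (c) = V^* c V$ is a homomorphism from $\mathcal{C}$ onto $\mathcal{A}$ with $\Psi (\mu_j (\varphi_j (a))) = \lambda_j (a)$. You also see correctly that the desired map exists exactly when $\Psi$ (equivalently $c \mapsto c P$) is injective on $\mathcal{C}$. But your justification of that injectivity is circular:
\begin{itemize}
\item The GNS representation of $\mathcal{C}$ for the vector state $\langle \cdot \, \eta, \eta \rangle$ \emph{is} the compression to $\overline{\mathcal{C} \eta} = V H$. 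So ``this state has faithful GNS representation on $\mathcal{C}$'' and ``$\mathcal{C}$ is the reduced free product of the $(\varphi_j (A_j), \sigma_j|)$'' only restate the claim.
\item The unitary $V \colon H \to V H$ identifies $\mathcal{A}$ with $\mathcal{C} P$, not with $\mathcal{C}$.
\item ``Reduced free products of subalgebras embed'' is the lemma itself for injective $\varphi_j$. It is a real theorem (Blanchard and Dykema), and it does not follow from the characterization you quote.
\end{itemize}

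Input beyond $\sigma_j \circ \varphi_j = \tau_j$ is unavoidable, because for general states the statement fails. Take $B_1 = M_2$ with $\sigma_1$ the vector state at $e_1$, $A_1 = \C^2$ included diagonally, $\tau_1 = \sigma_1 \circ \varphi_1$, and $A_2 = B_2 = \C$. The GNS representation of $\tau_1$ kills $(0, 1)$, so $(A_1, \tau_1) \fr (A_2, \tau_2) = \C$. Meanwhile $\mu_1 (\varphi_1 (0, 1)) = e_{22} \neq 0$ in $(B_1, \sigma_1) \fr (B_2, \sigma_2) = M_2$, so no compatible homomorphism exists.

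In the settings where the paper uses the lemma, the fix is short. It is the idea you mention in step~4 and then discard: use faithfulness of the free product state on the \emph{ambient} algebra $\mathcal{B} = (B_1, \sigma_1) \fr (B_2, \sigma_2)$, not on $\mathcal{C}$. For tracial $\sigma_j$, the free product state $\sigma$ is a trace with cyclic vector $\eta$, hence faithful on $\mathcal{B}$. Indeed, if $x \eta = 0$, then for all $y \in \mathcal{B}$,
\[
\| x y \eta \|^2 = \sigma (y^* x^* x y) \leq \| y \|^2 \sigma (x x^*) = \| y \|^2 \sigma (x^* x) = 0 .
\]
Now if $c \in \mathcal{C}$ and $c P = 0$, then $c \eta = c V \xi = 0$, so $c = 0$. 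Thus $\Psi^{-1}$, followed by the inclusion $\mathcal{C} \subseteq \mathcal{B}$, is the required homomorphism.

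For state-preserving automorphisms, as in Corollary~\ref{lem_isomorphism_different_systems}, $V$ is unitary and there is nothing to prove. For faithful nontracial $\sigma_j$, you can substitute Dykema's theorem that free products of faithful states are faithful on the reduced free product.
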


The following theorem is the basic setup we need
for the Elliott intertwining argument.
It differs from most versions in the literature
in that we do not assume, for example,
that $\af_{n, \, n - 1} (\Xi_{n - 1}) \subseteq \Xi_n$;
instead, we only require approximate containment.
In the proof of \cref{thm_main},
this allows us to take our finite subsets to be in the algebraic
free product.
The statement and proof are well known,
but we haven't found it stated this way in the literature,
so we provide full details for the reader's convenience.

\begin{Thm}\label{thm_Elliott_intertwining}
Let $\bigl( ( A_n)_{n \in \N}, ( \alpha_{m, n})_{m \geq n} \bigr)$
and $\bigl( ( B_n)_{n \in \N}, ( \beta_{m, n})_{m \geq n} \bigr)$
be two direct systems of separable C*-algebras.
Write
\[
A = \dirlim A_n \andeqn B = \dirlim B_n.
\]
Let $\alpha_{\infty, n} \colon A_n \to A$
and $\beta_{\infty, n} \colon B_n \to B$
be the canonical homomorphisms to the direct limits
(\cref{Cv_5Z19_DLim}).
Suppose that for each $n \in \N$ we are given
\hm{s} $\mu_n \colon A_n \to B_n$ and $\nu_n \colon B_n \to A_{n + 1}$,
finite sets $\Xi_n \subset A_n$ and $\Sm_n \subset B_n$, and $\ep_n > 0$.
Assume:
\begin{enumerate}
\item\label{5X23_Summ}
$\sum_{n = 1}^{\I} \ep_n < \I$ and the sequence $(\ep_n)_{n \in \N}$ is 
decreasing.
\setcounter{TmpEnumi}{\value{enumi}}
\end{enumerate}
For every $n \in \N$, assume (taking $\Xi_0 = \E$ and $\Sm_0 = \E$):
\begin{enumerate}
\setcounter{enumi}{\value{TmpEnumi}}
\item\label{5X23_Xin}
For every $a \in A_n$, with the limit taken over $k \geq n$,
\[
\lim_{k \to \infty}
 \min \bigl( \{ \| x - \alpha_{k, n} (a) \| \mid x \in \Xi_k \} \bigr) = 0.
\]
\item\label{5X23_Smn}
For every $b \in B_n$, with the limit taken over $k \geq n$,
\[
\lim_{k \to \infty}
 \min \bigl( \{ \| y - \bt_{k, n} (b) \| \mid y \in \Sm_k \} \bigr) = 0.
\]
\item\label{5X23_phnA}
For every $x \in \Xi_{n - 1}$ there is ${\widetilde{x}} \in \Xi_{n}$
such that
$\bigl\| \af_{n, \, n - 1} (x) - {\widetilde{x}} \bigr\| < \ep_n$.
\item\label{5X23_phn_2}
For every $x \in \Xi_n$ there is $y \in \Sm_n$ such that
$\| \mu_n (x) - y \| < \ep_n$.
\item\label{5X23_psinB}
For every $y \in \Sm_{n - 1}$ there is ${\widetilde{y}} \in \Sm_{n}$
such that $\bigl\| \bt_{n, \, n - 1} (y) - {\widetilde{y}} \bigr\| < \ep_n$.
\item\label{5X23_psin_2}
For every $y \in \Sm_{n - 1}$ there is $x \in \Xi_{n}$ such that
$\| \nu_{n-1} (y) - x \| < \ep_n$.
\item\label{5X23_Acomm}
For every $x \in \Xi_n$,
we have $\| (\nu_n \circ \mu_n) (x) - \alpha_{n + 1, \, n} (x) \| < \eps_n$.
\item\label{5X23_SmD}
For every $y \in \Sm_n$,
we have
$\| (\mu_{n + 1} \circ \nu_n) (y) - \beta_{n + 1, \, n} (y) \| < \eps_n$.
\setcounter{TmpEnumi}{\value{enumi}}
\end{enumerate}
Then there is a unique \hm{} $\mu \colon A \to B$
such that for all $n \in \N$ and for all $x \in \Xi_n$
we have
\[
\mu (\af_{\I, n} (x))
 = \lim_{k \to \infty}
   (\beta_{ \infty, k} \circ \mu_k \circ \alpha_{k, n} ) (x),
\]
with the limit taken over $k \geq n$.
Moreover, $\mu$ is an isomorphism.
\end{Thm}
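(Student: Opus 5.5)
The plan is to build $\mu$ as a limit of maps defined on the dense subsets $\af_{\I, n} (\Xi_n)$, check that it is a \hm{} on $\bigcup_n \af_{\I, n} (A_n)$, then extend by continuity. Symmetrically, $\nu$ is built from the $\nu_n$, and the two are shown to be mutually inverse.

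\emph{Step 1: Cauchy estimates.}
Fix $n$ and $x \in \Xi_n$. I will define a sequence $x_n = x$, $x_{k} \in \Xi_k$ by choosing $x_{k+1} = \widetilde{x_k}$ as in (\ref{5X23_phnA}), so that $\| \af_{k+1, k} (x_k) - x_{k+1} \| < \ep_{k+1}$. Since homomorphisms are contractive, $\| \af_{k, n} (x) - x_k \| < \sum_{j = n+1}^{k} \ep_j$.

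Next, compare $\bt_{k+1, k} (\mu_k (x_k))$ with $\mu_{k+1} (x_{k+1})$. Using (\ref{5X23_phn_2}), pick $y_k \in \Sm_k$ with $\| \mu_k (x_k) - y_k \| < \ep_k$. By (\ref{5X23_SmD}),
\[
\bt_{k+1,k} (y_k) \approx_{\ep_k} \mu_{k+1} (\nu_k (y_k)) \approx_{\ep_k} \mu_{k+1} (\nu_k (\mu_k (x_k))),
\]
and by (\ref{5X23_Acomm}) the last term is within $\ep_k$ of $\mu_{k+1} (\af_{k+1, k} (x_k))$. That in turn is within $\ep_{k+1}$ of $\mu_{k+1} (x_{k+1})$. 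Altogether,
\[
\| \bt_{k+1, k} (\mu_k (x_k)) - \mu_{k+1} (x_{k+1}) \| \le 4 \ep_k .
\]
Applying $\bt_{\I, k+1}$ and summing shows that $(\bt_{\I, k} \circ \mu_k) (x_k)$ is Cauchy.

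Also $\| \mu_k (\af_{k,n} (x)) - \mu_k (x_k) \| \le \sum_{j > n} \ep_j$, and this tail is what requires care. To deal with it, I will apply the same estimate starting at a later index $m$ with $x$ replaced by $\af_{m, n} (x)$; this is legitimate because the chain argument only uses contractivity. Thus for $m \le k$,
\[
\| \bt_{\I, k} \mu_k \af_{k, n} (x) - \bt_{\I, m} \mu_m \af_{m, n} (x) \| \le C \sum_{j \geq m} \ep_j .
\]
The point is that the one-step estimate holds for any element within $\dt$ of $\Xi_m$, with error $O (\dt + \ep_m)$. Hence the limit in the statement exists for $x \in \Xi_n$, and more generally for any $a \in A_n$: by (\ref{5X23_Xin}) the elements $\af_{k, n} (a)$ come arbitrarily close to $\Xi_k$, and a $3\ep$ argument using contractivity handles the rest.

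\emph{Step 2: the homomorphism.}
The limit $\mu (\af_{\I, n} (a)) = \lim_k \bt_{\I, k} \mu_k \af_{k, n} (a)$ is well defined, since compatible representatives give the same tail sequence. It is a pointwise limit of homomorphisms, hence a $*$-homomorphism on $\bigcup_n \af_{\I, n} (A_n)$, and it is contractive. It therefore extends uniquely to $A$. Uniqueness of $\mu$ follows because the $\af_{\I, n} (\Xi_n)$ have dense union by (\ref{5X23_Xin}).

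In the same way, using (\ref{5X23_psinB}), (\ref{5X23_psin_2}), (\ref{5X23_Acomm}) and (\ref{5X23_SmD}), I construct $\nu \colon B \to A$ with $\nu (\bt_{\I, n} (b)) = \lim_k \af_{\I, k+1} \nu_k \bt_{k, n} (b)$.

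\emph{Step 3: inverses.}
For $x \in \Xi_n$, (\ref{5X23_Acomm}) and the near-compatibility give
\[
\nu (\mu (\af_{\I, n} (x))) = \lim_k \af_{\I, k+1} \nu_k \mu_k \af_{k, n} (x) = \af_{\I, n} (x),
\]
with the error bounded by a tail sum of the $\ep_j$. Here I use that $\nu$ is continuous and the approximants of $\mu (\cdot)$ are $\bt_{\I, k}$ of something. This reduces the claim to the estimate $\| \nu (\bt_{\I, k} (z)) - \af_{\I, k+1} \nu_k (z) \| \le C \sum_{j \ge k} \ep_j$ for $z$ near $\Sm_k$, which is Step 1 for $\nu$. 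Symmetrically $\mu \circ \nu = \id$, so $\mu$ is an isomorphism.

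The main obstacle is the bookkeeping in Step 1. The hypotheses only give approximate containment, so one must consistently track elements within $\dt$ of the finite sets and show that the errors are controlled by tails $\sum_{j \ge m} \ep_j$. The decreasing condition in (\ref{5X23_Summ}) is what lets $\ep_k$ and $\ep_{k+1}$ be bounded uniformly by the same tail.
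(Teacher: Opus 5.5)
Your architecture matches the paper's: chains in the $\Xi_k$, a pointwise limit on $\bigcup_n \alpha_{\infty,n}(A_n)$ extended by continuity, the symmetric map $\nu$, then $\nu\circ\mu=\mathrm{id}$ and $\mu\circ\nu=\mathrm{id}$. However, the convergence step in Step~1 fails as written.

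For $x\in\Xi_n$, the chain from (\ref{5X23_phnA}) only gives $\|\alpha_{m,n}(x)-x_m\|<\sum_{j=n+1}^{m}\varepsilon_j$, and this does not tend to $0$ as $m\to\infty$. You spot this tail, but restarting at $m$ with $\alpha_{m,n}(x)$ is not ``legitimate by contractivity.''
\begin{itemize}
\item The chain needs its starting point in $\Xi_m$ in order to invoke (\ref{5X23_phn_2}) and (\ref{5X23_Acomm}).
\item Your own $\delta$-remark gives an error $2\,\mathrm{dist}(\alpha_{m,n}(x),\Xi_m)+C\sum_{j\ge m}\varepsilon_j$, and the first term is not a tail.
\end{itemize}
So the displayed bound $C\sum_{j\geq m}\varepsilon_j$ does not follow. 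In fact (\ref{5X23_Summ}) and (\ref{5X23_phnA})--(\ref{5X23_SmD}) alone do not force convergence even on $\Xi_n$. Take:
\begin{itemize}
\item $A_n=B_n=\mathbb{C}^2$ with identity connecting maps and $\varepsilon_j=3^{-j}$;
\item $\mu_n=\nu_n$ the identity for odd $n$ and the coordinate flip for even $n$;
\item $\Xi_n=\{(0,3^{-n}/4)\}$ and $\Sigma_n=\mu_n(\Xi_n)$.
\end{itemize}
All those hypotheses hold, yet with $x=(0,1/12)\in\Xi_1$ the elements $\mu_k\bigl(\alpha_{k,1}(x)\bigr)$ alternate between $(0,1/12)$ and $(1/12,0)$. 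Of course (\ref{5X23_Xin}) fails in this example. (Minor: your one-step bound should be $4\varepsilon_k+\varepsilon_{k+1}$, since you dropped the term $\|\beta_{k+1,k}(\mu_k(x_k)-y_k)\|<\varepsilon_k$. This is harmless.)

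What is missing is that (\ref{5X23_Xin}) must be applied to $x\in\Xi_n$ itself, not only to pass from $\Xi_n$ to $A_n$. The paper's argument runs as follows. Given $\varepsilon>0$, choose $n_0$ with $\sum_{k\ge n_0}\varepsilon_k<\varepsilon/10$, and use (\ref{5X23_Xin}) to get $x_0\in\Xi_{n_0}$ with $\|\alpha_{n_0,n}(x)-x_0\|<\varepsilon/10$. The chain started at $x_0$ then stays within $\varepsilon/5$ of $\alpha_{l,n}(x)$ for all $l\ge n_0$. The telescoped one-step estimate, the paper's (\ref{I_5Y25_Gap_A_B}), then gives the Cauchy property. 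Your ``$3\varepsilon$ argument'' is correct only if you run it this way, from a late-stage approximant, rather than as a reduction to the unproved case $x\in\Xi_n$.

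Step~3 has the same defect. For $z=\mu_k(\alpha_{k,n}(x))$, ``near $\Sigma_k$'' again only means within $\sum_{j=n+1}^{k}\varepsilon_j+\varepsilon_k$. So for $x\in\Xi_n$, chain bookkeeping yields at best $\|\nu(\mu(\alpha_{\infty,n}(x)))-\alpha_{\infty,n}(x)\|\le C\sum_{j\ge n}\varepsilon_j$, not equality. To conclude, you need what the paper uses: $\bigcup_{m\ge n_0}\alpha_{\infty,m}(\Xi_m)$ is dense for every $n_0$, which follows from (\ref{5X23_Xin}). Then you approximate by some $x_1\in\Xi_{n_1}$ with $n_1\ge n_0$, so the relevant tail $\sum_{k>n_0}\varepsilon_k$ is small, and run the estimate from $x_1$.
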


The diagram is as follows:
\[
\begin{xy}
(0,25)*+{A_1 }="top_1";
(30,25)*+{ A_2  }="top_2";
(60,25)*+{ A_3 }="top_3";
(90,25)*+{ A_4 }="top_4";
(120,25)*+{ \cdots }="top_5";
(0,0)*+{ B_1 }="bottom_1";
(30,0)*+{ B_2 }="bottom_2";
(60,0)*+{  B_3 }="bottom_3";
(90,0)*+{ B_4 }="bottom_4";
(120,0)*+{ \cdots }="bottom_5";
{\ar^-{\alpha_{2, 1}} "top_1";"top_2"};
{\ar^-{\alpha_{3, 2}} "top_2";"top_3"};
{\ar_-{\beta_{2, 1}} "bottom_1";"bottom_2"};
{\ar_-{\beta_{3, 2}} "bottom_2";"bottom_3"};
{\ar^-{\alpha_{4, 3}} "top_3";"top_4"};
{\ar_-{\beta_{4, 3}} "bottom_3";"bottom_4"};
{\ar^-{\alpha_{5, 4}} "top_4";"top_5"};
{\ar_-{\beta_{5, 4}} "bottom_4";"bottom_5"};
{\ar^-{\nu_1} "bottom_1";"top_2"};
{\ar^-{\nu_2} "bottom_2";"top_3"};
{\ar^-{\nu_3 } "bottom_3";"top_4"};
{\ar^-{\nu_4 } "bottom_4";"top_5"};
{\ar^-{\mu_1} "top_1";"bottom_1"};
{\ar^-{\mu_2} "top_2";"bottom_2"};
{\ar^-{\mu_3} "top_3";"bottom_3"};
{\ar^-{\mu_4} "top_4";"bottom_4"};
(0,30)*+{\rotatebox{270}{$\subset$}};
(30,30)*+{\rotatebox{270}{$\subset$}};
(60,30)*+{\rotatebox{270}{$\subset$}};
(90,30)*+{\rotatebox{270}{$\subset$}};
(0,35)*+{\Xi_1};
(30,35)*+{\Xi_2};
(60,35)*+{\Xi_3};
(90,35)*+{\Xi_4};
(0,-5)*+{\rotatebox{90}{$\subset$}};
(30,-5)*+{\rotatebox{90}{$\subset$}};
(60,-5)*+{\rotatebox{90}{$\subset$}};
(90,-5)*+{\rotatebox{90}{$\subset$}};
(0,-10)*+{\Sigma_1};
(30,-10)*+{\Sigma_2};
(60,-10)*+{\Sigma_3};
(90,-10)*+{\Sigma_4};
\end{xy}.
\]

\begin{proof}[Proof of \cref{thm_Elliott_intertwining}]
We begin by deriving the following consequences of the estimates
in the hypotheses:
\begin{enumerate}
\setcounter{enumi}{\value{TmpEnumi}}
\item\label{I_5Y25_mnXi}
For every $m, n \in \N$ with $n > m$, and for every $x \in \Xi_m$,
there is ${\widetilde{x}} \in \Xi_n$ such that
$\bigl\| \af_{n, m} (x) - {\widetilde{x}} \bigr\|
 < \sum_{k = m + 1}^{n} \ep_k$.
\item\label{I_5Y25_mnSm}
For every $m, n \in \N$ with $n > m$, and for every $y \in \Sm_m$,
there is ${\widetilde{y}} \in \Sm_n$ such that
$\bigl\| \bt_{n, m} (y) - {\widetilde{y}} \bigr\|
 < \sum_{k = m + 1}^{n} \ep_k$.
\item\label{I_5Y25_pabp}
For every $n \in \N$ and $x \in \Xi_n$, we have
\[
\bigl\| (\mu_{n + 1} \circ \af_{n + 1, \, n} ) (x)
  - (\bt_{n + 1, \, n} \circ \mu_n) (x) \bigr\| < 4 \ep_n.
\]
\item\label{I_5Y25_Gap_A_B}
For every $m, n \in \N$ with $n \geq m$, and for every $x \in \Xi_m$,
we have
\[
\bigl\| (\mu_{n} \circ \af_{n, m} ) (x)
     - (\bt_{n, m} \circ \mu_m) (x) \bigr\|
  < 6 \sum_{k = m}^{n - 1} \ep_k.
\]
\item\label{I_5Y25_Gap2_A_A}
For every $m, n \in \N$ with $n \geq m$, and for every $x \in \Xi_m$,
we have
\[
\bigl\| (\nu_{n} \circ \bt_{n, m} \circ \mu_m) (x)
     - \af_{n + 1, \, m} (x) \bigr\|
  < 6 \ep_m + 8 \sum_{k = m + 1}^{n} \ep_k.
\]
\item\label{I_5Y25_Gap3_B_A}
For every $m, n \in \N$ with $n \geq m$, and for every $y \in \Sm_m$,
we have
\[
\bigl\| (\nu_{n} \circ \bt_{n, m}) (y)
     - (\af_{n + 1, \, m + 1} \circ \nu_m ) (y) \bigr\|
  < \ep_m + 8 \sum_{k = m + 1}^{n} \ep_k.
\]
\item\label{I_5Y25_Gap4_B_B}
For every $m, n \in \N$ with $n \geq m + 1$, and for every $y \in \Sm_m$,
we have
\[
\bigl\| (\mu_{n} \circ \af_{n, \, m + 1} \circ \nu_m) (y)
     - \bt_{n, m} (y) \bigr\|
  < 8 \sum_{k = m}^{n - 1} \ep_k.
\]
\end{enumerate}
(The estimates (\ref{I_5Y25_Gap2_A_A}), (\ref{I_5Y25_Gap3_B_A}),
and~(\ref{I_5Y25_Gap4_B_B})
can be improved, but the forms given suffice for our purposes
and are easily derived from~(\ref{I_5Y25_Gap_A_B}).)

The statements (\ref{I_5Y25_mnXi}) and~(\ref{I_5Y25_mnSm})
follow from (\ref{5X23_phnA}) and~(\ref{5X23_psinB}) by induction.

For~(\ref{I_5Y25_pabp}), use~(\ref{5X23_phn_2}) to
choose $y \in \Sm_n$ such that $\| \mu_n (x) - y \| < \ep_n$.
Then, using~(\ref{5X23_Acomm}) and~(\ref{5X23_SmD})
at the second step,
\[
\begin{split}
&
\| (\mu_{n + 1} \circ \af_{n + 1, \, n} ) (x)
  - (\bt_{n + 1, \, n} \circ \mu_n) (x) \|
\\
& \hspace*{3em} {\mbox{}}
  \leq \bigl\| \mu_{n + 1}
    \bigl( \af_{n + 1, \, n} (x) - (\nu_{n} \circ \mu_n) (x) \bigr) \bigr\|
\\
& \hspace*{6em} {\mbox{}}
   + 2 \| \mu_n (x) - y \|
   + \| (\mu_{n + 1} \circ \nu_n) (y) - \bt_{n + 1, \, n} (y) \|
\\
& \hspace*{3em} {\mbox{}}
  < \ep_n + 2 \ep_n + \ep_n
  = 4 \ep_n.
\end{split}
\]

We prove~(\ref{I_5Y25_Gap_A_B}).
Set $x_m = x$.
Use~(\ref{5X23_phnA}) to inductively choose $x_k \in \Xi_k$,
for $k = m + 1, \, m + 2, \, \ldots, \, n - 1$,
such that
\begin{equation}\label{Eq_5Y26_kkm}
\| \af_{k, \, k - 1} (x_{k - 1}) - x_k \| < \ep_k.
\end{equation}
By~(\ref{I_5Y25_pabp}), we have
\[
\bigl\| (\mu_{k} \circ \af_{k, \, k - 1} ) (x_{k - 1})
  - (\bt_{k, \, k - 1} \circ \mu_{k - 1}) (x_{k - 1}) \bigr\|
  < 4 \ep_{k - 1}.
\]
Now, using $x = x_m$ at the second step,
and at the third step repeatedly applying (\ref{I_5Y25_pabp})
and~(\ref{Eq_5Y26_kkm}),
\[
\begin{split}
& \bigl\| (\mu_{n} \circ \af_{n, m} ) (x)
     - (\bt_{n, m} \circ \mu_m) (x) \bigr\|
\\
& \hspace*{2em} {\mbox{}}
\leq \sum_{k = m + 1}^{n}
  \bigl\| \bt_{n, k} \bigl( [\mu_{k} \circ \af_{k, \, k - 1} ]
          ( \af_{k - 1, m} (x))
   - [\bt_{k, \, k - 1} \circ \mu_{k-1}] ( \af_{k - 1, m} (x)) \bigr) \bigr\|
\\
& \hspace*{2em} {\mbox{}}
 \leq \| \bt_{n, \, m + 1} \|
     \bigl\| (\mu_{m + 1} \circ \af_{m + 1, \, m} ) (x_m)
  - (\bt_{m + 1, \, m} \circ \mu_m) (x_m) \bigr\|
\\
& \hspace*{4em} {\mbox{}}
  + \sum_{k = m + 1}^{n - 1}
            \Bigl[ 2 \| \af_{k, \, k - 1} (x_{k - 1}) - x_k \|
 \\
& \hspace*{6em} {\mbox{}}
       + \| \bt_{n, \, k + 1} \|
          \bigl\| (\mu_{k + 1} \circ \af_{k + 1, \, k} ) (x_{k})
       - (\bt_{k + 1, \, k} \circ \mu_{k}) (x_{k}) \bigr\|
     \Bigr]
\\
& \hspace*{2em} {\mbox{}}
 < 4 \ep_m + \sum_{k = m + 1}^{n - 1} (2 \ep_{k} + 4 \ep_{k})
 \leq 6 \sum_{k = m}^{n - 1} \ep_k,
\end{split}
\]
as wanted.

Next, we prove~(\ref{I_5Y25_Gap2_A_A}).
Use~(\ref{I_5Y25_mnXi})
to choose ${\widetilde{x}} \in \Xi_n$ such that
$\bigl\| \af_{n, m} (x) - {\widetilde{x}} \bigr\|
 < \sum_{k = m + 1}^{n} \ep_k$.
Then,
using~(\ref{I_5Y25_Gap_A_B}) and~(\ref{5X23_Acomm}) at the second step,
\[
\begin{split}
& \bigl\| (\nu_{n} \circ \bt_{n, m} \circ \mu_m) (x)
     - \af_{n + 1, \, m} (x) \bigr\|
\\
& \hspace*{3em} {\mbox{}}
\leq \| \nu_n \| \bigl\| (\mu_{n} \circ \af_{n, m} ) (x)
     - (\bt_{n, m} \circ \mu_m) (x) \bigr\|
\\
& \hspace*{6em} {\mbox{}}
   + 2 \| \af_{n, m} (x) - {\widetilde{x}} \|
     + \bigl\| (\nu_n \circ \mu_n) ({\widetilde{x}})
           - \alpha_{n + 1, \, n} ({\widetilde{x}}) \bigr\|
\\
& \hspace*{3em} {\mbox{}}
  < 6 \sum_{k = m}^{n - 1} \ep_k + 2 \sum_{k = m + 1}^{n} \ep_k + \ep_n
  \leq 6 \ep_m + 8 \sum_{k = m + 1}^{n} \ep_k,
\end{split}
\]
as required.

Now we prove~(\ref{I_5Y25_Gap4_B_B}).
Use~(\ref{5X23_psin_2}) to choose
$x \in \Xi_{m + 1}$ such that $\| x - \nu_m (y) \| < \ep_{m + 1}$.
Using~(\ref{I_5Y25_Gap_A_B}) and~(\ref{5X23_SmD}) at the second step,
\[
\begin{split}
& \bigl\| (\mu_{n} \circ \af_{n, \, m + 1} \circ \nu_m) (y)
     - \bt_{n, \, m} (y) \bigr\|
\\
& \hspace*{3em} {\mbox{}}
  \leq 2 \| x - \nu_m (y) \|
     + \bigl\| (\mu_{n} \circ \af_{n, \, m + 1} ) (x)
     - (\bt_{n, \, m + 1} \circ \mu_{m + 1}) (x) \bigr\|
\\
& \hspace*{6em} {\mbox{}}
      + \bigl\| \bt_{n, \, m + 1}
         \bigl( \mu_{m + 1} \circ \nu_{m}) (y)
         - \bt_{m + 1, \, m} (y) \bigr) \bigr\|
\\
& \hspace*{3em} {\mbox{}}
  < 2 \ep_{m + 1} + 6 \sum_{k = m + 1}^{n - 1} \ep_k + \ep_m
  \leq 8 \sum_{k = m}^{n - 1} \ep_k.
\end{split}
\]

The estimate~(\ref{I_5Y25_Gap3_B_A})
is derived from~(\ref{I_5Y25_Gap2_A_A})
in the same way that (\ref{I_5Y25_Gap4_B_B})
is derived from~(\ref{I_5Y25_Gap_A_B}).

Now define
\[
\Xi = \bigcup_{n = 1}^{\I} \af_{\I, n} (\Xi_n) \subseteq A
\andeqn
\Sm = \bigcup_{n = 1}^{\I} \bt_{\I, n} (\Sm_n) \subseteq B.
\]
We claim that $\Xi$ is dense in~$A$ and $\Sm$ is dense in~$B$.
We prove the first; the proof of the second is the same,
using~(\ref{5X23_Smn}) instead of~(\ref{5X23_Xin}).
Let $a \in A$ and let $\ep > 0$.
Choose $m \in \N$ and $x \in A_m$ such that
$\| \af_{\I, m} (x) - a \| < \frac{\ep}{2}$.
Use~(\ref{5X23_Xin}) to choose $n \geq m$
and ${\widetilde{x}} \in \Xi_n$ such that
$\| {\widetilde{x}} - \alpha_{n, m} (x) \| < \frac{\ep}{2}$.
Then $\af_{\I, n} ({\widetilde{x}}) \in \Xi$
and $\| \af_{\I, n} ({\widetilde{x}}) - a \| < \ep$.
This proves the claim.

Uniqueness of $\mu$ in the statement
is immediate from density of~$\Xi$.

For $n \in \N$ and $k \geq n$, define
\begin{equation}\label{Eq_5Y26_mut_Dfn}
{\widetilde{\mu}}_n^{(k)}
 = \bt_{\I, k} \circ \mu_k \circ \af_{k, n} \colon A_n \to B.
\end{equation}
Then, whenever $k \geq m \geq n$, we have
\begin{equation}\label{Eq_5Y24_NewSt}
{\widetilde{\mu}}_m^{(k)} \circ \af_{m, n} = {\widetilde{\mu}}_n^{(k)}.
\end{equation}

We claim that if $n \in \N$ and $x \in A_n$, then
$\limi{k} {\widetilde{\mu}}_n^{(k)} (x)$ exists.
We prove the claim by showing that
$\bigl( {\widetilde{\mu}}_n^{(k)} (x) \bigr)_{k \geq n}$
is a Cauchy sequence.
Let $\ep > 0$.
Using~(\ref{5X23_Summ}) and~(\ref{5X23_Xin}),
choose $n_0 \in \N$ so large that
\begin{equation}\label{Eq_5Y26_z1}
\sum_{k = n_0}^{\I} \ep_k < \frac{\ep}{10}
\end{equation}
and such that there is $x_0 \in \Xi_{n_0}$ such that
\begin{equation}\label{Eq_5Y26_Z2}
\| \af_{n_0, n} (x) - x_0 \| < \frac{\ep}{10}.
\end{equation}
Let $l, m \in \N$ satisfy $l, m \geq n_0$.
\Wolog{} $l \leq m$.
It follows from (\ref{Eq_5Y26_z1}) and~(\ref{I_5Y25_mnXi})
that there is $x_1 \in \Xi_l$ such that
$\| x_1 - \af_{l, n_0} (x_0) \| < \frac{\ep}{10}$.
By~(\ref{Eq_5Y26_Z2}), we get
\begin{equation}\label{Eq_5Y26_stsar}
\| x_1 - \af_{l, n} (x) \| < \frac{\ep}{5}.
\end{equation}
Now, using (\ref{Eq_5Y26_stsar}) and~(\ref{I_5Y25_Gap_A_B})
at the second step, and~(\ref{Eq_5Y26_z1}) at the third step,
\[
\begin{split}
\bigl\| {\widetilde{\mu}}_n^{(m)} (x)
  - {\widetilde{\mu}}_n^{(l)} (x) \bigr\|
& \leq \bigl\| (\mu_m \circ \af_{m, l} \circ \af_{l, n} ) (x)
    - (\bt_{m, l} \circ \mu_l \circ \af_{l, n} ) (x) \bigr\|
\\
& \leq 2 \| x_1 - \af_{l, n} (x) \|
  + \bigl\| (\mu_m \circ \af_{m, l}) (x_1)
    - (\bt_{m, l} \circ \mu_l) (x_1) \bigr\|
\\
& < \frac{2 \ep}{5} + 6 \sum_{k = l}^{m - 1} \ep_k
  \leq \frac{2 \ep}{5} + \frac{3 \ep}{5}
  = \ep.
\end{split}
\]
This proves the claim.

We now claim that there is a \ct{} \hm{}
\[
\mu \colon \bigcup_{n = 1}^{\I} \af_{\I, n} (A_n) \to B
\]
such that for all $n \in \N$ and $x \in A_n$, we have
\begin{equation}\label{Eq_5Y25_StSt}
(\mu \circ \af_{\I, n}) (x) = \limi{k} {\widetilde{\mu}}_n^{(k)} (x).
\end{equation}
To prove the claim, first, use~(\ref{Eq_5Y24_NewSt})
to see that~(\ref{Eq_5Y25_StSt}) gives a well defined function
$\mu$ from $\bigcup_{n = 1}^{\I} \af_{\I, n} (A_n)$ to~$B$.
It is then immediate that $\mu$ is \ct{} and a \hm, as claimed.

Extending by continuity gives a \hm{} $\mu \colon A \to B$
satisfying~(\ref{Eq_5Y25_StSt}).
The claim is
then the condition on $\mu$ in the statement of the theorem.

A similar argument shows that
there is also a \hm{} $\nu \colon B \to A$ such that
for all $n \in \N$ and $y \in B_n$, we have
\begin{equation}\label{Eq_5Y26_nu_dfn}
(\nu \circ \bt_{\I, n}) (y)
  = \limi{k} (\af_{\I, \, k + 1} \circ \nu_k \circ \bt_{k, n}) (y).
\end{equation}

We now prove that $\nu \circ \mu = \id_A$.
It suffices to prove that for every
$n \in \N$ and $x \in A_n$, we have
$(\nu \circ \mu \circ \af_{\I, n}) (x) = \af_{\I, n} (x)$.
Let $\ep > 0$; we prove that
\begin{equation}\label{Eq_5Y26_Lep}
\| (\nu \circ \mu \circ \af_{\I, n}) (x) - \af_{\I, n} (x) \| < \ep.
\end{equation}
By~(\ref{5X23_Summ}), there is $n_0 \geq n$ such that
\begin{equation}\label{Eq_6122_Star}
\sum_{k = n_0 + 1}^{\I} \ep_k < \frac{\ep}{12}.
\end{equation}
Since $\Xi$ is dense in~$A$
and the sets $\af_{\I, m} (\Xi_m)$ are finite,
the set $\bigcup_{m = n_0}^{\I} \af_{\I, m} (\Xi_m)$
is also dense in~$A$.
Therefore there are
$n_1 \geq n_0$ and $x_1 \in \Xi_{n_1}$
such that
\begin{equation}\label{Eq_5Y26_NewSt}
\| \af_{\I, n_1} (x_1) - \af_{\I, n} (x) \| < \frac{\ep}{12}.
\end{equation}

Recalling (\ref{Eq_5Y26_mut_Dfn}) and~(\ref{Eq_5Y25_StSt}),
choose $l \geq n_1$ such that
\[
\bigl\| (\mu \circ \af_{\I, n_1}) (x_1)
  - ( \bt_{\I, l} \circ \mu_l \circ \af_{l, n_1}) (x_1) \bigr\|
 < \frac{\ep}{12}.
\]
Using~(\ref{Eq_5Y26_nu_dfn}), choose $m \geq l$ such that,
with $y = (\mu_l \circ \af_{l, n_1}) (x_1) \in B_l$,
we have
\[
\bigl\| (\nu \circ \bt_{\I, l}) (y)
  - (\af_{\I, \, m + 1} \circ \nu_m \circ \bt_{m, l}) (y) \bigr\|
 < \frac{\ep}{12}.
\]
Then
\begin{equation}\label{Eq_5Y26_NwStSt}
\bigl\| (\nu \circ \mu \circ \af_{\I, n_1}) (x_1)
   - (\af_{\I, \, m + 1} \circ \nu_m \circ \bt_{m, l}
           \circ \mu_l \circ \af_{l, n_1}) (x_1) \bigr\|
  < \frac{\ep}{6}.
\end{equation}
By~(\ref{I_5Y25_mnXi}), there is ${\widetilde{x}} \in \Xi_l$ such that
$\| \af_{l, n_1} (x_1) - {\widetilde{x}} \|
 < \sum_{k = n_1 + 1}^{l} \ep_k$.
Now, using this and (\ref{I_5Y25_Gap2_A_A}) at the second step,
and (\ref{Eq_6122_Star}) at the last step,
\[
\begin{split}
& \bigl\| (\af_{\I, \, m + 1} \circ \nu_m \circ \bt_{m, l}
           \circ \mu_l \circ \af_{l, n_1}) (x_1)
        - \af_{\I, n_1} (x_1) \bigr\|
\\
& \hspace*{3em} {\mbox{}}
 \leq 2 \| \af_{l, n_1} (x_1) - {\widetilde{x}} \|
       + \bigl\| \af_{\I, \, m + 1} \bigl( (\nu_m \circ \bt_{m, l}
                \circ \mu_l) ({\widetilde{x}})
           - \af_{m + 1, \, l} ({\widetilde{x}}) \bigr) \bigr\|
\\
& \hspace*{3em} {\mbox{}}
 < 2 \sum_{k = n_1 + 1}^{l} \ep_k
    + 6 \ep_l + 8 \sum_{k = l + 1}^{m} \ep_k
 \leq 8 \sum_{k = n_1 + 1}^{l} \ep_k + 8 \sum_{k = l + 1}^{m} \ep_k
 < \frac{2 \ep}{3}.
\end{split}
\]
Combining this with~(\ref{Eq_5Y26_NewSt}) (twice)
and~(\ref{Eq_5Y26_NwStSt}) gives~(\ref{Eq_5Y26_Lep}).
This completes the proof that $\nu \circ \mu = \id_A$.

A similar argument,
using (\ref{I_5Y25_mnSm}) and~(\ref{I_5Y25_Gap4_B_B})
in place of (\ref{I_5Y25_mnXi}) and~(\ref{I_5Y25_Gap2_A_A}),
shows that $\mu \circ \nu = \id_B$.
Therefore $\mu$ is an isomorphism.
\end{proof}

\begin{Lemma}\label{lem_isomorphism_different_systems_0}
Let $\bigl( ( A_n )_{n \in \N}, ( \alpha_{m, n} )_{m \geq n} \bigr)$
be a direct system of C*-algebras.
Suppose that for $n = 2, 3, \ldots$
we are given an automorphism $\gamma_n \in \Aut(A_n)$.
Suppose furthermore that for every $k, m \in \N$ with $m \geq k > 1$,
there is $\beta_m^{(k)} \in \Aut(A_m)$ such that:
\begin{enumerate}
\item\label{I_5X23_Init}
For $m = 2, 3, \ldots$, we have $\beta_m^{(m)} = \gm_m$.
\item\label{I_5X23_Rec}
For $m = 2, 3, \ldots$ and $k = 2, 3, \ldots, m - 1$, we have
\[
\beta_m^{(k)} \circ \af_{m, \, m - 1}
 = \af_{m, \, m - 1} \circ \beta_{m - 1}^{(k)}.
\]
\end{enumerate}
Then
\[
\dirlim \bigl(  ( A_n )_{n \in \N}, (\alpha_{n + 1, \, n} )_{n \in \N}  \bigr)
 \cong \dirlim \bigl( ( A_n )_{n \in \N},
     (\gamma_{n + 1} \circ \alpha_{n + 1, \, n} )_{n \in \N} \bigr).
\]
\end{Lemma}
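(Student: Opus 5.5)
Construct an exact intertwining consisting of automorphisms, so no approximation is needed (or \cref{thm_Elliott_intertwining} can be applied with all errors zero). Write $\af'_{n+1, n} = \gm_{n+1} \circ \af_{n+1, n}$ for the connecting maps of the second system. Define $\te_1 = \id_{A_1}$ and, for $n \geq 2$,
\[
\te_n = \bt_n^{(n)} \circ \bt_n^{(n-1)} \circ \cdots \circ \bt_n^{(2)} \in \Aut (A_n),
\]
that is, $\te_n = \gm_n \circ \bt_n^{(n-1)} \circ \cdots \circ \bt_n^{(2)}$ by~(\ref{I_5X23_Init}). The goal is to show
\[
\te_{n+1} \circ \af_{n+1, n} = \af'_{n+1, n} \circ \te_n
\]
for all $n$. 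Then $(\te_n)_n$ is an isomorphism of direct systems, with inverse system $(\te_n^{-1})_n$ also commuting exactly. It therefore induces an isomorphism of the direct limits: define $\te$ on $\bigcup_n \af_{\I, n}(A_n)$ by $\te(\af_{\I,n}(x)) = \af'_{\I,n}(\te_n(x))$, check that it is well defined and isometric by the commutation, and extend by continuity. The inverse is built the same way from the $\te_n^{-1}$.

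The key computation applies~(\ref{I_5X23_Rec}) repeatedly, with $m = n+1$ and $k = 2, \ldots, n$:
\[
\bt_{n+1}^{(n)} \circ \cdots \circ \bt_{n+1}^{(2)} \circ \af_{n+1, n}
 = \af_{n+1, n} \circ \bt_n^{(n)} \circ \cdots \circ \bt_n^{(2)}
 = \af_{n+1,n} \circ \te_n .
\]
Composing on the left with $\bt_{n+1}^{(n+1)} = \gm_{n+1}$ gives $\te_{n+1} \circ \af_{n+1,n} = \gm_{n+1} \circ \af_{n+1,n} \circ \te_n$, which is the required identity. For $n = 1$ the product defining $\te_1$ is empty, and the identity reads $\te_2 \circ \af_{2,1} = \gm_2 \circ \af_{2,1}$, which holds since $\te_2 = \gm_2$.

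The only real obstacle is getting the order of composition right. Each $\bt_m^{(k)}$ must commute with the connecting map from level $m-1$, and only the newest factor $\gm_{m}$ is new at level $m$. So the product must be arranged with $\gm_m$ applied last (outermost), and the earlier factors, which propagate via~(\ref{I_5X23_Rec}), applied first. With the ordering chosen as above, each step is an exact equality, and the rest is the standard fact that a commuting ladder of isomorphisms induces an isomorphism of direct limits.
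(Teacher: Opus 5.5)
Your proposal is correct and is essentially the paper's proof. Your vertical maps $\theta_1 = \id_{A_1}$ and $\theta_n = \beta_n^{(n)} \circ \beta_n^{(n-1)} \circ \cdots \circ \beta_n^{(2)}$ are exactly the automorphisms in the paper's commuting diagram. Your check of the ladder identity $\theta_{n+1} \circ \alpha_{n+1,n} = \gamma_{n+1} \circ \alpha_{n+1,n} \circ \theta_n$, via the second hypothesis and then $\beta_{n+1}^{(n+1)} = \gamma_{n+1}$, is the verification the paper leaves implicit, and you also spell out the standard passage from a commuting ladder of automorphisms to an isomorphism of the direct limits.
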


\begin{proof}
The following commutative diagram
gives an isomorphism between these two direct systems:
\[
\begin{xy}
(0,25)*+{A_1 }="top_1";
(30,25)*+{ A_2  }="top_2";
(60,25)*+{ A_3 }="top_3";
(90,25)*+{ A_4 }="top_4";
(120,25)*+{ \cdots }="top_5";
(0,0)*+{ A_1 }="bottom_1";
(30,0)*+{ A_2 }="bottom_2";
(60,0)*+{  A_3 }="bottom_3";
(90,0)*+{ A_4 }="bottom_4";
(120,0)*+{ \cdots. }="bottom_5";
{\ar^-{\alpha_{2, 1}} "top_1";"top_2"};
{\ar^-{\alpha_{3, 2}} "top_2";"top_3"};
{\ar_-{\gamma_{2} \circ \alpha_{2, 1}} "bottom_1";"bottom_2"};
{\ar_-{\gamma_{3} \circ \alpha_{3, 2}} "bottom_2";"bottom_3"};
{\ar^-{\id_{A_1}} "top_1";"bottom_1"};
{\ar^-{\beta_2^{(2)}} "top_2";"bottom_2"};
{\ar^-{\beta_3^{(3)} \circ \beta_3^{(2)} } "top_3";"bottom_3"};
{\ar^-{\beta_4^{(4)} \circ \beta_4^{(3)} \circ \beta_4^{(2)} }
       "top_4";"bottom_4"};
{\ar^-{\alpha_{4, 3}} "top_3";"top_4"};
{\ar_-{\gamma_4 \circ \alpha_{4, 3}} "bottom_3";"bottom_4"};
{\ar^-{\alpha_{5, 4}} "top_4";"top_5"};
{\ar_-{\gamma_5 \circ \alpha_{5, 4}} "bottom_4";"bottom_5"};
\end{xy}
\]
This completes the proof.
\end{proof}

\begin{Cor}\label{lem_isomorphism_different_systems}
Let $(A_1, \tau_1), (A_2, \tau_2), (A_3, \tau_3), \ldots$
be unital C*-algebras with given states.
Let $\gamma_n  \in \Aut \left( \bigastr_{k = 1}^{n} (A_k, \tau_k) \right)$
be automorphisms which fix the canonical state
$\tau_1 \fr \tau_2 \fr \cdots \fr \tau_n$.
Let
\[
\iota_{n} \colon \bigastr_{k = 1}^{n} (A_k, \tau_k)
 \to \bigastr_{k = 1}^{n + 1} (A_k, \tau_k)
\]
be the inclusion in the first $n$ free factors.
Then
\[
\dirlim
 \left( \left( \bigastr_{k = 1}^{n} (A_k, \tau_k) \right)_{n \in \N},
             (\gamma_{n + 1} \circ \iota_n )_{n \in \N} \right)
 \cong \dirlim
     \left( \left( \bigastr_{k = 1}^{n} (A_k, \tau_k) \right)_{n \in \N},
        (\iota_n)_{n \in \N} \right).
\]
\end{Cor}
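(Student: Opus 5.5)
We deduce the corollary from \cref{lem_isomorphism_different_systems_0}, applied to the direct system with $A_n$ replaced by $B_n = \bigastr_{k = 1}^{n} (A_k, \tau_k)$ and connecting maps $\af_{n + 1, \, n} = \iota_n$. The automorphisms are the given $\gm_n \in \Aut (B_n)$ for $n \geq 2$. It remains to construct the automorphisms $\bt_m^{(k)} \in \Aut (B_m)$ for $m \geq k > 1$ satisfying conditions (\ref{I_5X23_Init}) and~(\ref{I_5X23_Rec}) there. The lemma then gives exactly the claimed isomorphism, with the two sides of the lemma's conclusion swapped.

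The natural choice is to extend $\gm_k$ to larger free products by acting trivially on the additional free factors. For $m \geq k$, write
\[
B_m = B_k \fr \bigastr_{j = k + 1}^{m} (A_j, \tau_j),
\]
using the standard associativity of reduced free products with their free product states. Since $\gm_k$ preserves the state $\tau_1 \fr \cdots \fr \tau_k$, \cref{lem_extending_maps_along_free_products} provides a \hm{}
\[
\bt_m^{(k)} = \gm_k \fr \id \colon B_m \to B_m,
\]
where $\id$ is the identity on $\bigastr_{j = k + 1}^{m} (A_j, \tau_j)$. The map $\gm_k^{-1} \fr \id$ is also defined, since $\gm_k^{-1}$ preserves the same state. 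By the uniqueness in \cref{lem_extending_maps_along_free_products}, the two composites are the identity, so $\bt_m^{(k)}$ is an automorphism. When $m = k$ there are no extra factors, and $\bt_k^{(k)} = \gm_k$, which is condition~(\ref{I_5X23_Init}).

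For condition~(\ref{I_5X23_Rec}), take $k < m$ and compare $\bt_m^{(k)} \circ \iota_{m - 1}$ with $\iota_{m - 1} \circ \bt_{m - 1}^{(k)}$. Both are \hm{s} from $B_{m - 1}$ to $B_m$, and $B_{m - 1}$ is generated by $B_k$ together with $A_{k + 1}, \ldots, A_{m - 1}$.
\begin{itemize}
\item On $B_k$, both composites equal $\gm_k$ followed by the inclusion into $B_m$.
\item On each $A_j$ with $k < j \leq m - 1$, both composites are the inclusion into $B_m$.
\end{itemize}
Since they agree on generators, they coincide. This uses only the uniqueness part of \cref{lem_extending_maps_along_free_products} and the compatibility of the canonical inclusions with associativity.

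There is no real obstacle. The only point needing care is the identification $B_m \cong B_k \fr \bigastr_{j > k} A_j$, under which $\iota$ and the canonical embeddings correspond; this is standard from the construction of reduced free products. The resulting isomorphism between the two systems is the diagram in the proof of \cref{lem_isomorphism_different_systems_0}, with vertical maps $\bt_n^{(n)} \circ \cdots \circ \bt_n^{(2)}$.
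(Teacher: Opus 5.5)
Your proposal is correct and is essentially the paper's proof: apply \cref{lem_isomorphism_different_systems_0} with $\beta_m^{(k)}$ the extension of $\gamma_k$ by the identity on the later free factors $A_{k+1}, \ldots, A_m$. The paper builds these inductively as $\beta_{m+1}^{(k)} = \beta_m^{(k)} \fr \id_{A_{m+1}}$, which by associativity and the uniqueness in \cref{lem_extending_maps_along_free_products} is the same map as your $\gamma_k \fr \id$ and makes condition~(\ref{I_5X23_Rec}) immediate; your explicit checks of invertibility and of condition~(\ref{I_5X23_Rec}) on generators fill in details the paper leaves to the reader.
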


\begin{proof}
We use \cref{lem_isomorphism_different_systems_0},
with $\bigastr_{k = 1}^{n} (A_k, \tau_k)$ in place of~$A_n$,
with $\gm_n$ as given, and with $\af_{n + 1, \, n} = \io_n$.
For $m = 2, 3, \ldots$, define $\beta_m^{(m)} = \gm_m$.
Using \cref{lem_extending_maps_along_free_products} to see that
the required free products of \hm{s} exist,
for fixed $k \in \{ 2, 3, \ldots \}$,
for $m \geq k$ inductively define
$\beta_{m + 1}^{(k)} = \beta_{m}^{(k)} \fr \id_{A_{m + 1}}$.
\end{proof}

\begin{Def}\label{def_At}
Let $A$ be a \uca{} and let $\ta$ be a faithful \tst{} on $A$.
For $a\in A_{\mathrm{sa}}$, let $\mu_{\ta,a}$
be the Borel probability
measure on $\spec(a)$ determined by
\[
\ta(f(a))=\int_{\spec(a)} f(t)\,d\mu_{\ta,a}(t)
\]
for all $f \in C (\spec (a))$.
Define
\[
\At(A,\ta)
=
\inf_{a\in A_{\mathrm{sa}}}
\sup_{t\in \spec(a)} \mu_{\ta,a}(\{t\}).
\]
\end{Def}

\begin{Lemma}\label{lemma_Infinite_Free_Product_Selfless}
For $n\in \N$, let $C_n$ be a separable \uca{} equipped with a
faithful \tst{} $\sm_n$. Suppose that
\[
\sum_{n=1}^{\infty}
\bigl(1-\At(C_n,\sm_n)\bigr)=\infty .
\]
Let
\[
(D,\rh)=\bigastr_{n=1}^{\infty} (C_n,\sm_n).
\]
Then $D$ is simple, has a unique tracial state, has stable rank one,
has strict comparison, and admits a unital embedding from the Jiang-Su
algebra $\Zh$.
\end{Lemma}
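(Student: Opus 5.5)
The plan is to reduce everything to known results on selfless/free-product C*-algebras, most naturally via the recent theory of selfless C*-algebras (Robert), together with Dykema's results on simplicity and unique trace for reduced free products.

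\textbf{Step 1: partition into blocks.} Since $\sum_n (1 - \At(C_n, \sm_n)) = \infty$, we may partition $\N$ into consecutive finite blocks $J_1, J_2, \ldots$ such that $\sum_{n \in J_m} (1 - \At(C_n, \sm_n)) \geq 1$ for each $m$. Associativity of reduced free products gives
\[
(D, \rh) \cong \bigastr_{m=1}^{\infty} (E_m, \ta_m), \qquad E_m = \bigastr_{n \in J_m} (C_n, \sm_n).
\]

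\textbf{Step 2: make the blocks diffuse.} We want each $E_m$, or a coarser regrouping, to contain a self-adjoint element whose trace-distribution has no atoms, or at least small atoms. The natural tool is Dykema's computation of atoms in free products: in $(A,\phi)\fr(B,\psi)$, free self-adjoint elements with atom masses $p$, $q$ generate an element, for example a sum or a free product of spectral projections, whose atoms are controlled by $\max(0, p+q-1)$. Iterating along the block, the largest atom decays. Even without full diffuseness, an infinite free product of elements with atom sizes bounded away from $1$ contains, after grouping, a Haar unitary. Thus $D$ contains a free family of Haar unitaries, or semicircular-type elements, up to small perturbation. In fact we only need that $D \cong D \fr D'$ for a tail that contains a diffuse abelian subalgebra freely.

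\textbf{Step 3: invoke known theorems.}
\begin{enumerate}
\item Simplicity and unique trace follow from Avitzour's or Dykema's criterion, since the tail free products have diffuse, hence non-trivial, elements; compare the Corollary on page 431 of \cite{Avitzour}.
\item Rewrite $D \cong D_1 \fr D_2$ with both factors containing Haar unitaries. The infinite free product is selfless in the sense of Robert: the canonical embedding $D \to D \fr D$ can be approximated by infinite tail shifts, and the free product with a Haar unitary gives the required free semicircular. Robert's theorem then yields stable rank one and strict comparison.
\item Strict comparison and stable rank one with unique trace, together with the presence of a diffuse element, give a unital embedding of $\Zh$. This uses Robert's result that $\Zh$ embeds in simple unital algebras with strict comparison and a diffuse positive element, or more concretely \cite[Proposition~6.3.2]{Robert_NCCW} applied to a dimension-drop algebra mapped via Cuntz semigroup data.
\end{enumerate}

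\textbf{Main obstacle.} I expect Step 2 to be the hardest: showing that the atom condition yields genuine diffuse, Haar unitary, elements in tails, uniformly enough to meet the hypotheses of the selflessness theorem. I would first try to prove by induction, using Dykema's atom formula for free products of finite-spectrum self-adjoint elements (obtained from functional calculus approximations of $a \in C_n$), that the maximal atom along a block decreases geometrically. Then I would take a norm limit to get diffuse elements in the infinite tail.
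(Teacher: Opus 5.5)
Your architecture matches the paper's. You group into finite blocks, produce a Haar unitary in each block from the atom formula for free additive convolution, and apply Robert's selflessness results: \cite[Theorem~2.8]{Robert_selfless} to the decomposition $D \cong \bigastr_j D_j$, then \cite[Theorem~3.1]{Robert_selfless} for simplicity, unique trace, stable rank one, strict comparison, and uniqueness of the $2$-quasitracial state. Finally $\Zh \hookrightarrow D$ comes from \cite[Proposition~6.3.1]{Robert_NCCW}, using that $D$ is infinite dimensional.

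The gap is in Step~2, the step you flag as the main obstacle, and your plan for it would not work as stated. Finite-spectrum approximations of $a \in C_n$ by functional calculus are not available in general: for $C_n = C([0,1])$ the only selfadjoint elements with finite spectrum are scalars. The norm-limit step also fails. If $b_m \to b$ in norm, then $\mu_{\ta, b_m} \to \mu_{\ta, b}$ only weakly, and weak limits of atomless measures can have atoms (take $b_m = b/m$ with $b$ diffuse). Moreover, the paper applies Robert's Theorem~2.8 to a decomposition in which \emph{every} free factor $D_j$ has a Haar unitary. A single diffuse element in an infinite tail would not supply that input.

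The missing observation is that no limit is needed: atoms do not decay geometrically, they vanish at a finite stage. By \cite[Theorem~7.4]{Bercovici_Voiculescu_Regularity}, which handles arbitrary compactly supported measures, induction shows that any atom of $\mu_1 \boxplus \cdots \boxplus \mu_r$ has mass $\sum_{k=1}^r \mu_k(\{t_k\}) - (r-1)$ for suitable atoms $t_k$ of $\mu_k$. So an atom exists only if this quantity is positive.

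Since $\At$ is an infimum that need not be attained, your Step~1 (phrased with $\At$ itself) needs adjusting. First choose $a_n \in (C_n)_{\mathrm{sa}}$ whose largest atom $\alpha_n$ (for $\mu_n = \mu_{\sm_n, a_n}$) satisfies $\alpha_n < \At(C_n,\sm_n) + 2^{-n}$; then still $\sum_n (1-\alpha_n) = \infty$. Only after that, partition $\N$ into finite blocks $S_j$ with $\sum_{k \in S_j}(1-\alpha_k) > 1$.

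Now $\sum_{k \in S_j} \mu_k(\{t_k\}) \leq \sum_{k \in S_j} \alpha_k < \card(S_j) - 1$. Hence $b_j = \sum_{k \in S_j} \ld_k(a_k)$, with $\ld_k \colon C_k \to D_j$ the canonical embedding, has atomless distribution with respect to $\rh_j$. So $D_j$ contains a Haar unitary, by \cite[Proposition~4.9]{Thiel_diffuse}, or directly as $e^{2 \pi i F(b_j)}$ with $F$ the continuous distribution function of $\mu_{\rh_j, b_j}$.

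With that in place, your Step~3 goes through. Your Avitzour route to simplicity and unique trace is also fine. The tail-shift heuristic in Step~3(2) should simply be replaced by citing \cite[Theorem~2.8]{Robert_selfless}.
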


The decomposition in~(\ref{Eq_6524_Decomp}) in the proof below
will also be used in the proof
of \cref{C_2622_RanK0}(\ref{I_C_2622_RanK0_RRZ}).

\begin{proof}[Proof of
 Lemma~\ref{lemma_Infinite_Free_Product_Selfless}]
For each $n \in \N$, choose $a_n\in (C_n)_{\mathrm{sa}}$ such that,
writing
\[
\mu_n=\mu_{\sm_n,a_n}
\andeqn
\alpha_n=\sup_{t\in \spec(a_n)}\mu_n(\{t\}),
\]
we have
$\alpha_n < \At(C_n,\sm_n)+2^{-n}$.
Then $\sum_{n=1}^{\infty}(1-\alpha_n)=\infty$.
Thus we may write $\N$ as a disjoint union of finite nonempty sets
$S_1,S_2,S_3,\ldots$, such that for every $j \in \N$ we have
\[
\sum_{k\in S_j}(1-\alpha_k)>1.
\]

For $j \in \N$, set
$(D_j,\rh_j)=\bigastr_{k\in S_j}(C_k,\sm_k)$.
So
\begin{equation}\label{Eq_6524_Decomp}
(D,\rh)\cong \bigastr_{j=1}^{\infty}(D_j,\rh_j).
\end{equation}
We claim that $(D_j,\rh_j)$ admits a Haar unitary for each~$j$.
Fix $j$, and for $k \in S_j$ let
\[
\lambda_k\colon C_k\to D_j
\]
be the canonical embedding.
Set
\[
b_j=\sum_{k\in S_j}\lambda_k(a_k)\in (D_j)_{\mathrm{sa}}.
\]
By free independence,
the spectral distribution of $b_j$ with respect to $\rh_j$ is
given by the free additive convolution
(\cite[Definition 3.1.1]{VoiDyNi})
\[
\mu_{\rh_j,b_j}
=
\boxplus_{k\in S_j}\mu_k .
\]

The atom formula for free additive
convolution, \cite[Theorem~7.4]{Bercovici_Voiculescu_Regularity}, implies
by induction on $\card(S_j)$ that if $\boxplus_{k\in S_j}\mu_k$ has an
atom,
then there are atoms $t_k$ of $\mu_k$, for $k\in S_j$, such that
\[
\sum_{k\in S_j}\mu_k(\{t_k\})>\card(S_j)-1.
\]
But
\[
\sum_{k\in S_j}\mu_k(\{t_k\})
\leq
\sum_{k\in S_j}\alpha_k
=
\card(S_j)-\sum_{k\in S_j}(1-\alpha_k)
<
\card(S_j)-1 .
\]
Therefore $\mu_{\rh_j,b_j}$ is atomless.
Thus the restriction of $\rh_j$ to the unital abelian subalgebra
$C^*(b_j) \subseteq D_j$ is diffuse.
By \cite[Proposition~4.9]{Thiel_diffuse}, $\rh_j$ admits a Haar unitary.

Theorem~2.8 of~\cite{Robert_selfless} and~(\ref{Eq_6524_Decomp})
now imply that $D$ is selfless
in the sense of \cite[Definition~2.8]{Robert_selfless}.
Since $\rh$ has a faithful Gelfand-Naimark-Segal representation,
\cite[Theorem~3.1]{Robert_selfless} implies that
$D$ is simple, $\rh$ is the unique tracial state on $D$, $D$ has
stable rank one and strict comparison, and $\rh$ is the unique
$2$-quasitracial state on $D$.
Since $D$ contains a Haar unitary, $D$ is infinite dimensional,
hence not isomorphic to $K (H)$ for any Hilbert space~$H$.
It follows from \cite[Proposition~6.3.1]{Robert_NCCW} that $\Zh$ embeds
unitally into $D$.
\end{proof}

\begin{rmk}\label{ER_6524_AtConmd}
The theorem quoted in the proof of
\cref{lemma_Infinite_Free_Product_Selfless},
\cite[Theorem~3.1]{Robert_selfless}, adds new information,
as well as packing together some earlier results: simplicity
and existence of a unique \tst{} follow from
\cite[Part (3) of the Corollary on page 431]{Avitzour},
and stable rank one follows from \cite[Theorem
3.8]{Dykema_Haagerup_Rordam}. (The correction
\cite{Dykema_Haagerup_Rordam_correction}
does not affect this result.)
\end{rmk}

The condition
\begin{equation}\label{Eq_6524_InfSum}
\sum_{n = 1}^{\infty} \bigl( 1 - \At (C_n, \sm_n) \bigr) = \infty
\end{equation}
from the hypotheses of
\cref{lemma_Infinite_Free_Product_Selfless} is very mild.

\begin{exa}\label{Ex_6524_CplusC}
Let $\lambda_1, \lambda_2, \ldots \in (0, \frac{1}{2}]$
and take $C_n = \C \oplus \C$,
with $\sigma_n (a, b) = \lambda_n a + (1-\lambda_n) b$.
Then (\ref{Eq_6524_InfSum})
is $\sum_{n=1}^{\infty} \lambda_n = \infty$.
\end{exa}

\begin{exa}\label{Ex_6524_Mn}
If $C_n = M_{r (n)}$ with its
unique trace $\tau_n$,
then $\At (C_n, \tau_n) = 1 / r (n)$.
Therefore (\ref{Eq_6524_InfSum}) holds as long as $r (n) > 1$
for infinitely many~$n$.
\end{exa}

\begin{exa}\label{Ex_6524_Const}
If $C$ is a simple unital and infinite dimensional \ca{} with a
\tst~$\sigma$, then $C$ has a Haar unitary $u$ by
\cite[Corollary 5.6]{Thiel_diffuse}.
Then $a = u + u^*$ is selfadjoint
and its spectral measure with respect to $\sm$ has no atoms,
so $\At(C, \sigma) = 0$.
Thus, if all the factors are simple, unital, and infinite dimensional,
then (\ref{Eq_6524_InfSum}) holds.
\end{exa}

\begin{Lemma}\label{L_2622_KthGen}
Let $J$ be a finite or countable index set.
Let $( (A_j, \rh_j))_{j \in J}$ be a family of separable unital \ca{s}
$A_j$ equipped with faithful states~$\rh_j$.
Let $A = \bigastr_{j \in J} (A_j, \rh_j)$ be the reduced free product
of the algebras $A_j$ with respect to the states~$\rh_j$,
amalgamated over~$\C$.
For $j \in J$ let $\io_j \colon A_j \to A$ be the canonical inclusion.
Then $K_0 (A)$ is generated by $\bigcup_{j \in J} (\io_j)_* (K_0 (A_j))$.
If $A_j$ is stably finite for all $j \in J$,
then $K_1 (A)$ is generated by $\bigcup_{j \in J} (\io_j)_* (K_1 (A_j))$.
\end{Lemma}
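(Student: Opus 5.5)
The plan is to reduce to the case of two free factors and then pass to the general case by induction and continuity of K-theory. The introduction already points to the key inputs: Thomsen's computation of K-theory for reduced free products \cite{Thn2}, and Germain's/Hasegawa's results \cite{Hsgw} that the full and reduced free products have the same K-theory (K-equivalence of the canonical quotient map). Concretely, for two factors $(A_1, \rh_1)$ and $(A_2, \rh_2)$ with faithful states, the reduced free product $A_1 \fr A_2$ is KK-equivalent to the full free product $A_1 * A_2$ amalgamated over $\C$, via the canonical surjection. For the full free product amalgamated over $\C$ there is the Cuntz/Germain six term exact sequence
\[
K_0 (\C) \to K_0 (A_1) \oplus K_0 (A_2) \to K_0 (A_1 * A_2) \to K_1 (\C) = 0 ,
\]
together with the corresponding sequence for $K_1$, namely $0 = K_1 (\C) \to K_1 (A_1) \oplus K_1 (A_2) \to K_1 (A_1 * A_2) \to K_0 (\C) \to K_0 (A_1) \oplus K_0 (A_2)$.

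From the $K_0$ part of this sequence, $K_0 (A_1 \fr A_2)$ is the quotient of $K_0 (A_1) \oplus K_0 (A_2)$ by the image of $K_0 (\C)$, and in particular it is generated by the images of the $K_0 (A_j)$. For $K_1$, the obstruction to generation by the $K_1 (A_j)$ is the kernel of the map $K_0 (\C) = \Z \to K_0 (A_1) \oplus K_0 (A_2)$, which sends $1$ to $([1_{A_1}], -[1_{A_2}])$. If $A_1$ is stably finite, then $[1_{A_1}]$ has infinite order (indeed $n [1] = 0$ would give a projection equivalent to a proper subprojection after stabilization), so this map is injective. The connecting map into $K_0 (\C)$ is then zero, and $K_1 (A_1 \fr A_2)$ is generated by the images of the $K_1 (A_j)$. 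This is exactly where the stable finiteness hypothesis enters, and it only needs one factor to be stably finite.

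Next comes induction for finite $J$. Write $\bigastr_{j \in F} A_j \cong \bigl( \bigastr_{j \in F \setminus \{ j_0 \}} A_j \bigr) \fr A_{j_0}$, where the state on the first factor is the free product state, which is faithful by Dykema's faithfulness result for reduced free products of faithful states. The two-factor case then shows generation by the images of the factors, and since the canonical inclusions compose correctly, this completes the finite case.

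For countable $J$, the infinite reduced free product is the direct limit of the finite ones along the canonical inclusions. Continuity of $K$-theory gives that $K_* (A)$ is the union of the images of the $K_*$ of the finite subproducts, so generation follows from the finite case.

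The main obstacle is correctly invoking the K-equivalence between the reduced and full free products under the stated hypotheses (faithful states, separability), and pinning down the exact form of the exact sequence. For $K_1$ in particular, I need to verify that the boundary term involves exactly $K_0 (\C) \to K_0 (A_1) \oplus K_0 (A_2)$, so that stable finiteness of one factor suffices. I would cite \cite{Thn2} for the exact sequence and \cite{Hsgw} for the reduced-versus-full comparison.
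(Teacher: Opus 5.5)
Your proposal is correct and uses the same ingredients as the paper: the case $B = \C$ of Thomsen's exact sequence, injectivity of $K_0 (\C) \to K_0 (A_1) \oplus K_0 (A_2)$ when one factor is stably finite, induction over finite $J$, direct limits, and Hasegawa's KK-equivalence between the full and reduced free products. The only difference is the order of steps. The paper does the two-factor case, the induction, and the direct limit entirely for full free products and passes to the reduced free product once at the end, whereas you apply the full-to-reduced comparison at each two-factor step. This costs you a harmless extra check that the iterated reduced free product state is admissible; its GNS representation is automatically faithful, so Dykema's faithfulness theorem is more than you need. In exchange, you only ever use Hasegawa's theorem for two factors.
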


\begin{proof}
For the full free product in place of~$A$,
and if $J = \{ 1, 2 \}$,
the statement for $K_0 (A)$
is immediate from the case $B = \C$ of \cite[Theorem 6.4]{Thn2}.
When at least one of $A_1$ and $A_2$ is stably finite,
then the map $K_0 (\C) \to K_0 (A_1) \oplus K_0 (A_2)$ in
\cite[Theorem 6.4]{Thn2} is injective,
so the statement for $K_1 (A)$ also follows.
For finite $J$, $K_0 (A)$, and full free products, use induction.
Induction also gives the statement for $K_1 (A)$,
since in the induction step one of the algebras is stably finite.
For infinite~$J$, take direct limits.
The result for the reduced free product now follows from
the case $B = \C$ of \cite[Theorem 1.1]{Hsgw}.
\end{proof}

\begin{Cor}\label{C_2622_RanK0}
For each $n \in \N$, let $C_n \not\cong \C$ be a separable \uca{}
with a faithful \tst{} $\sm_n$.
Suppose
$\sum_{n=1}^{\infty} \bigl(1-\At(C_n,\sm_n)\bigr)=\infty$.
Set $D=\bigastr_{n=1}^{\infty}(C_n,\sm_n)$,
and let $\ta$ be the unique \tst{} on $D$
(\cref{lemma_Infinite_Free_Product_Selfless}).
Then:
\begin{enumerate}
\item\label{I_C_2622_RanK0_Ran}
$\ta_* (K_0 (D))$ is the subgroup of $\R$ generated by
$\bigcup_{n = 1}^{\infty} (\sm_n)_* (K_0 (C_n))$.
\item\label{I_C_2622_RanK0_RRZ}
The algebra $D$ has real rank zero if and only if
$\bigcup_{n = 1}^{\infty} (\sm_n)_* (K_0 (C_n))$
generates a dense subgroup of~$\R$.
\end{enumerate}
\end{Cor}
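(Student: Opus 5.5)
For part~(\ref{I_C_2622_RanK0_Ran}), I will use \cref{L_2622_KthGen} with $J = \N$ and $A_n = C_n$.
It says that $K_0 (D)$ is generated by the union of the images $(\io_n)_* (K_0 (C_n))$, where $\io_n \colon C_n \to D$ is the canonical inclusion.
Since $\ta$ is the unique \tst{} on $D$ (\cref{lemma_Infinite_Free_Product_Selfless}), it must equal the free product state $\rh$.
To see this, note that $\rh$ is tracial because each $\sm_n$ is tracial, by Avitzour.
The free product state restricts to $\sm_n$ on $\io_n (C_n)$, so $\ta \circ \io_n = \sm_n$ and hence $\ta_* \circ (\io_n)_* = (\sm_n)_*$ on $K_0 (C_n)$.
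The map $\ta_*$ is a group homomorphism, so it sends a generating set of $K_0 (D)$ to a generating set of $\ta_* (K_0 (D))$.
This gives~(\ref{I_C_2622_RanK0_Ran}).

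For part~(\ref{I_C_2622_RanK0_RRZ}), the plan is to use the structure of $D$ from \cref{lemma_Infinite_Free_Product_Selfless}: $D$ is simple, has stable rank one and strict comparison, and has a unique tracial state.
For such an algebra, real rank zero should be equivalent to density of $\ta_* (K_0 (D))$ in~$\R$.
The density of the trace image can be checked directly from part~(\ref{I_C_2622_RanK0_Ran}), so the work is in the equivalence itself.

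First I treat the forward direction.
Suppose $D$ has real rank zero.
Since $D$ is simple and infinite dimensional, it contains a Haar unitary, so there is a positive element $a$ whose spectral measure under $\ta$ is diffuse.
For each $\ep > 0$, real rank zero of the hereditary subalgebras gives nonzero projections $p$ with $0 < \ta (p) < \ep$.
Concretely, I take projections inside $\overline{f (a) D f (a)}$ for a function $f$ supported on a short interval.
A subgroup of $\R$ containing arbitrarily small positive elements is dense, so $\ta_* (K_0 (D))$ is dense, and part~(\ref{I_C_2622_RanK0_Ran}) converts this into the stated condition on the $(\sm_n)_* (K_0 (C_n))$.

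Next I treat the converse, which is the main obstacle.
Assume the subgroup is dense.
I would invoke a known characterization: a simple unital \ca{} with stable rank one, strict comparison, and unique tracial state, whose projections have dense trace values and which has enough projections in the Cuntz semigroup sense, has real rank zero.
The key step is to show that every hereditary subalgebra of $D$ contains a projection.
By strict comparison, the rank function $d_\ta$ determines the Cuntz class of any positive element.
Using stable rank one, a projection $p$ with $\ta (p) < d_\ta (b)$ is Cuntz below $b$ and so becomes unitarily equivalent to a projection in $\overline{b D b}$.
Density supplies such $p$ with trace arbitrarily close to $d_\ta (b)$, and this should yield approximate eigenprojections.

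This last step is delicate and I expect it to be the hardest part.
Here I would try the decomposition~(\ref{Eq_6524_Decomp}) together with Robert's selfless results, or cite Robert's theorem that a selfless \ca{} has real rank zero exactly when the image of the trace on $K_0$ is dense.
If that citation is not available, I would argue through the Cuntz semigroup: $\Cu (D)$ would then be $V (D) \sqcup (0, \infty]$, and with stable rank one this gives real rank zero by the known characterization for such $\Cu$-semigroups.
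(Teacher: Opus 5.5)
Part~(1) matches the paper. Your proof that real rank zero forces density is correct and self-contained: a Haar unitary gives a positive element with diffuse spectral measure, hereditary subalgebras of a real rank zero algebra have real rank zero, and faithfulness of $\tau$ gives $0 < \tau (p) < \ep$. The paper instead gets both directions of part~(2) from a single citation.

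The converse, however, has a genuine gap.

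\emph{Why your key step is insufficient.} The step you call key is that every nonzero hereditary subalgebra $\overline{bDb}$ contains a nonzero projection. This is only property (SP), which is strictly weaker than real rank zero. Finding projections $p \in \overline{bDb}$ with $\tau (p)$ close to $d_\tau (b)$ gives no control on $\| b - p b \|$, because closeness in trace is not a norm estimate. Turning density of $\tau_* (K_0 (D))$ into norm-approximate units of projections is the entire content of this implication. You replace it by ``this should yield approximate eigenprojections'' plus three fallbacks, none of which is carried out. (Also, $d_\tau$ alone does not determine Cuntz classes: a projection and a noncompact element can have the same rank.)

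\emph{How the paper closes it.} Your first fallback points in the paper's direction. The paper uses the regrouping $D \cong \bigastr_{j} (D_j, \rho_j)$ from~(\ref{Eq_6524_Decomp}) and observes that each $D_j$ has a Haar unitary, hence a unitary in $\ker (\rho_j)$. It then applies \cite[Theorem~2.1(iii)]{Dykema_Rordam_II}, a result about reduced free products of exactly this form. That result says $D$ has real rank zero if and only if $\tau_* (K_0 (D))$ is dense, and part~(1) finishes the argument.

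\emph{What your Cuntz semigroup fallback would need.} It can be made into a proof, but you would have to supply two arguments.
\begin{enumerate}
\item Show that $\Cu (D)$ is algebraic. For noncompact $x = \langle b \rangle$, use density and strict comparison to choose projections whose traces increase to $d_\tau (x)$. Then check that every $x' \ll x$ has $d_\tau (x') < d_\tau (x)$; otherwise faithfulness of $\tau$ gives $b$ a spectral gap at $0$ and makes $x$ compact.
\item Carry out the stable rank one step. From $\langle (b - \ep)_+ \rangle \leq \langle p \rangle \leq \langle b \rangle$ with $p$ a projection, conjugation by a unitary in the unitization of $\overline{bDb}$ produces a projection $q \in \overline{bDb}$ with $q (b - 2 \ep)_+ = (b - 2 \ep)_+$. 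Hence $\| b - q b \| \leq 2 \ep$, which gives approximate units of projections in every hereditary subalgebra.
\end{enumerate}
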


\begin{proof}
We prove~(\ref{I_C_2622_RanK0_Ran}). For $n \in \N$, let
\[
\io_n\colon C_n\to D
\]
be the inclusion of $C_n$ as the $n$-th free factor.
Then $\ta \circ \io_n = \sm_n$ for all $n \in \N$.
By \cref{L_2622_KthGen}, $K_0(D)$ is generated by
$\bigcup_{n=1}^{\infty}(\io_n)_*(K_0(C_n))$.
Applying $\ta_*$, and using
$\ta_*\circ(\io_n)_*=(\sm_n)_*$,
gives~(\ref{I_C_2622_RanK0_Ran}).

For~(\ref{I_C_2622_RanK0_RRZ}), use the proof of
\cref{lemma_Infinite_Free_Product_Selfless}: the algebra $D$ can be
written as an infinite reduced free product $D\cong
\bigastr_{j=1}^{\infty}(D_j,\rh_j)$
such that each $D_j$ contains a Haar unitary, and hence a unitary in
$\ker(\rh_j)$.
Therefore \cite[Theorem~2.1(iii)]{Dykema_Rordam_II}
applies to this decomposition,
and implies that $D$ has real rank zero if
and only if $\ta_*(K_0(D))$ is dense in~$\R$.
Now use~(\ref{I_C_2622_RanK0_Ran}).
\end{proof}

Recall that a one dimensional NCCW complex $A$
in the sense of \cite[2.2]{Robert_NCCW} is
the pullback in a diagram of the following form,
in which $E$ and $F$ are finite
dimensional C*-algebras, $\ev_t \colon C ([0, 1], \, F) \to F$ is
evaluation at $t \in [0, 1]$, and $\varphi$ is a homomorphism:
\[
\begin{CD}
A @>{\pi_2}>> C ([0, 1], \, F)   \\
@VV{\pi_1}V  @VV{(\ev_0, \ev_1)}V            \\
E @>{\ph}>> F \oplus F.
\end{CD}
\]
That is,
\[
A = \bigl\{ (x, f) \in E \oplus C ([0, 1], \, F) \colon
\ph (x) = ( f (0), f (1) ) \bigr\}.
\]
The following is a special case
of \cite[Theorem B]{abstract_classification}.

\begin{Prop}\label{prop_embedding_of_nccw}
Let $A$ be a unital direct limit of one dimensional NCCW complexes
which satisfies $K_0(A) \cong \Z \cdot [1_A]$ and $K_1 (A) = 0$.
Let $\rh$ be a faithful tracial state on $A$.
Then there exists a unital
embedding $\te \colon A \to \Zh$ such that
$\te_{*} \colon K_0 (A) \to K_0 (\Zh)$ is an isomorphism
and, with $\tau$ denoting the
unique tracial state on $\Zh$, we have $\tau \circ \te = \rh$.
\end{Prop}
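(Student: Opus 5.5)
We will obtain this as an application of the classification of unital homomorphisms from one dimensional NCCW-type algebras in \cite[Theorem~B]{abstract_classification}. The plan is to show that the data needed to classify unital embeddings $A \to \Zh$ is already fixed by the hypotheses. That data is the map on $K_0$, the map on $K_1$, the induced map on traces, and the map on the Hausdorffized unitary group $\overline{K}_1^{\mathrm{alg}}$ (or equivalently the Cuntz semigroup). Once this is checked, the existence half of the theorem will produce~$\te$.

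The first step is to write down the target invariant. Since $K_0(A) = \Z \cdot [1_A]$, there is a unique unital positive group homomorphism $K_0(A) \to K_0(\Zh) = \Z$, namely $[1_A] \mapsto 1$, and it is an isomorphism. Since $K_1(A) = 0 = K_1(\Zh)$, the $K_1$ map is zero. On traces, $\Zh$ has the single tracial state~$\tau$, so we take the affine map $T(\Zh) \to T(A)$ sending $\tau$ to~$\rh$.

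The second step is to check the compatibility conditions. The pairing condition holds because $\rh([1_A]) = 1 = \tau([1_{\Zh}])$. For the algebraic $K_1$ part, $\overline{K}_1^{\mathrm{alg}}(\Zh) \cong \mathrm{Aff}(T(\Zh))/\overline{\tau_*K_0(\Zh)} = \R/\Z$. The required map from $\overline{K}_1^{\mathrm{alg}}(A)$ is then determined, using $K_1(A) = 0$, by the de la Harpe--Skandalis determinant of $\rh$. Concretely, it is the map $\mathrm{Aff}(T(A))/\overline{\rh_*K_0(A)} \to \R/\Z$ given by evaluating at $\rh$, and it is compatible because $\rh_*K_0(A) = \Z$.

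The third step is to confirm the hypotheses of \cite[Theorem~B]{abstract_classification}. The domain must be a direct limit of one dimensional NCCW complexes, which is given. The codomain $\Zh$ must be simple, nuclear, $\Zh$-stable, and satisfy the UCT, which is standard. Since $\rh$ is faithful and $\Zh$ is simple, the map we want should be injective. In the classification theorem, full or faithful maps correspond to faithful trace data, so faithfulness of $\rh$ is exactly what makes the resulting homomorphism injective. Theorem~B then gives a unital homomorphism $\te$ realizing this invariant. In particular $\te_*$ is an isomorphism on $K_0$ and $\tau \circ \te = \rh$, because $\tau \circ \te$ is the image of $\tau$ under the trace map.

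The main obstacle will be matching the precise form of the invariant in \cite[Theorem~B]{abstract_classification}. In particular, we must verify the injectivity criterion (faithfulness of $\rh$) and that the $\overline{K}_1^{\mathrm{alg}}$ and total K-theory components impose no further constraint. We expect the latter to be harmless here since $K_*(A)$ is torsion free with $K_1(A) = 0$.
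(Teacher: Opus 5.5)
Your proposal takes essentially the same route as the paper, which simply calls the proposition a special case of \cite[Theorem~B]{abstract_classification}; your check that the invariant data ($[1_A] \mapsto 1$ on $K_0$, zero on $K_1$, $\tau \mapsto \rh$ on traces, with the $\overline{K}_1^{\mathrm{alg}}$ and total K-theory maps forced, and faithfulness of $\rh$ giving injectivity) is compatible is exactly what that citation leaves implicit. One correction: the cited theorem is not specific to NCCW complexes, and $\overline{K}_1^{\mathrm{alg}}$ is not interchangeable with $\Cu$; what must be checked on the domain is that $A$ is unital, separable, nuclear (in particular exact) and satisfies the UCT, which holds because one dimensional NCCW complexes are type~I and these properties pass to countable inductive limits.
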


It isn't enough to require that $A$ have no nontrivial \pj{s}.
As was pointed out to us by Leonel Robert, the algebra
\[
\bigl\{ f \in C ([0, 1], \, M_2) \colon
  {\mbox{$f (0)$ is diagonal and $f (1)$ is a scalar}} \bigr\}
\]
is a counterexample.

\begin{rmk}
It further follows from \cite[Theorem B]{abstract_classification}
that any two embeddings as in
\cref{prop_embedding_of_nccw} are approximately unitarily equivalent.
We need a stronger conclusion, allowing for nonnuclear codomains,
hence the
need to restrict the domain to be a direct limit
of one dimensional NCCW complexes.
See \cref{C_5Z18_Hom_on_Cu} below.
\end{rmk}

\section{The main theorems}\label{Sec_5Y27_Main}

In this section, we prove our main theorems,
give some interesting special cases as corollaries,
and give some examples.

We can view \cref{thm_main} and \cref{thm_main_RRZ}
as classification results in terms of the Elliott invariant,
for the following reason.
First, by combining \cite[Theorem 6.4]{Thn2} and \cite[Theorem 1.1]{Hsgw}
in the same manner as in the proof of \cref{L_2622_KthGen},
it is easy to show that the two algebras have the same K-theory.
Indeed, the obvious map $\bigastr_{n = 1}^{\infty} C_n \to A \fr \bigastr_{n =
1}^{\infty} C_n$
is an isomorphism on K-theory.
Using the fact that both algebras have a unique \tst{} and
\cite[Theorem 2.1(i)]{Dykema_Rordam_II}
(order on projections is determined by traces),
one sees that this map is an isomorphism of the Elliott invariants.

\begin{Thm}\label{thm_main}
Let $( C_n , \sigma_n )_{n \in \N}$ be a sequence of separable unital
C*-algebras equipped with prescribed faithful tracial states.
Suppose that, in the notation of \cref{def_At},
\begin{equation}\label{Ex_6524_Star}
\sum_{n=1}^{\infty} ( 1 - \At(C_n,\sigma_n) ) = \infty .
\end{equation}
Let $A$ be a unital direct limit of one dimensional NCCW complexes
which satisfies $K_0(A) \cong \Z \cdot [1_A]$ and $K_1 (A) = 0$.
Let $\rh$ be a faithful tracial state on $A$.
Then, with respect to the tracial states $\rho$ and
$\sm_1, \sm_2, \ldots$, we have
\[
A \fr \bigastr_{n = 1}^{\infty} C_n \cong\bigastr_{n = 1}^{\infty} C_n.
\]
\end{Thm}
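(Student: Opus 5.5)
We will build an Elliott approximate intertwining, in the form of \cref{thm_Elliott_intertwining}, between two direct systems. The first system has $A_n = \bigastr_{k=1}^{n} C_k$ with the canonical inclusions, and its limit is $D = \bigastr_{n=1}^{\infty} C_n$. The second has $B_n = A \fr \bigastr_{k=1}^{n} C_k$ with the canonical inclusions, and its limit is $A \fr D$.

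Before starting, we regroup the factors. Since (\ref{Ex_6524_Star}) holds, so does its restriction to any cofinal tail, and we can pass to blocks as in the proof of \cref{lemma_Infinite_Free_Product_Selfless}. Partition $\N$ into infinitely many infinite sets, each of which still satisfies the atom condition. This lets us reserve, at every stage, an infinite reduced free product $E_n$ of factors not yet used. By \cref{lemma_Infinite_Free_Product_Selfless}, each $E_n$ is simple with unique trace, stable rank one and strict comparison, and contains a unital copy of $\Zh$. By \cref{prop_embedding_of_nccw}, there is a trace preserving unital embedding $\te_n \colon A \to \Zh \subseteq E_n$ that is an isomorphism on $K_0$.

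The maps are built as follows. We define $\mu_n \colon A_n \to B_n$ (after reindexing so that $B_n$ contains enough factors) to be the inclusion. We define $\nu_n \colon B_n \to A_{n+1}$ by \cref{lem_extending_maps_along_free_products} as $\te_n \fr \id$. Here $A$ is sent into a block of fresh factors $E_n \subseteq A_{n+1}$, and $\bigastr_{k\le n} C_k$ is sent identically. This is legitimate because $\te_n$ is trace preserving into the free factor $E_n$.

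Composing these maps, $\nu_n \circ \mu_n$ is exactly the inclusion $A_n \to A_{n+1}$, so condition~(\ref{5X23_Acomm}) holds on the nose. Condition~(\ref{5X23_SmD}) is harder. The map $\mu_{n+1}\circ\nu_n$ sends $A$ into $B_{n+1}$ via $\te_n$, whereas $\bt_{n+1,n}$ sends $A$ to the free factor $A$. The two maps agree on $\bigastr_{k\le n} C_k$.

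To fix this, we compare the two unital homomorphisms $\te_n$ and $\iota_A$ from $A$ into the larger algebra $A \fr E_n$ (or into $B_{n+1}$). That algebra is again an infinite reduced free product satisfying the hypotheses, so by \cref{lemma_Infinite_Free_Product_Selfless} it has stable rank one and a unique trace. Both maps preserve the trace and agree on $K_0$, since both send $[1]$ to $[1]$ and $K_1(A)=0$. Robert's uniqueness theorem for maps from direct limits of one dimensional NCCW complexes into stable rank one algebras, via the Cuntz semigroup with the unique trace and strict comparison (the forthcoming \cref{C_5Z18_Hom_on_Cu}), then shows that they are approximately unitarily equivalent. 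So on a finite subset of $A$ there is a unitary $u$ with $\Ad(u)\circ\te_n$ close to $\iota_A$.

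The conjugation by $u$ must not disturb the first $n$ factors. We therefore apply Robert's theorem inside $A \fr E_n$ and extend by $\id$ on $\bigastr_{k\le n} C_k$, as the introduction explains. That is, we replace $\bt_{n+1,n}$-compatible data by composing with $\Ad(u) \fr \id$. This changes the connecting maps of the second system by automorphisms, and \cref{lem_isomorphism_different_systems} (with \cref{lem_isomorphism_different_systems_0}) shows that the direct limit is unchanged. The same device is used for the first system if needed.

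Finally, we choose $\Xi_n$ and $\Sm_n$ as finite sets of words in the algebraic free product built from increasing dense sequences in each factor. We choose $\ep_n = 2^{-n}$, and use unitaries good on $\Sm_n$ up to $\ep_n$. The approximate containment conditions~(\ref{5X23_phnA})--(\ref{5X23_psin_2}) then hold, because homomorphisms that are close on generators are close on words of bounded length.

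The main obstacle is the uniqueness step. We need approximate unitary equivalence of the two trace preserving embeddings of $A$ in a nonnuclear stable rank one target, and we need the implementing unitary to act through an automorphism of the form $\Ad(u)\fr\id$ that fixes the remaining free factors. We will take the first from Robert's classification of maps into stable rank one algebras via $\Cu$. For $\Cu$ agreement we use the fact that in these algebras $\Cu$ is determined by $K_0$ and the unique trace, using strict comparison. The second point, together with the bookkeeping of which fresh factors are consumed at each stage, is handled by the regrouping into infinitely many infinite blocks.
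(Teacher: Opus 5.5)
Your architecture is the paper's, with the two rows of the intertwining interchanged. After regrouping into blocks, your $E_n$ is literally the next free factor. One family of triangles then commutes exactly. The other is repaired by perturbing the connecting maps of the system containing $A$ by $\Ad(u)\fr\id$ with $u\in A\fr E_n$, and \cref{lem_isomorphism_different_systems} identifies the limits.

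The genuine difference is how you get approximate unitary equivalence of $\io_A$ and $\te_n$ as maps $A\to A\fr E_n$. You apply Robert's theorem directly in $A\fr E_n$, but \cref{C_5Z18_Hom_on_Cu} does not apply to that target. The $C_n$ in \cref{thm_main} are not assumed exact, and $K_0(A\fr E_n)$ contains the image of $K_0(E_n)$, so it is usually not $\Z\cdot[1]$. The paper sidesteps this by proving that $\kp_1$ and $\kp_2\circ\te$ are approximately unitarily equivalent in $A\fr\Zh$. That algebra is exact with $K_0=\Z\cdot[1]$ and $K_1=0$, so \cref{C_5Z18_Hom_on_Cu} applies verbatim, and the paper then pushes the unitaries forward along $\id_A\fr\zt_n$.

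Your direct route also works, but your sentence about $\Cu$ being determined by $K_0$ and the trace has to become a lemma. It is provable, as follows. Since $\rh$ is faithful, trace preserving unital maps $\ph,\ps\colon A\to B$ are injective, so for $a\in(K\otimes A)_+$ the spectra of $a$, $\ph(a)$, $\ps(a)$ agree.
\begin{itemize}
\item If $0$ is not a limit point of the spectrum, then $\langle\ph(a)\rangle$ and $\langle\ps(a)\rangle$ are classes of projections. They have the same $K_0$-class because $K_0(A)=\Z\cdot[1_A]$, so they are equal by cancellation (stable rank one).
\item If $0$ is a limit point, every $x'\ll\langle\ph(a)\rangle$ lies below some $\langle(\ph(a)-\ep)_+\rangle$. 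Faithfulness of $\ta$ then gives $d_\ta(x')<d_\rh(a)=d_\ta(\ps(a))$. Strict comparison with respect to the unique $2$-quasitrace (recorded in the proof of \cref{lemma_Infinite_Free_Product_Selfless}; this replaces exactness) gives $x'\le\langle\ps(a)\rangle$. By symmetry the two classes coincide.
\end{itemize}
So your route yields a more general uniqueness statement (no exactness, arbitrary $K_0$ of the target), at the cost of proving it. The paper's detour through $A\fr\Zh$ needs only the off-the-shelf identification $\Cu(B)\cong(0,\infty]\amalg\Nz$ used in \cref{L_5Z18_Hom_on_Cu}.

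Separately, the reason you give for the containment conditions (\ref{5X23_phnA})--(\ref{5X23_psin_2}) of \cref{thm_Elliott_intertwining} is not the operative one. Closeness on generators is what gives (\ref{5X23_Acomm}) and (\ref{5X23_SmD}). For your perturbed system, (\ref{5X23_psinB}) and the exhaustion condition (\ref{5X23_Smn}) need not hold for word sets in fixed dense sequences, because $u^*au$ need not be close to a short word in the old generators. You must do two things:
\begin{itemize}
\item After each unitary is chosen, enlarge the next finite set by approximants of the images of all earlier sets, at all earlier levels, under the perturbed connecting maps (or put those images in outright).
\item For (\ref{5X23_psin_2}), make the generators of the new block contain the images under $\te_n$ of the generators from $A$.
\end{itemize}
This is what the paper's polynomials $q_x^{(m)}$, and the inclusion of the $\ph_j$-images in the sets $F_n^{(j)}$, accomplish. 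It is routine, but it is where most of the work in the paper's proof goes.
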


\begin{Thm}\label{thm_main_RRZ}
Let $( C_n , \sigma_n )_{n \in \N}$ be a sequence of separable unital
exact C*-algebras equipped with prescribed faithful tracial states.
Suppose that there exists a partition of $\N$ into infinite
subsets $I_1,I_2,\ldots$ such that for any $n \in \N$,
the subgroup of $\R$ generated by
$\{ (\sm_k)_* (K_0 (C_k)) \mid k \in I_n \}$ is dense and
\begin{equation}\label{Ex_6524_StSt}
\sum_{k \in I_n} ( 1 - \At(C_k,\sigma_k) ) = \infty .
\end{equation}
Let $X$ be a contractible compact metric space
and let $\rho$ be a faithful tracial state on $C (X)$.
Then, with respect to the tracial states $\rho$ and
$\sm_1, \sm_2, \ldots$, we have
\[
C (X) \fr \bigastr_{n = 1}^{\infty}
 C_n \cong \bigastr_{n = 1}^{\infty} C_n
.
\]
\end{Thm}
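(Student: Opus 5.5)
The plan is to follow the same Elliott intertwining scheme as for \cref{thm_main}, with the uniqueness input now coming from Matui's theorem~\cite{Matui} for \hm{s} from $C (X)$ instead of Robert's theorem for NCCW complexes.

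\textbf{Setting up the blocks.} For $m \in \N$ set $(D_m, \rh_m) = \bigastr_{k \in I_m} (C_k, \sm_k)$. Then
\[
D = \bigastr_{n = 1}^{\infty} C_n \cong \bigastr_{m = 1}^{\infty} (D_m, \rh_m).
\]
By~(\ref{Ex_6524_StSt}) and \cref{lemma_Infinite_Free_Product_Selfless}, each $D_m$ is simple, has a unique \tst, has stable rank one and strict comparison. Since the $C_k$ are exact, each $D_m$ is exact. By \cref{C_2622_RanK0}(\ref{I_C_2622_RanK0_RRZ}) and the density hypothesis, each $D_m$ has real rank zero. The same holds for every finite free product $D_m \fr D_{m+1} \fr \cdots \fr D_{m'}$, and for products with $C (X)$ adjoined: exactness is preserved, and \cref{lemma_Infinite_Free_Product_Selfless} and \cref{C_2622_RanK0} apply after regrouping. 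In particular all these algebras satisfy the hypotheses of Matui's uniqueness theorem: simple, separable, unital, exact, real rank zero, stable rank one, weakly unperforated $K_0$ (by strict comparison), and one tracial state.

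\textbf{Existence step.} For every $m$ I need a unital injective \hm{} $\te_m \colon C (X) \to D_m$ with $\rh_m \circ \te_m = \rh$. I expect this to be the main obstacle. My first attempt is an approximation argument. Since $D_m$ has real rank zero, stable rank one and $\rh_{m *} (K_0 (D_m))$ dense, for any finite partition-of-unity approximation of $\rh$ one can build finite spectrum \hm{s} $C (X) \to D_m$. These send point evaluations to mutually orthogonal \pj{s} whose traces approximate $\rh$ on a finer and finer scale, with rational-type weights taken in the dense group. Consecutive finite spectrum maps agree on K-theory, because $X$ is contractible so all maps $C (X) \to D_m$ are homotopic to a character on K-theory, and they approximately agree on the trace. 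Matui's uniqueness theorem then gives unitaries conjugating each approximation close to the next on finite sets. A standard one-sided intertwining produces a limit \hm{} $\te_m$ with trace exactly $\rh$. Faithfulness of $\rh$ together with faithfulness of $\rh_m$ gives injectivity. If this fails, I would instead try to obtain $\te_m$ from Matui's existence results directly, or factor through $C ([0,1])$ composed with a Cantor-type surjection.

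\textbf{Intertwining.} Consider the two direct systems $B_n = D_1 \fr \cdots \fr D_n$ and $A_n = C (X) \fr D_1 \fr \cdots \fr D_n$ with the canonical inclusions. Their limits are $D$ and $C (X) \fr D$. Define
\[
\mu_n = (\te_{n} \fr \id) \colon A_n \to B_n,
\]
using $\te_n$ into the last factor $D_n$ and \cref{lem_extending_maps_along_free_products}. Let $\nu_n \colon B_n \to A_{n+1}$ be the inclusion. Then $\nu_n \circ \mu_n$ differs from the inclusion $A_n \to A_{n+1}$ only in that $C (X)$ is sent via $\te_n$ into $D_n$ rather than to the $C (X)$ factor. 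Both images of $C (X)$ lie in the algebra $C (X) \fr D_n \fr D_{n+1}$, which satisfies Matui's hypotheses, and the two maps agree on the trace and, by contractibility, on K-theory. Matui's theorem then gives a unitary $u$ there with $\Ad (u) \circ \io$ close to $\te_n$ on a prescribed finite set. The discrepancy on the $D$-factors is absorbed as in the introduction: $\Ad (u)$ extends to an automorphism of the whole finite free product, and \cref{lem_isomorphism_different_systems_0} and \cref{lem_isomorphism_different_systems} let me replace the connecting maps by these twisted ones without changing the limits. The analogous estimate for $\mu_{n+1} \circ \nu_n$ versus $\bt_{n+1, n}$ is handled symmetrically, comparing $\te_n$ and $\te_{n+1}$ inside $D_n \fr D_{n+1}$.

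\textbf{Conclusion.} Finally, choose finite subsets $\Xi_n$, $\Sm_n$ in the algebraic free products and a summable decreasing sequence $\ep_n$ so that all hypotheses of \cref{thm_Elliott_intertwining} hold. That theorem then gives the isomorphism $C (X) \fr D \cong D$.
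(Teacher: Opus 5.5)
Your intertwining step is essentially the paper's: the same blocks $D_n$, unitaries $u_n$ obtained from \cite[Theorem~4.8]{Matui} inside $C(X) \fr D_n$, twisted connecting maps $(\Ad(u_n^*) \fr \id) \circ \alpha_{n+1,n}$, and \cref{lem_isomorphism_different_systems_0} to undo the twist. However, your indexing makes $\mu_n$ undefined. With $A_n = C(X) \fr D_1 \fr \cdots \fr D_n$ you cannot send $C(X)$ into $D_n$ while also sending the free factor $D_n$ identically onto $D_n$. \cref{lem_extending_maps_along_free_products} only combines maps into distinct free factors. In fact no such homomorphism exists: restricted to the simple algebra $C(X) \fr D_n$ it would be a unital homomorphism killing the nonzero element $f - \te_n(f)$, for nonscalar $f$. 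Use $A_n = C(X) \fr D_1 \fr \cdots \fr D_{n-1}$, as the paper does. Then $\mu_{n+1} \circ \nu_n = \bt_{n+1, \, n}$ exactly, so nothing needs to be compared between $\te_n$ and $\te_{n+1}$. Unitaries are needed only for the upper triangles, inside $C(X) \fr D_n$.

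The genuine gap is the existence of $\te_n$. Your approximation scheme applies Matui's uniqueness theorem to consecutive finite spectrum maps. In the form used here, that theorem only says that unital homomorphisms from $C(X)$ with \emph{equal} traces (and equal KL-classes) are approximately unitarily equivalent. Two finite spectrum maps have different atomic trace measures that are merely close, so the theorem says nothing about them. To run your argument you would need either a quantitative uniqueness theorem (closeness of traces on a finite set plus a lower bound on the measure of small balls above some scale), or a hands-on matching of subprojections of the two maps via a near-diagonal coupling with entries in the dense trace range of $K_0(D_n)$. Your fallbacks do not obviously help either. For example, factoring through $C([0,1])$ needs a continuous surjection $[0,1] \to X$, which does not exist for contractible but non-locally-connected $X$, such as the cone over the Cantor set. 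The paper avoids all of this. By \cite[Corollary~C]{abstract_classification} there is a unital embedding $\te \colon C(X) \to \Zh$ with $\ta \circ \te = \rh$, where $\ta$ is the tracial state of $\Zh$. \cref{lemma_Infinite_Free_Product_Selfless} gives a unital embedding $\zt_n \colon \Zh \to D_n$. Then $\zt_n \circ \te$ is automatically trace preserving, because $\Zh$ and $D_n$ each have a unique tracial state.
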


The proof of \cref{thm_main_RRZ} is almost identical to that of
\cref{thm_main}.
In its proof, we therefore only indicate what changes.

\cref{Ex_6524_CplusC}, \cref{Ex_6524_Mn}, and especially
\cref{Ex_6524_Const} show that
the condition~(\ref{Ex_6524_Star}) in \cref{thm_main}
and the condition~(\ref{Ex_6524_StSt}) in \cref{thm_main_RRZ}
are very commonly satisfied.

Some condition on the algebras $(C_n, \sm_n)$
in \cref{thm_main} and \cref{thm_main_RRZ}
is necessary.
See \cref{Ex_6604_NonSmp} below.

It seems convenient to isolate the calculation leading
to \cref{C_5Z18_Hom_on_Cu}.
It is surely known, but we have not found a reference.
\cref{L_5Z18_Hom_on_Cu} seems potentially useful elsewhere.

\begin{Lemma}\label{L_5Z18_Hom_on_Cu}
Let $A$ and $B$ be unital \ca{s}.
Assume that $B$ is infinite dimensional, simple, separable, exact,
has a unique \tst~$\ta$, has strict comparison, has stable rank $1$,
and that $K_0 (B) = \Z \cdot [1_B]$.
Then any two unital \hm{s} $\ph, \ps \colon A \to B$ such that
$\ta \circ \ph = \ta \circ \ps$ induce the same maps
$\ph_{*}, \ps_{*} \colon \Cu (A) \to \Cu (B)$.
\end{Lemma}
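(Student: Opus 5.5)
The plan is to show that the Cuntz semigroup of $B$ is determined by rank functions, so that $\ph_*$ and $\ps_*$ are both forced by the single trace $\ta \circ \ph = \ta \circ \ps$.

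\emph{Step 1: elements of $\Cu (B)$ are determined by projection class plus rank.} Let $d_\ta$ be the dimension function
\[
d_\ta (a) = \lim_{n \to \infty} \ta (a^{1/n})
\]
on $(B \otimes K)_+$. Since $B$ is simple, exact, has stable rank one and strict comparison, and has a unique trace (hence a unique $2$-quasitrace, by exactness and Haagerup's theorem), the standard description of the Cuntz semigroup applies (Brown--Perera--Toms, or Antoine--Perera--Santiago in the stable rank one setting). Every element of $\Cu (B)$ is either the class of a projection or purely noncompact. Purely noncompact elements are determined by $d_\ta$, with value in $(0, \infty]$. Projection classes are determined by their $K_0$ class, since stable rank one gives cancellation.

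\emph{Step 2: use the hypotheses to make $d_\ta$ a complete invariant.} Because $K_0 (B) = \Z \cdot [1_B]$ and $\ta$ is faithful, a projection is determined up to Murray--von Neumann equivalence by its trace, which is an integer. So every element $x \in \Cu (B)$ is determined by the pair consisting of $d_\ta (x)$ together with the information of whether $x$ is compact. There is one further subtlety: for $n \in \Z_{> 0}$, both $[1_{M_n (B)}]$ and a purely noncompact element may have $d_\ta = n$. A purely noncompact element with value $n$ exists because $B$ is simple and infinite dimensional (take $a \in M_n (B)_+$ with $\spec (a) = [0, 1]$ and full trace). These two elements must be distinguished.

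\emph{Step 3: compactness of the image is intrinsic to the image.} For $x = [a]$ with $a \in (A \otimes K)_+$, the rank $d_\ta (\ph_* x) = d_{\ta \circ \ph} (a)$ equals $d_\ta (\ps_* x)$. It remains to show that $\ph_* x$ is compact if and only if $\ps_* x$ is. Compactness in $\Cu (B)$ for stably finite simple $B$ means $\ph (a)$ is Cuntz equivalent to a projection, which is equivalent to $0$ being an isolated point of $\spec (\ph (a)) \cup \{0\}$ (use stable rank one). I would try to express this via the spectral measure of $\ta \circ \ph$. Since $\ta$ is faithful on $B$, the support of the measure $\mu_{\ta, \ph (a)}$ equals $\spec (\ph (a))$. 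This measure is determined by the tracial functional $\ta \circ \ph \otimes \mathrm{Tr}$ evaluated at $f (a)$. Hence $\spec (\ph (a)) = \spec (\ps (a))$, and so compactness agrees. I expect this step to be the main obstacle. One must check the faithfulness argument for $\ta \otimes \mathrm{Tr}$ on $B \otimes K$, and handle unbounded elements via cutdowns $(a - \ep)_+$ and suprema.

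\emph{Step 4: conclude.} With Step 3, $\ph_* x$ and $\ps_* x$ agree in both rank and compactness, so by Steps 1 and 2 they are equal. Hence $\ph_* = \ps_*$ on $\Cu (A)$.
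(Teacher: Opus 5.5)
Your proposal is correct and follows essentially the paper's argument. You identify $\Cu(B)$ as projection classes, determined by their integer trace, together with noncompact classes, determined by $d_\ta$. You then decide whether $\langle \ph(a) \rangle$ is compact according to whether $0$ is a limit point of $\spec(\ph(a))$, which faithfulness of $\ta$ lets you read off from the spectral measure of $\ta \circ \ph$; this is exactly the paper's spectral-measure step.

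The only difference is minor. You use just the injectivity of $x \mapsto (d_\ta(x), \text{compact or not})$. The paper instead invokes the full Brown--Toms identification $\Cu(B) \cong (0,\infty] \amalg \Nz$, which needs Thiel's Haar-unitary result to get surjectivity of rank functions. Your route therefore avoids that extra input.
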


The map is described in the proof.
The proof only uses the structure of $\Cu (B)$,
which, as is shown in the proof,
is necessarily isomorphic to $\Cu (\Zh)$.

\begin{proof}[Proof of \cref{L_5Z18_Hom_on_Cu}]
By \cite[Theorem A]{Thiel_diffuse}, $B$ has a Haar unitary, and therefore a
positive contraction $a$ such that the measure induced on the spectrum of
$a$
by the trace is Lebesgue measure. From this, it follows that the image of
$d_{\tau}$ applied to positive elements is $[0,1]$.
It now follows from \cite[Theorem~2.6]{BrnTms}
(see the definitions just before the theorem) that
$\Cu (B) \cong (0, \I] \amalg \Nz$ with the usual structure.
Write $R = (0, \I]$ and $N = \Nz$.
For the convenience of the reader, we describe the structure.
Give both $R$ and $N$ the usual semigroup structure and order.
Then $N$ is the Murray-von Neumann semigroup $\operatorname{V} (B)$.
The order on mixed pairs $x \in R$ and $y \in N$ is given by
$x \leq y$ \ifo{} the relation $x \leq y$ holds in~$\R$,
and $y \leq x$ \ifo{} the relation $y < x$ holds in~$\R$.
Addition for such pairs is the usual addition $x + y \in \R$,
taken to be in~$R$.

Set $\rh = \ta \circ \ph$.
We give a formula for $\ph_{*}$
entirely in terms of $\rh = \ta \circ \ph$.
Since also $\rh = \ta \circ \ps$, the lemma will follow.
Let $a \in (K \otimes A)_{+}$.
Let $\mu$ be the spectral measure on $[0, \I)$ induced by~$\rh$:
for any \ct{} $f \colon [0, \I) \to \C$,
$\rh (f (a)) = \int_{[0, \I)} f \, d \mu$.
This implies that $\ta (f (\ph (a))) = \int_{[0, \I)} f \, d \mu$
for all such~$f$.

First suppose that there is $\ep > 0$ such that
$\mu ((0, \ep)) = 0$.
Since $\ta$ is faithful, $\spec (\ph (a)) \cap (0, \ep) = \E$.
Therefore $\langle \ph (a) \rangle \in N$,
with the value being $d_{\ta} (\ph (a)) = d_{\rh} (a)$.
If, on the other hand, no such $\ep$ exists,
then $0$ is a limit point of $\spec ( \ph (a))$.
Thus, $\langle \ph (a) \rangle \in R$,
with the value again being $d_{\ta} (\ph (a)) = d_{\rh} (a)$.
\end{proof}

\begin{Cor}\label{C_5Z18_Hom_on_Cu}
Let $A$ and $B$ be unital \ca{s}.
Assume that $K_1 (A) = 0$
and that $A$ is a unital one dimensional NCCW~complex,
or is a unital countable direct limit of such algebras.
Assume that $B$ is simple, separable, exact, has stable rank one,
has a unique \tst~$\ta$, has strict comparison,
and that $K_0 (B) = \Z \cdot [1_B]$.
Let $\ph, \ps \colon A \to B$ be unital \hm{s} such that
$\ta \circ \ph = \ta \circ \ps$.
Then $\ph$ and $\ps$ are approximately unitarily equivalent.
\end{Cor}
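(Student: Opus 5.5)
The plan is to apply Robert's classification of homomorphisms from countable direct limits of one dimensional NCCW complexes into stable rank one algebras (\cite[Theorem~1.0.1]{Robert_NCCW}). The version we want says the following. Let $A$ be such a limit and let $B$ have stable rank one. Two homomorphisms $\ph, \ps \colon A \to B$ (unital, since they send $1_A$ to the same Cuntz class) are approximately unitarily equivalent provided they induce the same map $\Cu (\widetilde{A}) \to \Cu (\widetilde{B})$, or equivalently the same map on $\Cu^{\sim}$, compatibly with the unit. So the proof reduces to checking that $\ph$ and $\ps$ agree on the relevant Cuntz invariant.

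First step: we need that $B$ is infinite dimensional in order to invoke \cref{L_5Z18_Hom_on_Cu}. If $B$ were finite dimensional, it would be simple with $K_0 (B) = \Z \cdot [1_B]$, forcing $B = \C$. In that case $\ph = \ps$ is the unique character determined by the trace, and there is nothing to prove. So assume $B$ is infinite dimensional.

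Second step: \cref{L_5Z18_Hom_on_Cu} gives $\ph_* = \ps_*$ as maps $\Cu (A) \to \Cu (B)$. For unital $A$ and $B$ we have $\widetilde{A} \cong A \oplus \C$, and $\Cu^{\sim}$ is computed from $\Cu$ together with the unit and $K_0$-data. Hence agreement on $\Cu$, plus $\ph (1) = \ps (1) = 1$, yields agreement of the induced maps in Robert's invariant. Robert's theorem then gives approximate unitary equivalence.

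The hypothesis $K_1 (A) = 0$ enters at this point. Robert's classification for non-unital or unitized settings may involve an additional $K_1$-type or unitary-group component, such as a pairing via the de la Harpe--Skandalis determinant. Since $K_1 (A) = 0$, and since $K_0 (B) = \Z$ makes the Hausdorffized unitary data trivial on the relevant part, no further invariant needs matching.

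The main obstacle is the translation between \cref{L_5Z18_Hom_on_Cu}'s statement about $\Cu$ and the exact form of the invariant in Robert's uniqueness theorem. The subtle point is whether $\Cu$ of $A$ itself suffices or whether $\Cu$ of the unitization with its compact-element structure is required. I would handle this by applying \cref{L_5Z18_Hom_on_Cu} to the unitized maps $\widetilde{\ph}, \widetilde{\ps} \colon \widetilde{A} \to \widetilde{B}$ if needed, by composing with the unital structure. Alternatively, I would note that for unital maps Robert's Theorem~1.0.1 is stated directly in terms of $\Cu (\ph) = \Cu (\ps)$.

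Exactness of $B$ is used only through the lemma, where it ensures that $2$-quasitraces are traces so that $d_\ta$ determines $\Cu (B)$.
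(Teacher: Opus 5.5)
Your proposal is correct and is essentially the paper's proof: \cite[Theorem~1.0.1]{Robert_NCCW} reduces the problem to equality of the induced maps on $\Cu^{\sim}$, and for unital maps between unital algebras this follows from equality on $\Cu$. The paper cites \cite[Theorem~3.2.2(i)]{Robert_NCCW} for that step, and your observation $\widetilde{A} \cong A \oplus \C$ gives the same thing, since then $\widetilde{\ph} = \ph \oplus \id_{\C}$; this also makes your fallback of applying the lemma to the unitized maps unnecessary, and that fallback would fail anyway because $\widetilde{B}$ is not simple. \cref{L_5Z18_Hom_on_Cu} then supplies equality on $\Cu$.

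Your separate treatment of finite dimensional $B$ (which forces $B = \C$, where $\ph = \ps$) is a small but genuine improvement, since \cref{L_5Z18_Hom_on_Cu} assumes $B$ infinite dimensional while the corollary does not. One correction: $K_1 (A) = 0$ is simply part of the hypotheses of Robert's theorem, not a device for killing an extra unitary component of the invariant.
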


\begin{proof}
The hypotheses of \cite[Theorem~1.0.1]{Robert_NCCW} are satisfied.
The uniqueness part of that theorem therefore applies:
if $\ph^{\sim}, \ps^{\sim} \colon \Cu^{\sim} (A) \to \Cu^{\sim} (B)$
are equal, then $\ph$ and $\ps$ are approximately unitarily equivalent.
Since $A$ is unital, by \cite[Theorem 3.2.2(i)]{Robert_NCCW} it is
enough to know that $\ph_{*}, \ps_{*} \colon \Cu (A) \to \Cu (B)$
are equal.
This equality follows from \cref{L_5Z18_Hom_on_Cu}.
\end{proof}

\begin{proof}[Proof of \cref{thm_main}]
We may choose a partition of $\N$ into a countable union of infinite
countable sets $I_1, I_2, I_3, \ldots$
such that for any $k \in \N$, we have
$\sum_{n \in I_k} ( 1 - \At(C_n,\sigma_n) ) = \infty$.
By commutativity and associativity of countable reduced free products,
we may replace the
original algebras $(C_n,\sigma_n)$ with the block reduced free products
$\bigastr_{k \in I_n} (C_k, \sigma_k)$ and then rename these block
algebras as $C_n$, with their free product traces denoted by $\sigma_n$.
Thus, using \cref{lemma_Infinite_Free_Product_Selfless}, we may assume,
without loss of generality, that for each~$n$, the C*-algebra $C_n$
is simple, has a unique tracial state $\sigma_n$, has stable rank one,
has strict comparison, and admits a unital embedding from the Jiang-Su
algebra $\Zh$.

We will use \cref{thm_Elliott_intertwining}, with
(recalling \cref{Nt_5Z19_FP})
\[
A_n = A \fr \bigastr_{k = 1}^{n-1} C_k
\andeqn
B_n = \bigastr_{k = 1}^{n} C_k.
\]
We choose an arbitrary decreasing sequence $( \eps_n )_{n = 1, 2, 3,
\ldots}$
of strictly positive numbers such that $\sum_{n = 1}^{\I} \ep_n < \I$.
The maps and finite sets will be defined below.

For any $m, n \in \N$ with $m < n$, we write
\[
\iota_m^{(n)} \colon C_m \to A \fr C_{n-1} \fr C_{n-2} \fr \cdots \fr  C_1
\]
for the canonical
embedding of $C_m$ in the corresponding free factor.
(The lower index is in the reverse of the order
in which the factors appear.)
For $m=n$, we use
the same notation, with $A$ in place of $C_{n}$:
\[
\iota_{n}^{(n)} \colon A \to A \fr C_{n-1} \fr C_{n-2} \fr \cdots \fr  C_1 .
\]
Likewise, for any $m, n \in \N$ with $m \leq n$, we write
\[
\lambda_m^{(n)} \colon C_m \to C_n \fr C_{n-1} \fr \cdots \fr  C_1
\]
for the
canonical
embedding of $C_m$ in the corresponding free factor.
% By slight abuse of notation,
% we use the notation $\iota_m^{(n)}$ regardless of
% which C*-algebras are in the domain and in the codomain;
% this shouldn't
% cause problems, as those will always be clearly specified.

As usual, let $\tau$ be the unique tracial state on $\Zh$.
Use \cref{prop_embedding_of_nccw} to choose a unital
embedding $\te \colon A \to \Zh$ such that $\tau \circ \te = \rh$.
Fix unital homomorphisms $\zeta_n \colon \Zh \to C_n$
for $ n= 1, 2, \ldots$,
which exist by the rearrangement in the first paragraph of the proof.
Since $\Zh$ and $C_n$ have unique tracial states, $\zeta_n$ is
trace preserving, and hence so is $\varphi_n$ below.
For $ n= 1, 2, \ldots$, define
\begin{equation}\label{Eq_5Y17_St}
\varphi_n = \zeta_n \circ \theta \colon A \to C_n.
\end{equation}

Given $n \in \N$, we denote by
\[
\chi_1^{(n)} \colon A \to A \fr C_n
\andeqn
\chi_2^{(n)} \colon C_n \to A \fr C_n
\]
the first and second factor embeddings.
We next prove the following claim:
\begin{enumerate}
\item\label{I_5Y17_aue}
For any $n \in \N$, the maps $\chi_1^{(n)}, \chi_2^{(n)} \circ \ph_n
\colon
A \to A \fr C_n$
are approximately unitarily equivalent.
\setcounter{TmpEnumi}{\value{enumi}}
\end{enumerate}
Let $\kp_1 \colon A \to A \fr \Zh$ and $\kp_2 \colon \Zh \to A \fr \Zh$
be the first and second factor embeddings.
The following commutative diagram shows the maps we use:
\begin{equation}\label{Eq_5Y18_Diag_for_aue}
\begin{xy}
(0,25)*+{A }="top_1";
(45,25)*+{ \Zh  }="top_2";
(90,25)*+{C_n}="top_3";
(0,0)*+{ A }="bottom_1";
(45,0)*+{ A \fr \Zh }="bottom_2";
(90,0)*+{  A \fr C_n }="bottom_3";
{\ar^-{\theta} "top_1";"top_2"};
{\ar^-{\zeta_n} "top_2";"top_3"};
{\ar_-{\kappa_1} "bottom_1";"bottom_2"};
{\ar_-{\id_A \fr \zeta_n} "bottom_2";"bottom_3"};
{\ar^-{\kappa_2} "top_2";"bottom_2"};
{\ar^-{\chi_2^{(n)}} "top_3";"bottom_3"};
{\ar@/^2pc/^-{\varphi_n} "top_1";"top_3"};
{\ar@/_2pc/_-{\chi_1^{(n)}} "bottom_1";"bottom_3"};
\end{xy}.
\end{equation}
It follows from \cite[Corollary 5.3 and Theorem 3.1]{Robert_selfless}
that $A \fr \Zh$ is simple,
has stable rank $1$, strict comparison, and a unique tracial state.
Combining \cite[Theorem 6.4]{Thn2} and \cite[Theorem 1.1]{Hsgw}
in the same manner as in the proof of \cref{L_2622_KthGen},
we see that $K_0 (A \fr \Zh) \cong \Z$ and $K_1 (A \fr \Zh) = 0$,
with the class of $1$ generating $K_0$.
Moreover, $A \fr \Zh$ is exact by \cite[Corollary~4.3]{DkmShk}.
So \cref{C_5Z18_Hom_on_Cu} implies that
$\kp_1$ and $\kp_2 \circ \te$ are approximately unitarily equivalent.
The claim is now immediate from~(\ref{Eq_5Y18_Diag_for_aue}) by
applying the map $\id_A \fr \zeta_n$.

For $n \in \N$, we write
\[
\beta_{n + 1, \, n} \colon  C_n \fr C_{n-1} \fr
\ldots \fr C_1 \to C_{n+1} \fr C_n \fr C_{n-1} \fr
\ldots \fr C_1
\]
for the canonical embedding in the last $n$ factors, and
\[
\alpha_{n + 1, \, n} \colon A \fr C_{n-1} \fr
\ldots \fr C_1 \to A \fr C_n \fr C_{n-1} \fr
\ldots \fr C_1
\]
for the canonical embedding gotten from embedding $A$ in the first free
factor and the other factors in the last $n-1$ factors; if $n=1$, then
$\alpha_{2,1} = \iota_2^{(2)} \colon A \to A \fr C_1$.
We also denote by
\begin{equation}\label{Eq_6524_iotan}
\iota_n \colon  C_n \fr C_{n-1} \fr
\ldots \fr C_1 \to A \fr  C_n \fr C_{n-1} \fr
\ldots \fr C_1
\end{equation}
the canonical embedding into the last $n$ factors.

For notational convenience, we write
\[
\bigastr_{n = 1}^{\infty} C_n
 = \dirlim_n \bigl( ( C_n \fr C_{n-1} \fr \cdots \fr C_1)_{n \in \N},
    ( \beta_{n + 1, \, n})_{n \in \N} \bigr)
\]
and
\[
A \fr \bigastr_{n = 1}^{\infty} C_n
 = \dirlim_n
  \bigl( ( A \fr C_{n-1} \fr C_{n-2} \fr \cdots \fr C_1 )_{n \in \N},
 ( \alpha_{n + 1, \, n})_{n \in \N} \bigr).
\]

We start the main part of the proof with the following diagram,
in which we abbreviate
\begin{equation}\label{Eq_6524_idn}
\id_n = \id_{C_n \fr C_{n-1} \fr \cdots \fr C_1},
\end{equation}
and which is not approximately commutative:
\[
\begin{xy}
(0,25)*+{A }="top_1";
(32,25)*+{ A \fr C_1  }="top_2";
(83,25)*+{ A \fr C_2 \fr C_1 }="top_3";
(120,25)*+{ \cdots }="top_4";
(0,0)*+{ C_1 }="bottom_1";
(32,0)*+{ C_2 \fr C_1  }="bottom_2";
(83,0)*+{ C_3 \fr C_2 \fr C_1 }="bottom_3";
(120,0)*+{ \cdots }="bottom_4";
{\ar^-{\alpha_{2,1}} "top_1";"top_2"};
{\ar^-{\alpha_{3, 2}} "top_2";"top_3"};
{\ar^-{\alpha_{4, 3}} "top_3";"top_4"};
{\ar_-{\beta_{2,1}} "bottom_1";"bottom_2"};
{\ar_-{\beta_{3, 2}} "bottom_2";"bottom_3"};
{\ar_-{\beta_{4, 3}} "bottom_3";"bottom_4"};
{\ar^-{\varphi_1} "top_1";"bottom_1"};
{\ar_-{\iota_1} "bottom_1";"top_2"};
{\ar^-{\varphi_2 \fr \id_1} "top_2";"bottom_2"};
{\ar_-{\iota_{2}} "bottom_2";"top_3"};
{\ar^-{\varphi_3 \fr \id_2} "top_3";"bottom_3"};
{\ar_-{\iota_{3}}  "bottom_3";"top_4"};
(24,8)*+{\circlearrowleft};
(68,8)*+{\circlearrowleft};
(112,8)*+{\circlearrowleft};
\end{xy}.
\]
The lower triangles commute, that is, for $n \in \N$ we have
\begin{equation}\label{Eq_6524_lcomm}
(\ph_{n + 1} \fr \id_n) \circ \io_n = \beta_{n + 1, \, n}.
\end{equation}
Our goal is to modify the maps in
the top row so as to make this diagram approximately commute.

We will construct recursively a new top row,
with maps $\widetilde{\alpha}_{n+1,n}$
in place of~$\alpha_{n+1,n}$,
so that the resulting direct system
satisfies the hypotheses of \cref{thm_Elliott_intertwining},
and such that the direct limit of the top row is the same.
In the application of \cref{thm_Elliott_intertwining},
we will take $\widetilde{\alpha}_{n+1,n}$
in place of~$\alpha_{n+1,n}$,
\[
\mu_n = \ph_n \fr \id_{n - 1} \text{ with } \mu_1 = \ph_1,
\andeqn
\nu_n = \io_{n}.
\]
For $m, n \in \N$ with $m \leq n$, following \cref{Cv_5Z19_DLim}
we define
\[
\widetilde{\alpha}_{n, m} \colon A \fr C_{m-1} \fr C_{m-2} \fr \cdots \fr
C_1 \to A \fr  C_{n-1} \fr C_{n-2} \fr \cdots \fr C_1
\]
and
\[
\bt_{n, m} \colon  C_{m} \fr C_{m-1} \fr \cdots \fr C_1  \to  C_{n} \fr
C_{n-1} \fr \cdots \fr C_1 ,
\]
and for $l \in \N$ we define
\[
\begin{split}
& \widetilde{\alpha}_{\I, l} \colon
A \fr  C_{l-1} \fr C_{l-2} \fr \cdots \fr C_1
\\
& \hspace*{3em} {\mbox{}}
\to \dirlim_n
  \bigl( (A \fr C_{n-1} \fr C_{n-2} \fr \cdots \fr C_1 )_{n \in \N},
\, (\widetilde{\alpha}_{n, m})_{m \leq n} \bigr)
\end{split}
\]
and
\[
\begin{split}
& \bt_{\I, l} \colon
C_{l} \fr C_{l-1} \fr \cdots \fr C_1
\\
& \hspace*{3em} {\mbox{}}
  \to \dirlim_n
  \bigl( (C_{n} \fr C_{n-1} \fr \cdots \fr C_1 )_{n \in \N},
     \, (\bt_{n, m})_{m \leq n} \bigr).
\end{split}
\]

For any field~$K$ (we will use $K = \C$ and $K = \Q [i]$),
and for $n \in \N$,
we let $K \langle t_1, t_2, \ldots, t_n \rangle$ be the $K$-algebra of
polynomials in the noncommuting variables $t_1, t_2, \ldots, t_n$,
and similarly with $K \langle t_1, t_2, \ldots \rangle$.
We make the obvious identifications
\[
K \langle t_1 \rangle
\SQ K \langle t_1, t_2 \rangle
\SQ K \langle t_1, t_2, t_3 \rangle
\SQ \cdots
\SQ K \langle t_1, t_2, \ldots \rangle,
\]
with
\[
K \langle t_1, t_2, \ldots \rangle
= \bigcup_{n = 1}^{\I} K \langle t_1, t_2, \ldots, t_n \rangle.
\]
We evaluate $q \in K \langle t_1, t_2, \ldots, t_n \rangle$
at elements $b_1, b_2, \ldots, b_n$ in a \uca~$B$ in the obvious way,
and write $q (b_1, b_2, \ldots, b_n)$
or $q \bigl( (b_k)_{1 \leq k \leq n} \bigr)$.

We will choose inductively sequences of finite sets
\begin{equation}\label{Eq_5Y24_EnFnIn}
E_1 \subseteq E_2 \subseteq \cdots \subseteq A
\end{equation}
and, for $j \in \N$,
\begin{equation}\label{Eq_6524_33b}
F_1^{(j)} \subseteq F_2^{(j)} \subseteq
     \cdots \subseteq C_j,
\end{equation}
numbers $k (n) \in \N$ with $k (1) \leq k (2) \leq \cdots$,
and finite sets
$Q_n \subseteq
 \Q [i] \bigl\langle t_1, t_2, \ldots, t_{k (n)} \bigr\rangle$
for $n \in \N$,
such that
\begin{equation}\label{Eq_5X26_Incr}
\bigcup_{n = 1}^{\I} Q_n = \Q [i] \langle t_1, t_2, \ldots \rangle,
\qquad
{\overline{\bigcup_{n = 1}^{\I} E_n}} = A,
\end{equation}
and, for $j \in \N$,
\begin{equation}\label{Eq_6524_34B}
{\overline{\bigcup_{n = 1}^{\I} F_n^{(j)}}} = C_j,
\end{equation}
such that for all $n \in \N$,
\begin{equation}\label{Eq_5X27_kngn}
k (n) \geq n,
\end{equation}
for $j = 1, 2, \ldots, n$,
recalling from~(\ref{Eq_5Y17_St}) that
$\ph_j = \zt_j \circ \te \colon A \to C_j$,
\begin{equation}\label{Eq_5X26_Q_inc}
\ph_j (E_n) \subseteq F_n^{(j)},
\end{equation}
and, for $n \geq 2$,
\begin{equation}\label{Eq_5X26_ph}
Q_{n - 1} \subseteq Q_{n}.
\end{equation}

Given such choices, which are made later in the proof,
for $m = 1, 2, \ldots, n$
we define finite subsets
\[
\Xi_n^{(m)} \subseteq A \fr C_{m-1} \fr C_{m-2}
\fr \cdots \fr C_1
\]
and
\[
\Sigma_n^{(m)} \subseteq C_{m} \fr C_{m-1} \fr \cdots \fr C_1
\]
by
\begin{equation}\label{Eq_5x23_XinDfn}
\begin{split}
& \Xi_n^{(m)}
= \Bigl\{
q (b_1, b_2, \ldots, b_{k (n)} ) \mid
\\
& \hspace*{4em}
{\mbox{$q \in Q_n$ and $b_1, b_2, \ldots, b_{k (n)}
\in \io_m^{(m)} (E_n)
\cup \bigcup_{j = 1}^{m-1} \io_j^{(m)} (F_n^{(j)})$}}
\Bigr\}
\end{split}
\end{equation}
and
\begin{equation}\label{Eq_5x23_SmnDfn}
\begin{split}
& \Sm_n^{(m)}
= \Bigl\{
q (b_1, b_2, \ldots, b_{k (n)} ) \mid
\\
& \hspace*{6em}
{\mbox{$q \in Q_n$ and $b_1, b_2, \ldots, b_{k (n)}
\in \bigcup_{j = 1}^{m} \lambda_j^{(m)} (F_n^{(j)})$}}
\Bigr\}.
\end{split}
\end{equation}
Further set
\begin{equation}\label{Eq_5Z19_nn}
\Xi_n = \Xi_n^{(n)} \andeqn \Sm_n = \Sm_n^{(n)}.
\end{equation}

For $m \in \N$ it will follow
from (\ref{Eq_5Y24_EnFnIn}), (\ref{Eq_6524_33b}),
and~(\ref{Eq_5X26_ph}) that
\begin{equation}\label{Eq_5Y24_Xi_incr}
\Xi_m^{(m)} \subseteq \Xi_{m + 1}^{(m)} \subseteq
\Xi_{m + 2}^{(m)} \subseteq \cdots \subseteq A \fr C_{m-1} \fr C_{m-2}
\fr \cdots \fr C_1
\end{equation}
and from~(\ref{Eq_5X26_Incr}) and (\ref{Eq_6524_34B}) that
\begin{equation}\label{Eq_5Y24_Xi_mn}
{\overline{\bigcup_{n = m}^{\I} \Xi_n^{(m)}}} = A \fr  C_{m-1} \fr
C_{m-2} \fr \cdots \fr C_1 .
\end{equation}
Similarly,
\begin{equation}\label{Eq_5Y24_Sm_incr}
\Sm_m^{(m)} \subseteq \Sm_{m + 1}^{(m)} \subseteq
\Sm_{m + 2}^{(m)} \subseteq \cdots \subseteq  C_{m} \fr C_{m-1} \fr
\cdots \fr C_1
\end{equation}
and it follows from~(\ref{Eq_5x23_SmnDfn}), (\ref{Eq_6524_34B}),
and the first part of~(\ref{Eq_5X26_Incr}) that
\begin{equation}\label{Eq_5Y24_Sm_mn}
{\overline{\bigcup_{n = m}^{\I} \Sm_n^{(m)}}} =  C_{m} \fr C_{m-1} \fr
\cdots \fr C_1.
\end{equation}
Moreover, recalling (\ref{Eq_6524_idn}) for the definition
of $\id_{n - 1}$,
the relation~(\ref{Eq_5X26_Q_inc}) implies that
\[
(\ph_n \fr \id_{n - 1})
\left( \io_n^{(n)} (E_n) \cup
\bigcup_{j = 1}^{n-1} \io_j^{(n)} (F_n^{(j)}) \right)
\SQ \bigcup_{j = 1}^{n} \lambda_j^{(n)} (F_n^{(j)}),
\]
so (\ref{Eq_5x23_XinDfn}) and (\ref{Eq_5x23_SmnDfn}) imply
\begin{equation}\label{Eq_5X26_3star}
(\ph_n \fr \id_{n - 1}) (\Xi_n) \subseteq \Sm_n.
\end{equation}
Also, recalling (\ref{Eq_6524_iotan}) for the definition
of $\io_{n - 1}$ and using~(\ref{Eq_6524_33b}),
\[
\io_{n-1}
\left( \bigcup_{j = 1}^{n - 1} \lambda_j^{(n - 1)} (F_{n - 1}^{(j)}) \right)
= \bigcup_{j = 1}^{n-1} \io_j^{(n)} (F_{n - 1}^{(j)} )
\SQ \io_n^{(n)} (E_n) \cup \bigcup_{j = 1}^{n-1} \io_j^{(n)}(F_n^{(j)})
\]
and
\[
\beta_{n,n-1}
\left( \bigcup_{j = 1}^{n - 1} \ld_j^{(n - 1)} (F_{n - 1}^{(j)}) \right)
= \bigcup_{j = 1}^{n-1} \ld_j^{(n)} (F_{n - 1}^{(j)})
\SQ \bigcup_{j = 1}^{n} \ld_j^{(n)} (F_n^{(j)}),
\]
whence, by~(\ref{Eq_5X26_ph}),
\begin{equation}\label{Eq_5X26__4st}
\io_{n - 1} ( \Sm_{n - 1}) \subseteq \Xi_{n}
\end{equation}
and
\begin{equation}\label{Eq_5Y28_LdI}
\beta_{n,n-1} (\Sm_{n - 1}) \SQ \Sm_n.
\end{equation}

Since the elements of $\Xi_n$ are polynomials
in the images of the sets $E_n$ and $F_n^{(j)}$
in the free factors of $A \fr C_{n - 1} \fr C_{n-2} \fr \cdots \fr C_1$,
there will necessarily be $\delta_n > 0$
such that for any C*-algebra $B$, the following holds:
\begin{enumerate}
\setcounter{enumi}{\value{TmpEnumi}}
\item\label{5X23_A}
If $\alpha, \beta \colon A \fr  C_{n - 1} \fr C_{n-2} \fr \cdots \fr
C_1 \to B$
are homomorphisms which satisfy
\[
\bigl\| \alpha \bigl( \iota_n^{(n)} (a) \bigr)
- \beta \bigl( \iota_n^{(n)} (a) \bigr) \bigr\|
< \delta_n
\]
for any $a \in E_n$ and
\[
\bigl\| \alpha \bigl( \iota_k^{(n)} (c) \bigr)
- \beta \bigl( \iota_k^{(n)} (c) \bigr) \bigr\|
< \delta_n
\]
for any $c \in F_n^{(k)}$ and any $k \in \{1,2, 3, \ldots, n-1\}$, then
\[
\| \alpha (x) - \beta (x) \| < \eps_n
\]
for any $x \in \Xi_n$.
\setcounter{TmpEnumi}{\value{enumi}}
\end{enumerate}

Let $a_1, a_2, a_3, \ldots$ be a dense sequence in~$A$
and for $j \in \N$ let $c_1^{(j)}, c_2^{(j)}, c_3^{(j)}, \ldots$
be a dense sequence in~$C_j$.
Let $p_1, p_2, p_3, \ldots$ be a sequence
in $\Q [i] \langle t_1, t_2, \ldots \rangle$ such that for every $n \in \N$
we have $p_n \in \Q [i] \bigl\langle t_1, t_2, \ldots, t_n \bigr\rangle$,
and such that
\[
\{ p_1, p_2, p_3, \ldots \} = \Q [i] \langle t_1, t_2, \ldots \rangle.
\]

For $n, j \in \N$ with $n < j$, we set $F_n^{(j)} =
\{c_1^{(j)},c_2^{(j)},\ldots,c_n^{(j)}\}$.
When $n \geq j$, those sets will
be defined recursively below.

We now choose $k (1)$, $E_1$, $F_1^{(1)}$, and $Q_1$.
The sets $\Xi_1$ and $\Sm_1$ will then be given by (\ref{Eq_5x23_XinDfn})
and~(\ref{Eq_5x23_SmnDfn}),
we will have $\Xi_1^{(1)} = \Xi_1$, and $\dt_1$
will be chosen to satisfy (\ref{5X23_A}).
We will then verify the hypotheses (\ref{5X23_phnA}),
(\ref{5X23_phn_2}), (\ref{5X23_psinB}), and~(\ref{5X23_psin_2})
of \cref{thm_Elliott_intertwining} for $n = 1$.
Set $k (1) = 1$.
With $t_1$ denoting the monomial
$t_1 \in \Q [i] \langle t_1, t_2, \ldots \rangle$,
set
\[
Q_1 = \{ t_1, p_1 \},
\qquad
E_1 = \{ a_1 \},
\andeqn
F_1^{(1)} = \{ c_1^{(1)}, \varphi_1 (a_1) \}.
\]
The relations (\ref{Eq_5X27_kngn}) and~(\ref{Eq_5X26_Q_inc})
hold by construction, and (\ref{Eq_5X26_ph}) is vacuous.

Now define $\Xi_1 \subseteq A$ by~(\ref{Eq_5x23_XinDfn})
and $\Sm_1 \subseteq C_1$ by~(\ref{Eq_5x23_SmnDfn}).
Set $\Xi_1^{(1)} = \Xi_1$.
Choose $\dt_1 > 0$ such that (\ref{5X23_A}) holds.
Condition~(\ref{5X23_phn_2}) of
\cref{thm_Elliott_intertwining} for $n = 1$
follows from~(\ref{Eq_5X26_3star}).
Conditions (\ref{5X23_phnA}), (\ref{5X23_psinB}), and~(\ref{5X23_psin_2})
of \cref{thm_Elliott_intertwining} are vacuous for $n = 1$.

We next choose~$\widetilde{\alpha}_{2,1}$
and verify conditions (\ref{5X23_Acomm}) and~(\ref{5X23_SmD})
of \cref{thm_Elliott_intertwining} for $n = 1$.
Use~(\ref{I_5Y17_aue}) to choose a unitary $u_1 \in A \fr C_1$
such that for all $a \in E_1$, we have
\begin{equation}\label{Eq_5Y24_aue_1}
\bigl\| u_1 \bigl( \chi_2^{(1)} \circ \varphi_1 \bigr) (a) u_1^*
- \chi_1^{(1)} (a) \bigr\| < \dt_1.
\end{equation}
Set $\widetilde{\alpha}_{2,1} = \Ad (u_1^*) \circ \chi_1^{(1)}$.
Using~(\ref{5X23_A}) for $n = 1$ and~(\ref{Eq_5Y24_aue_1}),
for all $x \in \Xi_1$ we have
\begin{equation}\label{Eq_5X27_iou2}
\bigl\| \widetilde{\alpha}_{2,1} (x)
- \bigl( \chi_2^{(1)}  \circ \ph_1 \bigr) (x) \bigr\| < \ep_1.
\end{equation}
This is
\cref{thm_Elliott_intertwining}(\ref{5X23_Acomm}) for $n = 1$.
\cref{thm_Elliott_intertwining}(\ref{5X23_SmD}) holds for $n = 1$
because in fact $(\ph_2 \fr \id_{C_1}) \circ \io_1 = \beta_{2,1}$.

We proceed in a similar manner,
but first spell out the next stage for concreteness,
as extra complications occur at this point.
We construct $k (2)$, $E_2$, $F_2^{(1)}$, $F_2^{(2)}$, and $Q_2$.
We will then get $\Xi_2^{(1)}$, $\Xi_2^{(2)}$, $\Xi_2 = \Xi_2^{(2)}$,
and $\Sm_2$ from (\ref{Eq_5x23_XinDfn}), (\ref{Eq_5x23_SmnDfn}),
and~(\ref{Eq_5Z19_nn}),
and $\dt_2$ from (\ref{5X23_A}).

Choose $k_0 (2) \geq \max (k (1), 2)$ such that for every
$x \in \Xi_1 = \Xi_1^{(1)}$ there are
\[
q_x \in \Q [i] \langle t_1, t_2, \ldots, t_{2 k_0 (2)} \rangle,
\]
\[
b_{1, x}^{(1)}, b_{2, x}^{(1)}, \ldots, b_{k_0 (2), x}^{(1)} \in A,
\sandeqn
b_{1, x}^{(2)}, b_{2, x}^{(2)}, \ldots, b_{k_0 (2), x}^{(2)} \in C_1
\]
such that
\begin{equation}\label{Eq_5X27_qxD}
\Bigl\|
q_x \bigl( \bigl(
\chi_1^{(1)} \bigl( b_{j, x}^{(1)} \bigr) \bigr)_{1 \leq j \leq k_0
(2)},
\, \bigl( \chi_2^{(1)} \bigl( b_{j, x}^{(2)}
\bigr) \bigr)_{1 \leq j \leq k_0 (2)} \bigr)
- \widetilde{\alpha}_{2,1} (x) \Bigr\|
< \ep_2.
\end{equation}
Set $k (2) = 2 k_0 (2)$,
\begin{equation}\label{Eq_5X27_Q2D}
Q_2 = Q_1 \cup \{ p_2 \} \cup \{ q_x \mid x \in \Xi_1 \},
\end{equation}
\begin{equation}\label{Eq_5X27_E2Dfn}
E_2 = E_1 \cup \{ a_2 \}
\cup \bigl\{ b_{j, x}^{(1)} \mid
{\mbox{$j = 1, 2, \ldots, k_0 (2)$ and $x \in \Xi_1$}} \bigr\},
\end{equation}
\begin{equation}\label{Eq_F2_1_defn}
F_2^{(1)} = F_1^{(1)} \cup \{c_2^{(1)}\} \cup \ph_1 (E_2)
\cup \bigl\{ b_{j, x}^{(2)} \mid
{\mbox{$j = 1, 2, \ldots, k_0 (2)$ and $x \in \Xi_1$}} \bigr\},
\end{equation}
and
\begin{equation}\label{Eq_F2_2_defn}
F_2^{(2)} =  F_1^{(2)} \cup \{c_2^{(2)}\} \cup \ph_2 (E_2) .
\end{equation}
These choices give (\ref{Eq_5X27_kngn}), (\ref{Eq_5X26_Q_inc}),
and~(\ref{Eq_5X26_ph}) for $n = 2$.
Then define $\Xi_2^{(1)} \SQ A$ and $\Xi_2^{(2)} \SQ A \fr C_1$
by~(\ref{Eq_5x23_XinDfn}), set $\Xi_2 = \Xi_2^{(2)}$
by~(\ref{Eq_5Z19_nn}),
and define $\Sm_2 \SQ C_2 \fr C_1$ by~(\ref{Eq_5x23_SmnDfn}).
Choose $\dt_2 > 0$ following (\ref{5X23_A}).

We now verify the hypotheses
(\ref{5X23_phnA})--(\ref{5X23_psin_2}) of \cref{thm_Elliott_intertwining}
for $n = 2$.
They call for various distances to be less than~$\ep_2$.
In three of the four cases, the distance will actually be zero.
For \cref{thm_Elliott_intertwining}(\ref{5X23_phnA}), let $x \in \Xi_1$.
The element
\[
{\widetilde{x}}
= q_x \bigl( \bigl(
\chi_1^{(1)} \bigl( b_{j, x}^{(1)} \bigr) \bigr)_{1 \leq j \leq k_0 (2)},
\, \bigl( \chi_2^{(1)} \bigl( b_{j, x}^{(2)}
\bigr) \bigr)_{1 \leq j \leq k_0 (2)} \bigr)
\]
is in $\Xi_2$ by (\ref{Eq_5x23_XinDfn}), (\ref{Eq_5Z19_nn}),
(\ref{Eq_5X27_Q2D}), (\ref{Eq_5X27_E2Dfn}), and~(\ref{Eq_F2_1_defn}),
and satisfies
\[
\bigl\| \widetilde{\alpha}_{2,1} (x) - {\widetilde{x}} \bigr\| < \ep_2
\]
by~(\ref{Eq_5X27_qxD}).
For \cref{thm_Elliott_intertwining}(\ref{5X23_phn_2}), let $x \in \Xi_2$.
We have
\[
(\ph_2 \fr \id_{C_1}) (\Xi_2) \subseteq \Sm_2
\]
by~(\ref{Eq_5X26_3star}).
Therefore in fact $y = (\ph_2 \fr \id_{C_1}) (x) \in \Sm_2$.
For \cref{thm_Elliott_intertwining}(\ref{5X23_psinB}), let $y \in \Sm_1$.
Set ${\widetilde{y}} = \beta_{2,1} (y)$.
Then in fact ${\widetilde{y}} \in \Sm_2$ by~(\ref{Eq_5Y28_LdI}).
For \cref{thm_Elliott_intertwining}(\ref{5X23_psin_2}), let $y \in \Sm_1$.
Set $x = \io_1 (y)$.
Then in fact $x \in \Xi_2$ by~(\ref{Eq_5X26__4st}).

%%%%%%%%%%%%% 20.5 EDITED UNTIL HERE

Now use~(\ref{I_5Y17_aue}) to choose a unitary $u_2 \in A \fr C_2$
such that for any $a \in E_2$ we have
\[
\bigl\| u_2 \bigl( \chi_2^{(2)} \circ \varphi_2 \bigr) (a) u_2^* -
\chi_1^{(2)} (a) \bigr\|
< \delta_2.
\]
Observe now that
\[
\chi_1^{(2)} \fr \id_{C_1} = \alpha_{3, 2}.
\]
Define $\widetilde{\alpha}_{3, 2} \colon A \fr C_1 \to A \fr C_2 \fr C_1$ by
\[
\widetilde{\alpha}_{3, 2}
= ( \Ad (u_2^*) \fr \id_{C_1} ) \circ \alpha_{3, 2}.
\]
Let $x \in \Xi_2$.
Since $\delta_2$ satisfies~(\ref{5X23_A}),
it follows that
\[
\left\| \bigl[ \bigl(  \chi_2^{(2)} \circ \varphi_2 \bigr)
\fr \id_{C_1} \bigr] (x)
-\widetilde{\alpha}_{3, 2} (x) \right\| < \eps_2.
\]
Since $\chi_2^{(2)} \fr \id_{C_1} = \io_{2}$, we thus have
\[
\| (\io_{2} \circ [\ph_2 \fr \id_{C_1}]) (x)
- \widetilde{\alpha}_{3, 2} (x) \|
< \ep_2.
\]
This last statement is condition~(\ref{5X23_Acomm})
of \cref{thm_Elliott_intertwining}.
Condition~(\ref{5X23_SmD}) is immediate by~(\ref{Eq_6524_lcomm}).

We now give the general construction for $n \geq 3$.
One further complication occurs here,
involving the sets $\Xi_{n - 1}^{(m)}$ for $m < n - 1$.
Suppose $k (n - 1)$, $E_{n - 1}$, $F_{n - 1}^{(j)}$
for $j = 1, 2, \ldots, n - 1$, $Q_{n - 1}$,
and ${\widetilde{\alpha}}_{n,n-1}$ have been chosen,
and hence also $\Xi_{n - 1}$, $\Xi_{n - 1}^{(m)}$ for $m \leq n - 1$,
$\Sm_{n - 1}$, and $\Sm_{n - 1}^{(m)}$ for $m \leq n - 1$.

Choose $k_0 (n) \geq \max (k (n - 1), n)$ such that for every
$m \in \{ 1, 2, \ldots, n - 1 \}$ and $x \in \Xi_{n - 1}^{(m)}$
there are
\[
q_x^{(m)} \in \Q [i] \langle t_1, t_2, \ldots, t_{n k_0 (n)} \rangle,
\quad
b_{1, x}^{(n, m)}, b_{2, x}^{(n, m)}, \ldots, b_{k_0 (n), x}^{(n, m)}
\in A,
\]
and for $j = 1, 2, \ldots, n - 1$ we have
\[
b_{1, x}^{(j, m)}, b_{2, x}^{(j, m)}, \ldots, b_{k_0 (n), x}^{(j, m)}
\in C_j
\]
such that
\begin{equation}\label{Eq_5Y24_qxD}
\Bigl\|
q_x^{(m)} \bigl( \bigl(
\io_n^{(n)} \bigl( b_{s, x}^{(n, m)} \bigr) \bigr)_{1 \leq s \leq k_0
(n)},
\, \ldots, \, \bigl( \io_1^{(n)} \bigl( b_{s, x}^{(1, m)}
\bigr) \bigr)_{1 \leq s \leq k_0 (n)} \bigr)
- \widetilde{\af}_{n, m} (x) \Bigr\|
< \ep_n.
\end{equation}

Set $k (n) = n k_0 (n)$,
\begin{equation}\label{Eq_5Y24_Q2D}
Q_n
= Q_{n - 1}
\cup \{ p_n \}
\cup
\bigl\{ q_x^{(m)} \mid {\mbox{$1 \leq m \leq n - 1$
and $x \in \Xi_{n - 1}^{(m)}$}} \bigr\},
\end{equation}
\begin{equation}\label{Eq_5Y24_E2Dfn}
\begin{split}
E_n
& = E_{n - 1} \cup \{ a_n \}
\\
& \hspace*{1em} {\mbox{}}
\cup \bigl\{ b_{s, x}^{(n, m)} \mid
{\mbox{$1 \leq s \leq k_0 (n)$, $1 \leq m \leq n - 1$,
and $x \in \Xi_{n - 1}^{(m)}$}} \bigr\},
\end{split}
\end{equation}
and for $j = 1, 2, \ldots, n - 1$,
\begin{equation}\label{Eq_5Y24_F2_defn}
\begin{split}
F_n^{(j)}
& = F_{n - 1}^{(j)} \cup \{ c_n^{(j)} \} \cup \ph_j (E_n)
\\
& \hspace*{1em} {\mbox{}}
\cup \bigl\{ b_{s, x}^{(j, m)} \mid
{\mbox{$1 \leq s \leq k_0 (n)$, $1 \leq m \leq n - 1$,
and $x \in \Xi_{n - 1}^{(m)}$}} \bigr\},
\end{split}
\end{equation}
and
\[
F_n^{(n)} = F_{n - 1}^{(n)} \cup \{ c_n^{(n)} \} \cup \ph_n (E_n).
\]
These choices give (\ref{Eq_5X27_kngn}), (\ref{Eq_5X26_Q_inc}),
and~(\ref{Eq_5X26_ph}) for~$n$.
For $m = 1, 2, \ldots, n$ define
\[
\Xi_{n}^{(m)} \SQ A \fr C_{m - 1} \fr C_{m-2} \fr \cdots \fr C_1
\]
by~(\ref{Eq_5x23_XinDfn}) and
\[
\Sm_{n}^{(m)} \SQ C_m \fr C_{m - 1} \fr \cdots \fr C_1
\]
by~(\ref{Eq_5x23_SmnDfn}).
Then define
\[
\Xi_n \SQ A \fr C_{n - 1} \fr C_{n-2} \fr \cdots \fr C_1
\quad {\mbox{and}} \quad
\Sm_n \SQ  C_n \fr C_{n - 1} \fr \cdots \fr C_1
\]
by~(\ref{Eq_5Z19_nn}).
Choose $\dt_n > 0$ following (\ref{5X23_A}).

We check the hypotheses
(\ref{5X23_phnA})--(\ref{5X23_psin_2}) of \cref{thm_Elliott_intertwining}
for~$n$.
For~(\ref{5X23_phnA}), let $x \in \Xi_{n - 1}$.
Then
\[
\widetilde{x}
= q_x^{(n - 1)} \bigl( \bigl(
\io_n^{(n)}
\bigl( b_{s, x}^{(n, \, n - 1)} \bigr) \bigr)_{1 \leq s \leq k_0 (n)},
\, \ldots, \, \bigl( \io_1^{(n)} \bigl( b_{s, x}^{(1, \, n - 1)}
\bigr) \bigr)_{1 \leq s \leq k_0 (n)} \bigr)
\]
satisfies
$\| \widetilde{\alpha}_{n, n - 1} (x) - \widetilde{x} \|
< \ep_n$.
Also, for $s = 1, 2, \ldots, k_0 (n)$,
the elements $b_{s, x}^{(n, \, n - 1)}$ are in $E_n$
by~(\ref{Eq_5Y24_E2Dfn}),
the elements $b_{s, x}^{(j, \, n - 1)}$, for $j = 1, 2, \ldots, n-1$,
are in $F_n^{(j)}$ by~(\ref{Eq_5Y24_F2_defn}),
and $q_x^{(n - 1)} \in Q_n$ by~(\ref{Eq_5Y24_Q2D}).
Therefore $\widetilde{x} \in \Xi_n$ by~(\ref{Eq_5x23_XinDfn}).
Conditions (\ref{5X23_phn_2}), (\ref{5X23_psinB}), and~(\ref{5X23_psin_2})
follow from $(\ph_n \fr \id_{n-1} ) (\Xi_n) \subseteq \Sm_n$,
which is~(\ref{Eq_5X26_3star}),
$\beta_{n,n-1} (\Sm_{n - 1}) \SQ \Sm_n$,
which is~(\ref{Eq_5Y28_LdI}),
and $\io_{n - 1} ( \Sm_{n - 1}) \subseteq \Xi_{n}$,
which is~(\ref{Eq_5X26__4st}).

We claim that the following also holds:
\begin{enumerate}
\setcounter{enumi}{\value{TmpEnumi}}
\item\label{5X28_Item4}
Whenever $m, n \in \N$ satisfy $m \leq n - 1$,
and for every $x \in \Xi_{n - 1}^{(m)}$,
there is $\widetilde{x} \in \Xi_n$ such that
$\| \widetilde{x} - \widetilde{\af}_{n, m } (x) \| < \ep_n$.
\end{enumerate}
(For $n = 2$, there was only one case, $m = 1$, which was just the
verification of \cref{thm_Elliott_intertwining}(\ref{5X23_phnA}).)
To prove the claim, take $\widetilde{x}$ to be given by
\[
\widetilde{x}
= q_x^{(m)} \bigl( \bigl(
\io_n^{(n)} \bigl( b_{s, x}^{(n, m)} \bigr) \bigr)_{1 \leq s \leq k_0 (n)},
\, \ldots, \, \bigl( \io_1^{(n)} \bigl( b_{s, x}^{(1, m)}
\bigr) \bigr)_{1 \leq s \leq k_0 (n)} \bigr),
\]
the element used in~(\ref{Eq_5Y24_qxD}),
giving $\| \widetilde{x} - \widetilde{\af}_{n, m } (x) \| < \ep_n$.
We have $\widetilde{x} \in \Xi_n$ by
(\ref{Eq_5Y24_Q2D}), (\ref{Eq_5Y24_E2Dfn}),
(\ref{Eq_5Y24_F2_defn}), (\ref{Eq_5x23_XinDfn}), and~(\ref{Eq_5Z19_nn}).
This proves~(\ref{5X28_Item4}).

Next, use~(\ref{I_5Y17_aue}) to choose
$u_n \in A \fr C_n$ such that for any $a \in E_{n}$ we have
\[
\bigl\| u_n
\bigl( \chi_2^{(n)} \circ \varphi_n \bigr) (a) u_n^* -
\chi_1^{(n)} (a) \bigr\|
< \delta_n \, .
\]
Recalling~(\ref{Eq_6524_idn}) for the definition of $\id_{n - 1}$,
define
\[
\widetilde{\alpha}_{n + 1, n}
\colon A \fr C_{n-1} \fr C_{n-2} \fr \cdots \fr C_1 \to A \fr C_{n} \fr
C_{n-1} \fr \cdots \fr C_1
\]
by
\[
\widetilde{\alpha}_{n + 1, n}
 = ( \Ad (u_n^*) \fr \id_{n - 1} ) \circ \alpha_{n + 1, n}.
\]
For convenience of notation, define
\[
\om_n \colon  A \fr C_{n-1} \fr C_{n-2} \fr \cdots \fr C_1
  \to A \fr C_{n} \fr C_{n-1} \fr \cdots \fr C_1
\]
by
\begin{equation}\label{Eq_6124_Omn}
\om_n = \bigl( \chi_2^{(n)} \circ \varphi_n \bigr) \fr \id_{n - 1}.
\end{equation}
A computation shows that for $j = 1, 2, \ldots, n-1$ and $c \in C_j$, we
have
\[
\bigl( \om_n \circ \io_j^{(n)} \bigr) (c)
= \io_{j}^{(n + 1)} (c)
= \bigl( \widetilde{\alpha}_{n+1,n} \circ \io_j^{(n)} \bigr) (c).
\]
Also, for $a \in A$, the element
$\om_n \bigl( \io_n^{(n)}(a) \bigr)$ is
$\chi_2^{(n)} (\ph_n (a))$, viewed inside the first two free factors
$A \fr C_n$ of $A \fr C_n \fr C_{n-1} \fr \cdots \fr C_1$.
Thus, if $a \in E_n$, we have
\[
\bigl\| \om_n \bigl( \io_n^{(n)} (a) \bigr)
- \widetilde{\alpha}_{n+1, n}
\bigl( \io_n^{(n)} (a) \bigr) \bigr\| < \dt_n.
\]
Since $\delta_{n}$ satisfies~(\ref{5X23_A}),
and recalling~(\ref{Eq_6124_Omn}), for all $x \in \Xi_n$ we have
\[
\bigl\| \bigl[  \iota_n \circ \bigl(\varphi_n
\fr \id_{n-1} \bigr) \bigr] (x)
- \widetilde{\alpha}_{n+1, n} (x) \bigr\|
< \eps_{n}.
\]
We have proved condition
(\ref{5X23_Acomm}) of \cref{thm_Elliott_intertwining}.
As before, \cref{thm_Elliott_intertwining}(\ref{5X23_SmD})
is immediate from~(\ref{Eq_6524_lcomm}).

This completes the construction of the modified version of the diagram.
It remains to prove conditions
(\ref{5X23_Xin}) and~(\ref{5X23_Smn}) of \cref{thm_Elliott_intertwining}.

For \cref{thm_Elliott_intertwining}(\ref{5X23_Xin}),
let $m \in \N$, let $x \in A \fr C_{m-1} \fr C_{m-2} \fr \cdots \fr C_1 $,
and let $\ep > 0$.
By (\ref{Eq_5Y24_Xi_incr}) and~(\ref{Eq_5Y24_Xi_mn}),
there are $n_0 \geq m$ and $x_0 \in \Xi_{n_0}^{(m)}$ such that
$\ep_{n_0} \leq \frac{\ep}{2}$ and $\| x_0 - x \| < \frac{\ep}{2}$.
Let $n > n_0$.
Use~(\ref{5X28_Item4}) to choose $\widetilde{x} \in \Xi_n$
such that $\| \widetilde{x} - \widetilde{\af}_{n, m } (x_0) \| < \ep_n$.
Then
\[
\| \widetilde{x} - \widetilde{\af}_{n, m } (x) \|
\leq \| \widetilde{x} - \widetilde{\af}_{n, m } (x_0) \| + \| x_0 - x \|
< \ep_n + \frac{\ep}{2}
\leq \ep,
\]
as desired.

For \cref{thm_Elliott_intertwining}(\ref{5X23_Smn}),
let $m \in \N$, let $y \in C_{m} \fr C_{m-1} \fr \cdots \fr C_1 $, and
let $\ep > 0$.
Since for any $j \in \N$ we have $F_1^{(j)} \SQ F_2^{(j)} \SQ \cdots$ and
$\bigcup_{n = 1}^{\I} F_n^{(j)}$ is
dense in~$C_j$ (by (\ref{Eq_6524_33b}) and~(\ref{Eq_6524_34B})),
there are $r, n_0 \in \N$,
$q \in \Q [i] \langle t_1, t_2, \ldots, t_{r m} \rangle$,
and $b_l^{(j)} \in F_{n_0}^{(j)}$
for $l = 1, 2, \ldots, r$ and $j = 1, 2, \ldots, m$,
such that the element
\[
y_0
= q \bigl( \bigl(
\ld_m^{(m)} \bigl( b_{l}^{(m)} \bigr) \bigr)_{1 \leq l \leq r},
\, \ldots, \, \bigl( \ld_1^{(m)} \bigl( b_{l}^{(1)}
\bigr) \bigr)_{1 \leq l \leq r} \bigr)
\]
satisfies $\| y_0 - y \| < \ep$.
Choose $n \in \N$ so large that
\[
n \geq \max (m, n_0, rm) \andeqn q \in \{ p_1, p_2, \ldots, p_n \}.
\]
Then $k (n) \geq n \geq rm$ and $q \in Q_n$.
It now follows from~(\ref{Eq_5x23_SmnDfn}) and $b_l^{(j)} \in F_{n}^{(j)}$
that $\bt_{n, m} (y_0) \in \Sm_n$.
Since $\| y_0 - y \| < \ep$, we have
$\| \bt_{n, m} (y_0) - \bt_{n, m} (y) \| < \ep$.
This proves~(\ref{5X23_Smn}).

By \cref{thm_Elliott_intertwining}, we have
\[
\begin{split}
&
\dirlim
 \bigl( ( A \fr C_{n-1} \fr C_{n-2} \fr \cdots \fr C_1 )_{n \in \N},
 ( \widetilde{\alpha}_{n + 1, \, n})_{n \in \N} \bigr)
\\
& \hspace*{3em} {\mbox{}}
 \cong \dirlim
 \bigl( ( C_n \fr C_{n-1} \fr \cdots \fr C_1)_{n \in \N},
    ( \beta_{n + 1, \, n})_{n \in \N} \bigr)  \,.
\end{split}
\]
It remains only to identify the left hand direct limit with the direct limit
using the original maps $\alpha_{n+1,n}$.
For $n \in \N$, set
\[
\gamma_{n+1} = \Ad (u_n^*) \fr \id_{n - 1}
\in \Aut (A \fr C_n \fr C_{n-1} \fr \cdots \fr C_1).
\]
Thus $\widetilde{\alpha}_{n+1,n}=\gamma_{n+1} \circ \alpha_{n+1,n}$.
For $m \geq k > 1$, let $\beta_m^{(k)}$ be the automorphism of
$A \fr C_{m-1} \fr \cdots \fr C_1$ which acts as
$\Ad (u_{k-1}^*)$ on the factors generated by $A$ and
$C_{k-1}$ and fixes all other free factors.
Then the hypotheses of \cref{lem_isomorphism_different_systems_0} are
satisfied, so
\[
\begin{split}
&
\dirlim
 \bigl( ( A \fr C_{n-1} \fr C_{n-2} \fr \cdots \fr C_1 )_{n \in \N},
 ( \widetilde{\alpha}_{n + 1, \, n})_{n \in \N} \bigr)
\\
& \hspace*{3em} {\mbox{}}
\cong
\dirlim
 \bigl( ( A \fr C_{n-1} \fr C_{n-2} \fr \cdots \fr C_1 )_{n \in \N},
 ( \alpha_{n + 1, \, n})_{n \in \N} \bigr).
\end{split}
\]
The right hand direct limit above is $A \fr \bigastr_{n=1}^{\infty} C_n$.
This concludes the proof.
\end{proof}

\begin{proof}[Proof of \cref{thm_main_RRZ}]
We describe the modifications
to the proof of \cref{thm_main} needed to replace $A$ with $C (X)$.
For each block $I_n$ as in the hypothesis,
define $D_n = \bigastr_{k \in I_n} C_k$.
Use $D_n$ in place of $C_n$.
By \cref{lemma_Infinite_Free_Product_Selfless},
the algebras $D_n$ and also $C (X) \fr D_n$
are simple, have unique tracial states, have stable rank one,
and have strict comparison.
Furthermore, there is a unital embedding $\zt_n \colon \Zh \to D_n$.
The algebra $C (X) \fr D_n$ has real rank zero by
\cref{C_2622_RanK0}(\ref{I_C_2622_RanK0_RRZ}).
Moreover, $C (X) \fr D_n$ is exact by
exactness of the free factors and \cite[Corollary~4.3]{DkmShk}.

By \cite[Corollary C]{abstract_classification},
there exists a tracial state preserving embedding
$\te \colon C (X) \to \Zh$.
Since $\Zh$ and $D_n$ have unique \tst{s},
$\zt_n$ is trace preserving.
Therefore so is $\zt_n \circ \te$.
We use \cite[Theorem 4.8]{Matui} for the class~$\cT'$.
This class is defined in \cite[Definition 2.2]{Matui},
and the properties proved in the previous paragraph
show that $C (X) \fr D_n \in \cT'$.
(This is where exactness and real rank zero are used.)
Theorem 4.8 of \cite{Matui} thus implies that
any two tracial state preserving unital homomorphisms
from $C (X)$ to $C (X) \fr D_n$ are approximately unitarily equivalent.
This fact is used to choose the unitaries $u_n$
in the proof of \cref{thm_main}.
The rest of the argument is the same.
\end{proof}

We give a few corollaries of \cref{thm_main} and \cref{thm_main_RRZ}.
Recall that we use Lebesgue measure on $[0, 1]$.

\begin{Cor}
We have $\Zh^{\fr \infty} \cong C ([0, 1])^{\fr \infty}$.
\end{Cor}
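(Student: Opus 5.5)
The plan is to apply \cref{thm_main} twice, once with $A = C([0,1])$ and once with $A = \Zh$, both times with $C_n = \Zh$ for all $n$, and then compare the two outcomes through a common intermediate algebra.

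First, take $C_n = \Zh$ with its unique tracial state $\tau$ for every $n \in \N$. By \cref{Ex_6524_Const}, since $\Zh$ is simple, unital and infinite dimensional, $\At(\Zh, \tau) = 0$. So $\sum_n (1 - \At(C_n, \sm_n)) = \infty$, and condition~(\ref{Ex_6524_Star}) holds.

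Second, apply \cref{thm_main} with $A = C([0,1])$ and $\rh$ given by Lebesgue measure. We need to check the hypotheses.
\begin{enumerate}
\item $C([0,1])$ is a one dimensional NCCW complex: take $E = \C \oplus \C$, $F = \C$, and $\ph = \id$.
\item $K_0(C([0,1])) = \Z \cdot [1]$ and $K_1 = 0$.
\item Lebesgue measure is a faithful tracial state.
\end{enumerate}
This gives $C([0,1]) \fr \Zh^{\fr\infty} \cong \Zh^{\fr\infty}$. Applying \cref{thm_main} with $A = \Zh$ itself also gives $\Zh \fr \Zh^{\fr\infty} \cong \Zh^{\fr\infty}$, though that case is trivial from associativity. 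For $A = \Zh$ one would note that $\Zh$ is an inductive limit of dimension drop algebras, which are one dimensional NCCW complexes.

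Third, and this is the step needing most care, we must get from these to $C([0,1])^{\fr\infty}$. Apply \cref{thm_main} with the roles reversed: $A = \Zh$ (a direct limit of one dimensional NCCW complexes with $K_0 = \Z[1]$, $K_1 = 0$, and its faithful trace), and $C_n = C([0,1])$ with Lebesgue measure. For the atom condition, take $a = \id_{[0,1]}$. Its spectral measure is Lebesgue measure, which has no atoms, so $\At = 0$ and the sum diverges. This gives
\[
\Zh \fr C([0,1])^{\fr\infty} \cong C([0,1])^{\fr\infty}.
\]

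Finally, build a bridge through the mixed product $P = \bigastr_n C_n$, where the $C_n$ alternate between copies of $\Zh$ and copies of $C([0,1])$. Use the associativity and commutativity of countable reduced free products, already used in the proof of \cref{thm_main}.
\begin{enumerate}
\item Apply \cref{thm_main} repeatedly with $A = C([0,1])$, absorbing one factor at a time. This is awkward for infinitely many factors.
\item Better: apply \cref{thm_main} once with $A = C([0,1])^{\fr\infty}$? This fails, since that algebra is not an NCCW limit.
\end{enumerate}

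Instead, argue as follows. In \cref{thm_main}, take the sequence $(C_n)$ to be the alternating sequence. It satisfies~(\ref{Ex_6524_Star}), and the partition-into-blocks rearrangement is allowed. Rather than absorbing the factors one at a time, run the intertwining in the proof of \cref{thm_main} directly. There the single extra factor $A$ is replaced by the family of $C([0,1])$ factors, and at step $n$ one uses the approximate unitary equivalence from (\ref{I_5Y17_aue}) for the $n$-th $C([0,1])$ factor. This gives $P \cong \Zh^{\fr\infty}$. Symmetrically, replacing the $\Zh$ factors gives $P \cong C([0,1])^{\fr\infty}$. For the symmetric step, the needed approximate unitary equivalence of maps $\Zh \to \Zh \fr D$ comes from \cref{C_5Z18_Hom_on_Cu}, exactly as in (\ref{I_5Y17_aue}).

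I expect this last step to be the main obstacle: checking that the proof of \cref{thm_main} genuinely handles absorbing infinitely many factors, interleaved along a subsequence, and not just a single factor $A$. If that works, combining the two isomorphisms for $P$ gives $\Zh^{\fr\infty} \cong P \cong C([0,1])^{\fr\infty}$.
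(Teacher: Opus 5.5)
Your first and third steps are correct. They are exactly the two applications of \cref{thm_main} that the paper uses: $C([0,1]) \fr \Zh^{\fr\infty} \cong \Zh^{\fr\infty}$ and $\Zh \fr C([0,1])^{\fr\infty} \cong C([0,1])^{\fr\infty}$. Your supporting checks are also right: $C([0,1])$ is a one dimensional NCCW complex, $\Zh$ is a limit of dimension drop algebras, and $\At = 0$ in both cases.

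The gap is the final step. You never actually prove $P \cong \Zh^{\fr\infty}$ or $P \cong C([0,1])^{\fr\infty}$. \cref{thm_main} absorbs a single extra free factor, and its proof is built around one copy of $A$: the maps $\mu_n = \ph_n \fr \id_{n-1}$, the unitaries $u_n \in A \fr C_n$, and the automorphisms $\Ad(u_n^*) \fr \id_{n-1}$. So ``rerunning the intertwining'' to absorb infinitely many interleaved factors is a new theorem, which you have neither formulated nor checked. Absorbing one factor at a time does not work either. It gives isomorphisms $C([0,1])^{\fr n} \fr \Zh^{\fr\infty} \cong \Zh^{\fr\infty}$ for each finite $n$, but these are not compatible with the inclusions, so they do not pass to the limit.

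The missing idea is that no new intertwining is needed: regroup into blocks. Partition $\N$ into infinite sets $I_1, I_2, \ldots$ and use associativity and commutativity of reduced free products to write
\[
C([0,1])^{\fr\infty} \fr \Zh^{\fr\infty} \cong \bigastr_{m=1}^{\infty} \Bigl( C([0,1]) \fr \bigastr_{k \in I_m} \Zh \Bigr).
\]
\begin{enumerate}
\item Each block is isomorphic to $\Zh^{\fr\infty}$ by your first application of \cref{thm_main}.
\item The block and $\Zh^{\fr\infty}$ both have unique tracial states by \cref{lemma_Infinite_Free_Product_Selfless}. So this isomorphism automatically preserves the free product traces.
\item Therefore \cref{lem_extending_maps_along_free_products}, induction, and passage to the direct limit (applied also to the inverses) make the free product of the block isomorphisms an isomorphism onto $(\Zh^{\fr\infty})^{\fr\infty} \cong \Zh^{\fr\infty}$.
\end{enumerate}
For the other direction, group each copy of $\Zh$ with infinitely many copies of $C([0,1])$ and use your third step. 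This gives $C([0,1])^{\fr\infty} \fr \Zh^{\fr\infty} \cong C([0,1])^{\fr\infty}$.

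This is exactly the paper's proof. Your alternating algebra $P$ is the right bridge, but it is the single-factor theorem applied blockwise, with trace preservation coming for free from uniqueness of traces, that gets you across it.
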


\begin{proof}
By two applications of \cref{thm_main} and \cref{Ex_6524_Const},
we have
$C ([0, 1]) \fr \Zh^{\fr \infty} \cong \Zh^{\fr \infty}$
and $\Zh \fr C ([0, 1])^{\fr \infty} \cong C ([0, 1])^{\fr \infty}$.
Therefore
\[
C ([0, 1])^{\fr \infty} \fr \Zh^{\fr \infty}
\cong [C ([0, 1]) \fr \Zh^{\fr \infty}]^{\fr \infty}
\cong [\Zh^{\fr \infty}]^{\fr \infty}
\cong \Zh^{\fr \infty},
\]
and similarly
$C ([0, 1])^{\fr \infty} \fr \Zh^{\fr \infty}
\cong C ([0, 1])^{\fr \infty}$.
\end{proof}

\begin{Cor}\label{C_5Z19_0CpP}
Let $X$ and $Y$ be \cms{s},
equipped with probability measures $\mu$ and $\nu$ with full support.
Suppose $X$ is contractible and $Y$ has a compact open subset
$T$ such that $\nu (T)$ is irrational.
Then
\[
C (X) \fr C (Y)^{\fr \infty} \cong C (Y)^{\fr \infty}.
\]
\end{Cor}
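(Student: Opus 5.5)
The plan is to apply \cref{thm_main_RRZ} with $C_n = C(Y)$ and $\sm_n = \nu$ for every $n \in \N$. So we must check three things: the factors are separable, unital, exact, and carry faithful tracial states; there is a partition of $\N$ into infinite sets $I_1, I_2, \ldots$ along which the $K_0$ images generate dense subgroups of $\R$; and the atom condition (\ref{Ex_6524_StSt}) holds along each $I_m$. The first point is immediate: $C(Y)$ is commutative, hence exact; it is separable since $Y$ is a compact metric space; and $\nu$ is faithful because it has full support. Likewise $\mu$ is a faithful tracial state on $C(X)$ with $X$ contractible, as \cref{thm_main_RRZ} requires.

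For the density condition, partition $\N$ into any infinitely many infinite subsets. Since all factors are the same, it is enough that $\nu_*(K_0(C(Y)))$ alone generates a dense subgroup of $\R$. Because $T$ is compact and open, $\chi_T$ is a projection in $C(Y)$, and $\nu_*([\chi_T]) = \nu(T)$. Also $\nu_*([1]) = 1$. The subgroup generated by $1$ and the irrational number $\nu(T)$ is dense in $\R$.

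For the atom condition, we need $\At(C(Y), \nu) < 1$; then every infinite $I_m$ gives $\sum_{k \in I_m} (1 - \At(C(Y), \nu)) = \infty$. Take the self-adjoint element $a = \chi_T$. Its spectrum is $\{0, 1\}$, since $\nu(T) \in (0,1)$ forces both $T$ and $Y \setminus T$ to be nonempty. The spectral measure $\mu_{\nu, a}$ gives $\{1\}$ mass $\nu(T)$ and $\{0\}$ mass $1 - \nu(T)$. Both masses are strictly less than $1$ because $\nu(T)$ is irrational, hence not $0$ or $1$. So $\At(C(Y), \nu) \leq \max(\nu(T), 1 - \nu(T)) < 1$.

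With these hypotheses verified, \cref{thm_main_RRZ} gives $C(X) \fr C(Y)^{\fr \infty} \cong C(Y)^{\fr \infty}$ directly. There is no real obstacle. The only points needing care are that irrationality of $\nu(T)$ serves twice, giving both density and a nontrivial atom bound, and that exactness comes for free from commutativity.
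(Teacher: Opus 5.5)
Your proposal is correct and is essentially the paper's proof: apply \cref{thm_main_RRZ} with every factor equal to $(C(Y), \nu)$, using that $\nu_*(K_0(C(Y)))$ contains both $1$ and the irrational number $\nu(T)$ to get density, and that $\chi_T$ gives $\At(C(Y),\nu) \leq \max(\nu(T), 1-\nu(T)) < 1$. The only difference is that you check the hypotheses in more detail; in particular, you verify exactness of $C(Y)$ itself, which is what the theorem actually requires, whereas the paper cites exactness of $C(Y)^{\fr \infty}$.
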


\begin{proof}
The algebra $C (Y)^{\fr \infty}$
is exact by \cite[Corollary~4.3]{DkmShk}.
Let $\ta$ be the \tst{} on $C (Y)$ induced by~$\nu$.
Since $\ta_* (K_0 (C (Y)))$ contains both $1$ and some irrational
number, it is dense in~$\R$, and also $\At (C (Y), \ta) \neq 1$.
So \cref{thm_main_RRZ} applies.
\end{proof}

\begin{exa}\label{E_5Z20_01Z}
Consider $\C \oplus \C$, endowed with any tracial state
$\sm$ such that $\sm ( 1, 0) \not\in \Q$.
For any $d \in \N \cup \{\infty\}$,
using Lebesgue measure on $[0, 1]^d$,
\cref{thm_main_RRZ} and \cref{Ex_6524_CplusC} imply
\[
C ([0, 1]^d) \fr (\C \oplus \C)^{\fr \infty}
\cong (\C \oplus \C)^{\fr \infty}.
\]
\end{exa}

\begin{exa}\label{E_5Z20_Cantor}
Let $X$ be a contractible \cms{} and let $Y$ be the Cantor set,
both equipped with probability measures with full support.
Then
\[
C (X) \fr C (Y)^{\fr \infty} \cong C (Y)^{\fr \infty}.
\]

To see this, simply observe that $Y$ must have compact open subsets
of arbitrarily small measure.
Since the measures of these sets are nonzero,
the range of the induced trace on $K_0 (C (Y))$ is dense in~$\R$.
Now follow the proof of \cref{C_5Z19_0CpP}.
\end{exa}

\begin{Cor}\label{C_5Z19_CtrDisc}
Let $X$ and $Y$ be \cms{s},
equipped with probability measures with full support,
and such that $X$ is contractible.
Let $D$ be a simple separable infinite dimensional unital exact \ca{}
with real rank zero, and let $\ta$ be a \tst{} on~$D$.
Use $\ta$ and the measure on $Y$
to get a \tst~$\sm$ on $C (Y, D)$ in the obvious way.
Then
\[
C (X) \fr C (Y, D)^{\fr \infty} \cong C (Y, D)^{\fr \infty}.
\]
\end{Cor}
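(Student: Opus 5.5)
The plan is to deduce this from \cref{thm_main_RRZ}, applied with $C_n = C (Y, D)$ and $\sm_n = \sm$ for all $n$, and with $I_1, I_2, \ldots$ any partition of $\N$ into infinite subsets. Several hypotheses need checking: $C (Y, D)$ is separable, unital and exact; $\sm$ is a faithful \tst; the atom condition~(\ref{Ex_6524_StSt}) holds along each $I_n$; and $\sm_* (K_0 (C (Y, D)))$ generates a dense subgroup of~$\R$. Since all the factors are the same, it suffices to show that $\At (C (Y, D), \sm) < 1$ and that $\sm_* (K_0 (C (Y, D)))$ is dense.

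The basic properties come first. Since $Y$ is a \cms, $C (Y)$ is separable, and so is $C (Y, D) \cong C (Y) \otimes D$. It is exact because $D$ is exact and $C (Y)$ is nuclear. For faithfulness of $\sm (f) = \int_Y \ta (f (y)) \, d \nu (y)$, I first need $\ta$ to be faithful; this holds because $D$ is simple, so the kernel of the trace, a closed ideal, is zero. If $f \geq 0$ and $\sm (f) = 0$, then $y \mapsto \ta (f (y))$ is a continuous nonnegative function with integral zero. Since $\nu$ has full support, this function vanishes everywhere, so $f = 0$.

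For the atom condition, I use the constant functions: $D \subseteq C (Y, D)$, and $\sm$ restricts to~$\ta$. The algebra $D$ is simple, unital and infinite dimensional, so \cref{Ex_6524_Const} gives $\At (D, \ta) = 0$. A selfadjoint $a \in D$ has the same spectrum and spectral measure whether viewed in $D$ or in $C (Y, D)$, so $\At (C (Y, D), \sm) \leq \At (D, \ta) = 0$. Hence $\sum_{k \in I_n} (1 - \At) = \infty$ for every $n$.

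The step I expect to be the main obstacle is density of $\sm_* (K_0 (C (Y, D)))$; it needs real rank zero of $D$. Again through constant functions, it suffices to show that $\ta_* (K_0 (D))$ is dense in~$\R$. Since $D$ is simple, infinite dimensional and of real rank zero, every nonzero hereditary subalgebra is infinite dimensional and contains nonzero \pj{s}. Repeatedly splitting should give nonzero \pj{s} $p$ with $\ta (p)$ arbitrarily small: a nonzero \pj{} $p$ has $pDp$ simple, infinite dimensional and of real rank zero, so it contains a \pj{} $q$ with $0 \neq q \neq p$, and then one of $q$ and $p - q$ has trace at most $\ta (p) / 2$. Faithfulness of $\ta$ makes these traces strictly positive. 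A subgroup of $\R$ containing arbitrarily small positive elements is dense. Once this is in place, \cref{thm_main_RRZ} gives the isomorphism.
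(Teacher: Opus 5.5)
Your proposal is correct and takes essentially the same route as the paper. The paper also applies \cref{thm_main_RRZ} with every $C_n = C(Y,D)$ and any partition into infinite sets, uses the constant function built from a Haar unitary of $D$ (the content of \cref{Ex_6524_Const}) to get $\At(C(Y,D),\sm) = 0$, and deduces density of $\sm_*(K_0(C(Y,D)))$ from density of $\ta_*(K_0(D))$. You go further only by justifying faithfulness of $\sm$ and the density of $\ta_*(K_0(D))$ (your projection-halving argument), both of which the paper asserts without proof.
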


\begin{proof}
The algebra $C (Y, D)$ is clearly exact.
The subgroup $\ta_* (K_0 (D))$ is dense in~$\R$,
so $\sm_{*} (K_0 (C (Y, D) ))$ is dense in~$\R$.
Also, $D$ has a Haar unitary $u$ by \cite[Corollary 5.6]{Thiel_diffuse},
and consideration of the constant function with value $u + u^*$
shows that $\At (C (Y, D),\sigma) = 0$.
So \cref{thm_main_RRZ} applies, using any partition of $\N$
into infinitely many infinite sets.
\end{proof}

\begin{exa}\label{E_5Z20_Mn}
Let $D$ be a UHF algebra, or an irrational rotation algebra.
For any $d \in \N \cup \{\infty\}$ we have
\[
C ([0, 1]^d) \fr D^{\fr \infty} \cong D^{\fr \infty}.
\]
\end{exa}

\section{Counterexamples and open questions}\label{Sec_5Z17_Open}

We begin with a discussion of limitations on extensions of
our results.
We then list a few natural open followup questions.

No K-theoretic classification can go very far into algebras
constructed from reduced free products.
An immediate reason is that K-theory does not detect $\Zh$-stability,
not even in the presence of real rank zero.

\begin{exa}\label{Ex_6601_UHF}
Let $D$ be the $2^{\I}$~UHF algebra.
Set
\[
A = D \otimes C ([0, 1])^{\fr \I}
\andeqn
B = D \fr C ([0, 1])^{\fr \I},
\]
using the unique \tst{} on~$D$ and, as usual, Lebesgue measure
on $[0, 1]$.
\cref{L_2622_KthGen} implies that $K_0 (C ([0, 1])^{\fr \I}) \cong \Z$,
generated by $[1]$, and $K_1 (C ([0, 1])^{\fr \I}) = 0$.
Therefore the K{\"u}nneth formula shows that
$K_0 (A) \cong \Z [\frac{1}{2}]$, with $1_A$ corresponding to
$1 \in \Z \SQ \Z [\frac{1}{2}]$, and $K_1 (A) = 0$.
Also, \cref{L_2622_KthGen} and stable finiteness imply that
$K_0 (B) \cong \Z [\frac{1}{2}]$, with $1_B$ corresponding to
$1 \in \Z \SQ \Z [\frac{1}{2}]$, and $K_1 (B) = 0$.
Since $A$ and $B$ have unique \tst{s}
(see \cite[Part~(3) of the corollary on page~431]{Avitzour}),
it follows that they have isomorphic Elliott invariants.
However, $A$ is $\Zh$-stable, even $D$-stable,
while $B$ is not $\Zh$-stable.
\end{exa}

However, much more dramatic phenomena occur.

\begin{exa}\label{Ex_6603_Opp}
In~\cite{PhlVls1}, with $D$ being the $3^{\I}$~UHF algebra,
there is an action $\af$ of $\Z / 3 \Z$ on
\[
A = D \otimes
   \bigl[ \C^3 \fr C ([0, 1]) \fr C ([0, 1]) \fr C ([0, 1]) \bigr]
   = D \otimes [ \C^3 \fr C ([0, 1])^{\fr 3} ]
\]
such that $C^* (\Z / 3 \Z, A, \af)$
is not isomorphic to its opposite algebra.
Thus,  $C^* (\Z / 3 \Z, A, \af)$ and its opposite can't be distinguished by 
K-theoretic
invariants, not even if one includes the Cuntz semigroup,
and common properties, such as $E$-stability for any strongly
selfabsorbing \ca~$E$, also do not help.
This crossed product is even $\Zh$-stable, in fact, $D$-stable,
and, by \cite[Proposition~4.11]{PhlVls2},
satisfies the Universal Coefficient Theorem.
\end{exa}

Finiteness of the reduced free product is probably not the issue here.
Although we have not yet checked,
the same proof likely applies to
$D \otimes [\C^3 \fr C ([0, 1])^{\fr 3}]^{\fr \infty}$,
in which the action on the first free factor
$\C^3 \fr C ([0, 1])^{\fr 3}$ in the infinite free product is $\af$
and the actions on the remaining copies of $\C^3 \fr C ([0, 1])^{\fr 3}$
are trivial.

Bad behavior of the action of $\Z / 3 \Z$ is also not the issue.
It is not known whether, assuming simplicity,
the crossed product of an Elliott classifiable \ca{}
by a finite group is again Elliott classifiable.
However, the action $\af$ above has the continuous Rokhlin property,
and the crossed product
of a simple separable nuclear unital $\Zh$-stable \ca{}
satisfying the Universal Coefficient Theorem
by an action of a finite group with this property
is again $\Zh$-stable and, by Proposition~3.8 of~\cite{PhlVls2},
satisfies the Universal Coefficient Theorem.

\cref{thm_main} and \cref{thm_main_RRZ}
fail if one places no restrictions on the sizes of atoms.
Our example uses the following proposition.

\begin{Prop}\label{P_6604_NonSimp}
Let $\lambda_1, \lambda_2, \ldots \in (0, \frac{1}{2}]$.
Take $C_n = \C \oplus \C$,
with the \tst{} $\sigma_n (a, b) = \lambda_n a + (1-\lambda_n) b$.
Assume that $\sum_{n = 1}^{\infty} \lambda_n \leq 1$.
Then there is a unital \hm{}
$\om \colon \bigastr_{n = 1}^{\infty} \, (C_n, \sm_n) \to \C$.
\end{Prop}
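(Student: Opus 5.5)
Write $p_n = (1,0) \in C_n$, a projection with $\sm_n (p_n) = \lambda_n$, and let $P_n$ denote its image in $D = \bigastr_{n = 1}^{\infty} (C_n, \sm_n)$. A unital homomorphism $\om \colon D \to \C$ is the same as a character, and a character on $D$ is determined by choosing, for each $n$, whether $\om (P_n) = 0$ or $1$. The natural choice is $\om (P_n) = 0$ for all~$n$, that is, $\om$ is the character of $C_n$ that evaluates the second coordinate (the one of weight $1 - \lambda_n \geq \frac{1}{2}$) on every factor. By the universal property of the \emph{full} free product, this defines a character on the full free product $\bigast_n C_n$. The task is to show that it factors through the reduced free product, i.e.\ that $|\om (x)| \leq \| x \|_{D}$ for $x$ in the algebraic free product. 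Equivalently, the point is to show that $\| 1 - P_1 \vee \cdots \vee P_N \| \ne 0$, or more quantitatively that the projections $Q_n = 1 - P_n$ have $\bigwedge_{n \leq N} Q_n \neq 0$ in $D$, uniformly enough to pass to a limit.

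To do this, I would use the finite case and pass to the limit. For finitely many factors, the reduced free product of two-dimensional algebras $\C \oplus \C$ is known explicitly (Dykema's computations). In particular, in $(\C \oplus \C)^{\fr N}$, the infimum $Q_1 \wedge \cdots \wedge Q_N$ taken in the weak closure $L (\ldots)$ has trace $\max \bigl( 0, \, 1 - \sum_{n \leq N} \lambda_n \bigr)$. More to the point, the C*-algebra has a one-dimensional direct summand (a central projection $q_N$ with $D_N = \C q_N \oplus (1 - q_N) D_N$) exactly when $\sum \lambda_n < 1$. Since $\sum \lambda_n \leq 1$ is only assumed for the infinite sum, each finite partial sum satisfies $\sum_{n \leq N} \lambda_n < 1$ unless equality is reached at finite $N$. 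Equality with finitely many nonzero terms cannot happen here, since all $\lambda_n > 0$ and there are infinitely many of them. So each $D_N = \bigastr_{n \leq N} C_n$ has a character $\om_N$ sending every $P_n$ to $0$, namely compression to the minimal central projection $q_N = Q_1 \wedge \cdots \wedge Q_N$, with $\rh (q_N) = 1 - \sum_{n \leq N} \lambda_n$.

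Next I would check compatibility: $\om_{N+1}$ restricted to $D_N$ equals $\om_N$, because both send each $P_n$ with $n \leq N$ to $0$ and $D_N$ is generated by these. Since $D = \dirlim D_N$ with isometric connecting maps, the characters $\om_N$, each of norm one, define a contractive unital homomorphism on the dense union $\bigcup_N D_N$, and this extends to $\om$ on~$D$. Notice that this step needs no quantitative control: even though $\rh (q_N) \to 1 - \sum \lambda_n$, which may be $0$, each $\om_N$ is a genuine character of the C*-algebra $D_N$, so the limit exists regardless.

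The main obstacle is the finite-stage claim that $Q_1 \wedge \cdots \wedge Q_N$ is a nonzero central projection lying in the C*-algebra $D_N$ itself, not merely in its von Neumann closure. I would prove this by induction on $N$ using the known structure of $(\C \oplus \C) \fr (\C \oplus \C)$ and, more generally, Dykema's results on free products with $\C \oplus \C$. The base step is the two-projection case: for free projections $P, P'$ of traces $\lambda, \lambda'$ with $\lambda + \lambda' < 1$, the C*-algebra they generate is $\C \oplus \C \oplus \C \oplus \C$-corners plus $C ([a, b], M_2)$, with $(1-P) \wedge (1-P')$ an isolated summand of trace $1 - \lambda - \lambda'$. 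For the inductive step, I would apply the same analysis to the projection $1 - q_N$ of trace $\sum_{n \leq N} \lambda_n$, free from $P_{N+1}$. Its spectral gap (the spectrum of $P_{N+1} (1 - q_N) P_{N+1}$ stays away from $1$ when the traces sum to less than $1$) gives the isolated summand via continuous functional calculus.
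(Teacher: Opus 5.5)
Your proposal is correct. It has the same architecture as the paper's proof: characters $\om_N$ on the finite stages sending every $P_n$ to $0$, compatibility under inclusion, and extension from the dense union. Where you differ is in how $\om_N$ is produced.

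The paper puts $b_N = \sum_{k \leq N} Q_k$. It uses the Bercovici--Voiculescu atom formula for free additive convolution to see that $\{N\}$ is an atom of the distribution of $b_N$. It then takes $e_N = \chi_{\{N\}}(\pi_N(b_N))$ by Borel functional calculus in $L(H_N)$; this is precisely your $Q_1 \wedge \cdots \wedge Q_N$. A norm argument gives $\pi_N(Q_k) e_N = e_N$ for all $k$. So $e_N$ commutes with $\pi_N(D_N)$, and $\pi_N(a) e_N = \om_N(a) e_N$ defines the character.

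The key point is that the paper never needs $e_N \in D_N$. Compression by any nonzero projection $e \in L(H_N)$ with $\pi_N(Q_k) e = e$ for all $k$ already yields a character with $|\om_N(a)| \leq \|a\|$. So the step you single out as the main obstacle is unnecessary: your own second paragraph, that the meet is nonzero in the weak closure, already finishes the finite stage.

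Your inductive spectral-gap argument is nonetheless valid. For free projections $p, q$ with $\tau(p) + \tau(q) < 1$ one does have $\|pq\| < 1$. Hence $P_{N+1} \vee (1 - q_N)$, and therefore $q_{N+1}$, lies in $D_{N+1}$ by continuous functional calculus. This buys a stronger conclusion, namely that $\C q_N$ is an honest direct summand of $D_N$. The cost is importing Dykema's computations and the norm of $pqp$ for free projections.

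Finally, nonvanishing of the meet needs no freeness at all. In the finite von Neumann algebra $\pi_N(D_N)''$, Kaplansky's parallelogram law gives $\tau(E \vee F) \leq \tau(E) + \tau(F)$. Hence $\tau(Q_1 \wedge \cdots \wedge Q_N) \geq 1 - \sum_{n \leq N} \lambda_n > 0$. Combined with the compression trick, this gives a proof more elementary than either yours or the paper's.
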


\begin{proof}
For $n \in \N$ let $p_n, q_n \in C_n$ be the \pj{s}
$p_n = (1, 0)$ and $q_n = (0, 1)$,
so that, in particular, $\sm_n (q_n) = 1 - \lambda_n$.
Further, for $n \in \N$ set $A_n = \bigastr_{k = 1}^{n} \, (C_k, \sm_k)$,
let $\ta_n$ be the reduced free product trace on~$A_n$,
let $H_n$ be the associated free product Hilbert space
(\cite[Definition~1.3.1]{VoiDyNi}),
and let $\pi_n \colon A_n \to L (H_n)$ be the free product representation
(\cite[Definition~1.5.1]{VoiDyNi}).

Let $n \in \N$.
We construct a \uhm{} $\om_n \colon A_n \to \C$
such that for $k = 1, 2, \ldots, n$ we have $\om_n (q_k) = 1$.
Set $b_n = \sum_{k = 1}^{n} q_k$.
By free independence,
the spectral distribution of $b_n$ with respect to $\ta_n$ is
given by free additive convolution
(\cite[Definition 3.1.1]{VoiDyNi}).
Since $\sum_{j = 1}^{k} (1 - \lambda_j) > k - 1$
for $k = 1, 2, \ldots, n$,
induction on $k$ and the atom formula for free additive
convolution, \cite[Theorem~7.4]{Bercovici_Voiculescu_Regularity}, imply
that $\{ n \}$ is an atom for the spectral measure of $b_n$
with respect to~$\ta_n$.
Since $\ta_n$ is a vector state on $\pi_n (A_n)$,
it follows (using bounded Borel functional calculus) that the \pj{}
$e_n = \ch_{ \{ n \} } (\pi_n (b_n))$ is nonzero.

We claim that $e_n$ commutes with $\pi_n (A_n)$.
To prove the claim, it suffices to show that
$e_n$ commutes with $\pi_n (q_k)$ for $k = 1, 2, \ldots, n$.
It is enough to show that $\pi_n (q_k) e_n = e_n$.
Let $\xi \in e_n H_n$; we show that $\pi_n (q_k) \xi = \xi$.
\Wolog{} $\| \xi \| = 1$.
We have $\| \pi_n (q_k) \xi \| \leq \| q_k \| = 1$
for $k = 1, 2, \ldots, n$, while $\pi_n (b_n) \xi = n \xi$, so
\[
\left\| \sum_{k = 1}^{n} \pi_n (q_k) \xi \right\|
 = \| \pi_n (b_n) \xi \| = n.
\]
Therefore $\pi_n (q_k) \xi = \xi$ for $k = 1, 2, \ldots, n$.
The claim is proved.

It follows from the claim that there is a \uhm{}
$\om_n \colon A_n \to \C$ such that for all $a \in A_n$ we have
$e_n \pi_n (a) e_n = \om_n (a) e_n$.
For $k = 1, 2, \ldots, n$, since $\pi_n (q_k) e_n = e_n$,
we get $\om_n (q_k) = 1$.
This completes the construction.

Identifying $A_n$ with its image
in $\bigastr_{n = 1}^{\infty} \, (C_n, \sm_n)$, we get
\[
\bigastr_{n = 1}^{\infty} \, (C_n, \sm_n)
 = {\overline{ \bigcup_{n = 1}^{\I} A_n }}.
\]
Clearly $\om_{n + 1} |_{A_n} = \om_n$ for $n \in \N$.
The existence of $\om$ follows.
\end{proof}

\begin{exa}\label{Ex_6604_NonSmp}
Let the notation be as in \cref{P_6604_NonSimp},
with the choices $\ld_1 = \frac{1}{2}$ and
$\ld_n = 1 / 3^n$ for $n = 2, 3, \ldots$.
Then \cref{P_6604_NonSimp} implies that the
resulting reduced free product
$C = \bigastr_{n = 1}^{\infty} \, (C_n, \sm_n)$ is not simple.
However, $C_1$ contains a unitary with trace zero,
so $C$ does also.
Therefore \cite[Part~(3) of the corollary on page~431]{Avitzour}
implies that $C ([0, 1]) \fr C$ is simple.
% $C ([0, 1]) \fr \bigastr_{n = 1}^{\infty} \, (C_n, \sm_n)$ is simple.
\end{exa}

In fact, whenever $\sum_{n = 1}^{\I} \ld_n \leq 1$,
the algebra $C$ in \cref{Ex_6604_NonSmp}
does not freely absorb $C ([0, 1])$.
Otherwise, we would have
$C ([0, 1]) \fr [C ([0, 1]) \fr C] \cong C$.
Now $C$ is not simple by \cref{P_6604_NonSimp},
but $C ([0, 1]) \fr [C ([0, 1]) \fr C] \cong C$ is simple
by \cite[Part~(3) of the corollary on page~431]{Avitzour},
regardless of what $C$ is.

The first followup question
is a special case of \cite[Problem XCVIII]{99problems}.

\begin{qst}\label{Q_5Z19_Iso}
Using Lebesgue measure on $[0, 1]$,
do we have
\[
C ([0, 1]) \fr C ([0, 1]) \cong \Zh \fr C ([0, 1]) \cong \Zh \fr \Zh?
\]
\end{qst}

The recent result of \cite{hayes_ke_robert},
which shows that the C*-algebras in the question are selfless,
suggests a possible strategy to push our results
from the case of infinite free products to finite free products,
but this does not appear to be straightforward.
We also seem to make no progress towards deciding whether
$C ([0, 1])^{\fr k} \cong C ([0, 1])^{\fr l}$ when $k \neq l$,
another special case of \cite[Problem XCVIII]{99problems}.

\begin{qst}\label{Q_5Z19_01n}
For $n > 1$, consider $C ([0, 1]^n)$ with Lebesgue measure.
Do we have
$C ([0, 1]^n) \fr C ([0, 1])^{\fr \infty} \cong C ([0, 1])^{\fr \infty}$?
\end{qst}

The following related question is perhaps simpler.
It is one of the simplest cases of \cref{E_5Z20_01Z} without
real rank zero.

\begin{qst}\label{Qst_6129_half}
Consider $\C \oplus \C$, endowed with the tracial state
$\sm$ such that $\sm ( 1, 0) = \frac{1}{2}$.
Let $d \geq 2$.
Using Lebesgue measure on $[0, 1]^d$, do we have
\[
C ([0, 1]^d) \fr (\C \oplus \C)^{\fr \infty}
  \cong (\C \oplus \C)^{\fr \infty} ?
\]
\end{qst}

The answer to Question~\ref{Q_5Z19_01n}
would be yes if the answer to
the first part of the following question is positive.
The second part would also cover \cref{Qst_6129_half} and many
similar questions.

\begin{qst}\label{Q_6129_aue}
Let $\ta$ be the unique \tst{} on $C ([0, 1])^{\fr \infty}$.
Let $n \in \N$, and let
$\ph, \ps \colon C ([0, 1]^n) \to C ([0, 1])^{\fr \infty}$
be injective unital \hm{s} such that $\ta \circ \ph = \ta \circ \ps$.
Does it follow that $\ph$ and $\ps$
are approximately unitarily equivalent?

If so, more generally, suppose we replace $C ([0, 1])^{\fr \infty}$
with a simple unital separable C*-algebra
with stable rank $1$, strict comparison,
and a unique $2$-quasitracial state~$\ta$ which is a trace.
Does this still hold?
\end{qst}

For $n = 1$, a positive solution is contained
in~\cite[Theorem~1.0.1]{Robert_NCCW} and~\cite[Theorem
4.1]{Ciuperca_Elliott},
a result which plays a key role in this paper.

Going beyond contractible spaces,
we think that the following questions would be natural to consider.

\begin{qst}\label{Q_6129_More}
Consider $S^1$ with Lebesgue measure,
let $X$ be a union of $S^1$ and a line segment connected at a point
with normalized one dimensional Hausdorff measure,
let $Y$ be an annulus in the plane with normalized Lebesgue measure,
and let $S^3$ have normalized surface measure.
\begin{enumerate}
\item\label{5X23_1}
Consider $\C \oplus \C$ with some tracial state~$\sm$.
To ensure that the free products below have real rank zero,
surely the easier case, assume that $\sm (1, 0) \not\in \Q$.
Do we have:
\begin{enumerate}
\item\label{5X23_1_A}
$C (S^1) \fr (\C \oplus \C)^{\fr \infty}
 \cong C (X) \fr (\C \oplus \C)^{\fr \infty}$?
\item\label{5X23_1_BB}
$C (S^1) \fr (\C \oplus \C)^{\fr \infty}
 \cong C (Y) \fr (\C \oplus \C)^{\fr \infty}$?
\item\label{5X23_1_CC}
$C (S^1) \fr (\C \oplus \C)^{\fr \infty}
 \cong C (S^3) \fr (\C \oplus \C)^{\fr \infty}$?
\end{enumerate}
\item\label{5X23_22}
Use Lebesgue measure on $[0, 1]$.
Do we have:
\begin{enumerate}
\item\label{5X23_2_aa}
$C (S^1) \fr C ([0, 1])^{\fr \infty}
 \cong C (X) \fr C ([0, 1])^{\fr \infty}$?
\item\label{5X23_2__b}
$C (S^1) \fr C ([0, 1])^{\fr \infty}
 \cong C (Y) \fr C ([0, 1])^{\fr \infty}$?
\item\label{5X23_2_cc}
$C (S^1) \fr C ([0, 1])^{\fr \infty}
 \cong C (S^3) \fr C ([0, 1])^{\fr \infty}$?
\end{enumerate}
\end{enumerate}
\end{qst}

In each part of the question,
the items appear to be successively harder.
In the first part we have real rank zero.
A union of a circle with a line segment is one dimensional,
so $C (X)$ is semiprojective.
One might hope that there could be an ad hoc argument
to prove such a theorem.
An annulus is homotopy equivalent to a circle, but two dimensional.
The three dimensional sphere has the same $K$-theory as the circle.
However, aside from being of higher dimension,
$C (S^3)$ is not $K_1$-surjective,
and we cannot find maps between $C (S^1)$ and $C (S^3)$
which are nontrivial on $K_1$.
The second part of the question
would add complexity by not assuming real rank zero.

The purely infinite simple case is different,
because there is now never a canonical choice of state,
and because inner automorphisms
need not respect states which are not tracial.
We suggest the questions below.
To keep things simple, we discuss only free products of
infinitely many copies of the same algebra.
The discussion will use the following two results.

\begin{Prop}\label{P_6302_UCT}
Let $A_1$ and $A_2$ be separable \ca{s}, let $B$ be a \ca{}
which is a countable direct sum of \fd{} matrix algebras,
and for $j = 1, 2$ let $\io_j \colon B \to A_j$ be an injective
quasiunital \hm{} and let $E_j \colon A_j \to \io_j (B)$
be a nondegenerate conditional expectation.
Assume that $A_1$ and $A_2$ satisfy the Universal Coefficient Theorem.
Then the reduced free product
$(A_1, E_1) *_{\operatorname{r}, B} (A_2, E_2)$
satisfies the Universal Coefficient Theorem.
\end{Prop}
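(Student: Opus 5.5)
The plan is to transfer the UCT from the full amalgamated free product to the reduced one, via a KK-equivalence between the two.

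The first step is to show that the full amalgamated free product $A_1 *_B A_2$ satisfies the UCT. Since $B$ is a countable direct sum of finite dimensional matrix algebras, $B$ is AF and hence in the bootstrap class. The quasiunital embeddings $\io_j$ let us form the amalgamated free product over $B$. I would invoke the Pimsner-type six-term exact sequence for amalgamated free products over such a $B$, as in \cite[Theorem 6.4]{Thn2} and its KK-version. This exhibits $A_1 *_B A_2$ as KK-equivalent to a C*-algebra built from $A_1$, $A_2$, and $B$. Concretely, it is KK-equivalent to the mapping-cylinder type algebra
\[
\{ (a_1, a_2, f) \in A_1 \oplus A_2 \oplus C([0,1], B) \colon f(0) = \io_1^{-1}\text{-compatible with } a_1, \ f(1) \text{ compatible with } a_2 \},
\]
or more precisely a double mapping cylinder of $A_1 \leftarrow B \rightarrow A_2$. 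The bootstrap class is closed under extensions and KK-equivalence. The double mapping cylinder is an extension of $A_1 \oplus A_2$ by $SB$, and both of these are in the bootstrap class. Hence $A_1 *_B A_2$ satisfies the UCT.

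The second step is to pass to the reduced free product. I would apply the result of Hasegawa \cite[Theorem 1.1]{Hsgw}, in the general amalgamated form with nondegenerate conditional expectations: the canonical quotient map
\[
A_1 *_B A_2 \to (A_1, E_1) *_{\operatorname{r}, B} (A_2, E_2)
\]
is a KK-equivalence. This is exactly the generality of Hasegawa's theorem, which treats amalgamation over a C*-algebra $B$ with nondegenerate conditional expectations, and which the paper has already used with $B = \C$. Since the UCT is invariant under KK-equivalence, the reduced free product satisfies the UCT. Separability of $A_1$ and $A_2$, and hence of $B$, is needed so that the bootstrap class and the UCT make sense.

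The main obstacle is the first step for nonunital or quasiunital inclusions, where $\io_j(1_B)$ need not be the unit, and for $B$ nonunital (a countable direct sum). I would handle this either by reducing to Thomsen's result for the relevant corner and unitization, or by writing $B$ as an increasing union of finite direct sums $B_n$. In the latter approach, one would check that the free products along the $B_n$ give an inductive system whose limit is the given algebra, and then use closure of the bootstrap class under inductive limits. If the KK-equivalence in \cite{Hsgw} is already stated for $\sigma$-unital $B$ and nondegenerate expectations, this reduction is unnecessary.
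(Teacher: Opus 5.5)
Your second step is exactly the paper's first step: \cite[Theorem 1.1]{Hsgw} reduces everything to the full product $A_1 *_B A_2$. Your overall plan for the full product is also the paper's. The gap is in how you carry that plan out.

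The algebra you write down is not defined. The maps go $\io_j \colon B \to A_j$, so there is no homomorphism $A_j \to B$ with which to compare $a_j$ against $f(0)$ or $f(1)$. The expectations $E_j$ are not multiplicative, so using them does not produce a C*-algebra. If you read the conditions as $a_1 = \io_1 (f(0))$ and $a_2 = \io_2 (f(1))$, the algebra is just $C([0,1], B)$, which is KK-equivalent to $B$. That is already wrong for $B = \C$ and $A_1 = A_2 = \C^2$, where $K_0 (A_1 *_{\C} A_2) \cong \Z^3$.

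The correct object is the mapping cone $C \io$ of $\io = (\io_1, \io_2) \colon B \to A_1 \oplus A_2$. It has ideal $S (A_1 \oplus A_2)$ and quotient $B$, the extension is semisplit because $B$ is nuclear, and so $C \io$ satisfies the UCT. What you then need is that the canonical map $G \colon C \io \to S (A_1 *_B A_2)$ is a KK-equivalence.

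That KK-equivalence is the real content, and you have not supplied it in the stated generality: $B$ may be an infinite direct sum, and the $\io_j$ are only quasiunital. \cite{Thn2} treats only special cases of this. The paper gets it from \cite{Eliasen}: the proof of \cite[Theorem~4.3]{Eliasen} verifies conditions (1) and~(2) of \cite[Theorem~3.3]{Eliasen}, and the proof of that theorem shows that $G$ is a KK-equivalence.

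Your fallback reductions do not close this gap.
\begin{itemize}
\item Unitizing $B$ produces an infinite dimensional AF algebra, not a finite dimensional one.
\item Passing to the unital corners $\io_j (p_n) A_j \io_j (p_n)$, with $p_n = 1_{B_n}$, would require these corners to satisfy the UCT. That does not follow from the UCT for $A_j$, because the corners need not be full. A nonfull corner is stably isomorphic to an ideal, and the UCT does not pass to ideals: $S D$ is an ideal in the contractible cone over $D$.
\item Amalgamating over $B_n$ inside the original $A_j$ gives embeddings that are not quasiunital. You would still need the mapping-cone KK-equivalence for those.
\end{itemize}

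Finally, your closing hedge conflates the two steps. \cite{Hsgw} compares only the full and reduced products and says nothing about the UCT for $A_1 *_B A_2$. No form of that theorem makes the first-step reduction unnecessary.
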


Here, quasiunital means that $\io_j (B) A_j$ is dense in~$A_j$.
See the discussion before \cite[Lemma~3.6]{Eliasen}.
Nondegenerate means that the associated Gelfand-Naimark-Segal
representation is faithful.
See the discussion before \cite[Theorem~1.1]{Hsgw}.

The case we need is $B = \C$, so that $E_1$ and $E_2$ are just
states with faithful Gelfand-Naimark-Segal representations.
However, it seems useful to give the currently known generality.

The KK-equivalence $G$ in the proof
is used in special cases in \cite{Germain_2} and
\cite{Thn2}, and in this generality in~\cite{Eliasen}.
However, it is not explicit in the statements of theorems in those papers
that this map is a KK-equivalence.

The argument was given in the proof of \cite[Proposition~4.11]{PhlVls2},
but the statement there assumes the states are tracial.

\begin{proof}[Proof of \cref{P_6302_UCT}]
By \cite[Theorem~1.1]{Hsgw}, the canonical map
\[
A_1 *_B A_2 \to (A_1, E_1) *_{\operatorname{r}, B} (A_2, E_2)
\]
is a KK-equivalence.
(This does not require any conditions on $B$ beyond separability.)
Therefore it suffices to prove that $A_1 *_B A_2$
satisfies the Universal Coefficient Theorem.

Define $\io \colon B \to A_1 \oplus A_2$
by $\io (b) = (\io_1 (b), \io_2 (b))$ for $b \in B$.
Let $C \io$ be the mapping cone of~$\io$.
Let $G \colon C \io \to S (A_1 *_B A_2)$ be as defined at the
beginning of \cite[Section~3]{Eliasen}.
The algebra $C \io$ satisfies the Universal Coefficient Theorem,
and, for any~$D$, the algebra $S D$
satisfies the Universal Coefficient Theorem \ifo{} $D$ does.
Therefore it is enough to prove that $G$ is a KK-equivalence.
The proof of \cite[Theorem~4.3]{Eliasen} shows that
conditions (1) and~(2) of \cite[Theorem~3.3]{Eliasen} hold.
The proof of \cite[Theorem~3.3]{Eliasen} proceeds by
showing that $G$ is a KK-equivalence;
see the proof of \cite[Theorem~2.7]{Thn2}.
\end{proof}

We are grateful to Narutaka Ozawa for allowing us to include
the proof of the next proposition.

\begin{Prop}[Ozawa]\label{P_6302_Nonnuc}
Let $A$ be a separable unital \ca, and let $\om$ be a
state on~$A$
 whose associated Gelfand-Naimark-Segal representation is faithful.
As in \cref{Nt_5Z19_FP}, take reduced free products to be
amalgamated over~$\C$.
Then $(A, \om)^{\fr \I}$ is nuclear \ifo{} $A$ is nuclear and $\om$
is pure.
In this case, the infinite reduced free product state is pure.
\end{Prop}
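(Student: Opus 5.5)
We have to prove both directions of Proposition~\ref{P_6302_Nonnuc}, together with the purity claim.

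\emph{The case $\om$ pure and $A$ nuclear.} Let $(H, \pi, \xi)$ be the Gelfand-Naimark-Segal triple of~$\om$. Since $\pi$ is faithful, we regard $A \subseteq L(H)$. Purity of $\om$ makes $\pi$ irreducible, so $\pi(A)$ is weakly dense in $L(H)$. Kadison transitivity then shows $\pi (A) \cap K (H) \neq 0$, and irreducibility upgrades this to $K(H) \subseteq \pi (A)$, at least when $H$ is infinite dimensional; if $H$ is finite dimensional, $A = M_k$.

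Let $H^{\circ} = H \ominus \C \xi$. The free product Hilbert space is
\[
\C \Omega \oplus \bigoplus_{n} \bigoplus_{i_1 \neq \cdots \neq i_n} H_{i_1}^{\circ} \otimes \cdots \otimes H_{i_n}^{\circ}.
\]
Let $e$ be the rank one \pj{} onto $\C\xi$ in the first factor. We compute $\pi(e)$ on this space. It kills every tensor that begins with a letter from factor~1, and on tensors beginning with other letters it acts as the \pj{} onto the span of $\Omega$ followed by those tensors, since $e\xi = \xi$. The same holds for each copy $e_i$. The product $\prod_{i \le N} \pi(e_i)$, in any order, is then the \pj{} onto $\C \Omega$ plus words avoiding the letters $1, \dots, N$. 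This converges strongly to the \pj{} onto $\C\Omega$, but not in norm.

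So a direct identification of compact operators fails, and I would instead use the standard Dykema / Ozawa-style description: the reduced free product of $(A,\om)$, with $\om$ a vector state whose support is a minimal \pj{} of~$A$, is an extension of a nuclear algebra by $K(\mathcal{H})$. A cleaner route is via known results of Dykema on free products of nuclear algebras with pure states, or via the fact that the vacuum state on $(A,\om)^{\fr \I}$ is pure. The plan is to show the vacuum state is pure and that the algebra contains the compacts with a nuclear quotient, obtained by induction over finite free products together with an extension argument. Purity of the vacuum follows because $\Omega$ is cyclic and the compression $e_\Omega$ is a weak limit of products of elements $e_i$ in the algebra.

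\emph{The converse.} Suppose $(A,\om)^{\fr\I}$ is nuclear. Then $A$ is nuclear, since it is the image of a conditional expectation: the free product has a canonical faithful conditional expectation onto each factor.

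Now suppose $\om$ is not pure. The aim is to find a copy of a non-nuclear algebra inside the free product. If $\om$ is not pure, $\pi_\om(A)''$ is not a factor of type~I with $\xi$ a minimal vector; more precisely, $\dim H^\circ \ge 1$ and the vector state is not a minimal \pj{} state. I would then show that the weak closure of $(A,\om)^{\fr 2}$ in the free product representation has a non-injective corner. Concretely, the GNS von Neumann algebra $(M,\om)^{*2}$, with $\om$ non-pure on $M = \pi(A)''$, has, by Dykema and Ueda type results, a diffuse non-injective part, typically containing a free group factor or a free Araki--Woods factor.

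Nuclearity of a C*-algebra implies injectivity of the weak closure in any representation, which gives the contradiction. This is the main obstacle: establishing non-injectivity of the free product von Neumann algebra for a non-pure, possibly non-tracial, state. My first attempt would be to pick two orthogonal vectors giving a non-pure decomposition and build a Haar unitary-like free family. As a fallback, I would use Ueda's theorem that free products of non-trivial von Neumann algebras with faithful normal states have a non-injective summand, applied after cutting by the support of the state.
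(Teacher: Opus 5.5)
Your treatment of the non-pure case has a genuine gap.

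\textbf{Two copies are not enough.} Take $A=\C^2$ with a faithful state, say with weights $\tfrac12,\tfrac12$. This state is not pure, yet $(A,\om)\fr(A,\om)$ is generated by two projections. It is therefore a quotient of the universal C*-algebra of two projections, which is subhomogeneous. Hence it is nuclear (here it is the reduced group C*-algebra of the infinite dihedral group), and its weak closure in the free product representation is injective. There is no non-injective corner to find. The version of Ueda's theorem you invoke (nontrivial factors with faithful normal states always give a non-injective summand) is false in exactly this case, $\C^2 * \C^2$.

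The paper uses three copies. A corner of $\pi_{\om_2}(A\fr A)'$ is the von Neumann free product of $\pi_\om(A)'$ with itself, hence infinite dimensional. Then \cite[Theorem~4.1, Corollary~4.3]{Ueda}, applied to $\pi_\om(A)'$ and $\pi_{\om_2}(A\fr A)'$, gives a nonzero prime, hence non-hyperfinite, diffuse part. So $A\fr A\fr A$ is not nuclear, and the conditional expectation from $(A,\om)^{\fr\infty}$ onto it finishes the argument.

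\textbf{Faithfulness.} You work with $M=\pi_\om(A)''$ and the vector state of $\xi$. That state is generally not faithful, since $\xi$ is cyclic but need not be separating for $M$ (for example, $A=M_3$ with a rank two density matrix). So Ueda's results do not apply to $(M,\om)$. Your ``cut by the support'' fallback needs the unproved fact that the support corner of the free product is the free product of the support corners. The paper's device is to pass to commutants. Since $\xi_i$ is cyclic for $\pi_i(A_i)$, it is separating for $\pi_i(A_i)'$, so the vector state on the commutant is faithful. Then \cref{T_6402_CommFP} identifies a corner of the commutant of the free product representation with the von Neumann free product of the commutants.

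\textbf{The pure direction.} The claim that Kadison transitivity gives $\pi(A)\cap K(H)\neq 0$ is false: irreducible representations of $\mathcal{O}_2$ or of a UHF algebra contain no nonzero compact operators. So the rank one projections $e_i$ need not lie in the algebra, and the extension-by-compacts route has no basis. For nuclearity, simply cite the result that a reduced free product of unital nuclear C*-algebras is nuclear when one of the states is pure \cite[Theorem~4.8.7]{Brown_Ozawa}. Then induct on the number of factors and pass to the inductive limit, as the paper does.

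Your purity argument can be repaired, and then it is a pleasant elementary alternative to the paper's use of \cref{T_6402_CommFP}. Since $\pi_i(A_i)''=L(H_i)$ and the $i$-th factor acts on the free product Hilbert space by an amplification of $\pi_i$, the image of $e_i$ lies in the weak closure of the free product. That image is the projection onto $\C\Omega$ plus the words whose first letter is not $i$. The products over $i\le N$ converge strongly to the projection onto $\C\Omega$, which therefore lies in the bicommutant. Cyclicity of $\Omega$ then gives irreducibility.
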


We use the following result.
It should be standard, but we have not found
the full statement in the literature.
The main part is in \cite[Theorem~1.6.5]{VoiDyNi},
and the rest is at least implicit in the proof of Theorem~5 of the
unpublished Master's thesis~\cite{LiQ}.

\begin{Thm}\label{T_6402_CommFP}
Let $( (A_i, \om_i))_{i \in I}$ be a family of unital \ca{s}
equipped with states $\om_i \colon A_i \to \C$.
For $i \in I$, let $\pi_i \colon A_i \to L (H_i)$ be the
Gelfand-Naimark-Segal representation associated to $\om_i$,
let $\xi_i \in H_i$ be the distinguished vector,
set $H_i^{\circ} = (\C \xi_i)^{\perp}$,
set $M_i = \pi_i (A_i)'$, and let $\gm_i \colon M_i \to \C$
be the state
$\gm_i (b) = \langle b \xi_i, \xi_i \rangle$ for $b \in M_i$.
Let $H = \bigast_{i \in I} H_i$ be the free product Hilbert space
(\cite[Definition~1.3.1]{VoiDyNi}), given by
\[
H = \C \xi \oplus \bigoplus_{n = 1}^{\I}
    \left( \bigoplus_{\substack{
    i_1,\ldots,i_n \in I \\
    i_1 \neq i_2,\ i_2 \neq i_3,\ \ldots,\ i_{n-1} \neq i_n
    }}
             H_{i_1}^{\circ} \otimes H_{i_2}^{\circ} \otimes
             \cdots \otimes H_{i_n}^{\circ} \right).
\]
Let $(A, \om) = \bigastr_{i \in I} (A_i, \om_i)$ be the reduced
free product amalgamated over~$\C \cdot 1_{A_i}$ for $i \in I$,
and let $\pi \colon A \to L (H)$ be the free product representation
(\cite[Definition~1.5.1]{VoiDyNi}).
Then $\gm_i$ is a faithful normal state for every $i \in I$.
Moreover, the \pj{} $p \in L (H)$ onto ${\overline{\pi (A)' \xi}}$
is in $\pi (A)''$, and
$p \pi (A)'$ is unitarily equivalent to
the free product representation of the von Neumann algebra
reduced free product $\bigastr_{i \in I} (M_i, \gm_i)$.
\end{Thm}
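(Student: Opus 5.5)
The plan is to produce, inside $\pi (A)'$, a concrete copy of the von Neumann algebra free product acting by \emph{right} multiplication. Then I will identify ${\overline{\pi (A)' \xi}}$ with the free product Hilbert space of the cyclic subspaces ${\overline{M_i \xi_i}}$.

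First, normality and faithfulness of $\gm_i$. Normality is clear, since $\gm_i$ is a vector state. For faithfulness, note that $\xi_i$ is cyclic for $\pi_i (A_i)$, hence separating for $M_i = \pi_i (A_i)'$. So $b \in M_i$ with $\gm_i (b^* b) = \| b \xi_i \|^2 = 0$ gives $b = 0$.

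Next, the right action. Following \cite[Section~1.6]{VoiDyNi}, for each $i \in I$ write $H \cong H(\ell, i) \otimes H_i$, where $H(\ell, i)$ is spanned by $\xi$ and the words not ending in~$i$. Let $\rh_i \colon L (H_i) \to L (H)$ be the resulting right representation. Then \cite[Theorem~1.6.5]{VoiDyNi}, or a direct check on elementary tensors, gives the following commutation relations:
\begin{itemize}
\item for $j \neq i$, $\rh_i (T)$ commutes with the left operators $\ld_j (S)$ for all $T$ and~$S$;
\item for $j = i$, $\rh_i (b)$ commutes with $\ld_i (\pi_i (a))$ whenever $b \in M_i$ and $a \in A_i$.
\end{itemize}
Since $\pi (A)$ is generated by the $\ld_j (\pi_j (A_j))$, we get $\rh_i (M_i) \subseteq \pi (A)'$ for all~$i$. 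Let $N$ be the von Neumann algebra generated by $\bigcup_i \rh_i (M_i)$.

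Now set $K_i = {\overline{M_i \xi_i}} \subseteq H_i$ and $K_i^{\circ} = K_i \ominus \C \xi_i$. Let $K \subseteq H$ be the free product of the $(K_i, \xi_i)$, namely the span of $\xi$ and the word spaces $K_{i_n}^{\circ} \otimes \cdots \otimes K_{i_1}^{\circ}$. An induction on word length, using $\rh_i (b) \xi = b \xi_i$ and the way $\rh_i$ acts on words whose rightmost letter is or is not in~$i$, shows ${\overline{N \xi}} = K$. Moreover, $N|_K$ is exactly the right free product representation of $(M_i, \gm_i)$. By faithfulness of $\gm_i$, each $M_i|_{K_i}$ is a faithful copy of $M_i$ with cyclic and separating vector $\xi_i$. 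The flip unitary reversing the order of tensor factors in each word carries the right free product representation to the left one, and this gives the asserted unitary equivalence of $N|_K$ with the free product representation of $\bigastr_{i \in I} (M_i, \gm_i)$.

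It remains to show that $p$, the projection onto ${\overline{\pi (A)' \xi}}$, lies in $\pi (A)''$, and that its range is~$K$.
\begin{itemize}
\item The subspace ${\overline{\pi (A)' \xi}}$ is invariant under the algebra $\pi (A)'$, so $p \in \pi (A)''$.
\item Since $N \subseteq \pi (A)'$, we have $K \subseteq {\overline{\pi (A)' \xi}}$.
\end{itemize}
The reverse inclusion is the main obstacle: every $x \in \pi (A)'$ must satisfy $x \xi \in K$. I would attack this through the projections $q_i \in \pi_i (A_i)''$ onto $K_i$; indeed $q_i \in M_i'$ because $K_i$ is $M_i$-invariant. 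Then show that the projection onto $K$ lies in $\pi (A)''$; since $\xi \in K$, this gives $\pi (A)' \xi \subseteq K$. The route is to compress to $K$, where $\xi$ is cyclic and separating for both the left and the right free products of $q_i \pi_i (A_i)'' q_i$ and $M_i q_i$. There Voiculescu's commutant theorem \cite[Theorem~1.6.5]{VoiDyNi} says that the commutant of the left free product von Neumann algebra is the right one. Combining this with $p \in \pi (A)''$ should give $p \pi (A)' = N p$, which completes the identification. If expressing the projection onto $K$ inside $\pi (A)''$ proves awkward, I would instead verify directly that $\langle x \xi, \eta \rangle = 0$ for $\eta$ in the orthogonal complement of each word space. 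For this, write such vectors using the operators $\ld_i (\pi_i (a)(1 - q_i))$ applied to shorter words, and use that $x$ commutes with these operators.
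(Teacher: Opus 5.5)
Most of your setup matches the paper: faithfulness of $\gm_i$, the inclusion $N \subseteq \pi (A)'$, the identification $\overline{N \xi} = K$ with $N|_K$ flip-equivalent to the free product representation of $\bigastr_{i \in I} (M_i, \gm_i)$, $p \in \pi (A)''$, and $K \subseteq pH$. The paper's $V$ and $M$ are your $K$ and~$N$. \textbf{The gap is the step you flag, $\pi (A)' \xi \subseteq K$, and neither of your suggestions proves it as written.}

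The first suggestion is circular. Compressing to $K$ and applying the commutant theorem there constrains $p_K x p_K$ for $x \in \pi (A)'$ only if you already know $p_K \in \pi (A)''$. Without that, $p_K x p_K$ need not commute with the compressed left operators, and $p_K \in \pi (A)''$ is exactly what you are trying to prove.

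The fallback, as described, reaches only words whose \emph{first} letter lies in some $(1 - q_i) H_i$. Also, the operators should be $\ld_i ((1 - q_i) \pi_i (a))$, whose adjoints kill~$\xi$. By contrast, $\ld_i (\pi_i (a) (1 - q_i))$ kills $\xi$ and every word not beginning with~$i$, so applied to shorter words it produces nothing. For a word whose first such letter is interior, the natural one-letter-at-a-time induction peels off a leading letter in $K_{i}^{\circ}$ only to regenerate one, and goes in circles.

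The paper avoids all of this. Voiculescu's commutant theorem \cite[Theorem~1.6.5]{VoiDyNi} needs no separating vector, so the paper applies it on $H$ itself to the von Neumann algebras $\pi_i (A_i)''$. By normality of~$\ld_i$, $\pi (A)' = \bigl( \bigcup_i \ld_i (\pi_i (A_i)'') \bigr)'$. This gives $\pi (A)' = N$ outright, hence $pH = \overline{N \xi} = K$ and $p \pi (A)' = pN$, with no reduction to $K$ needed.

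If you want to keep your route, here is the missing ingredient. Let $H (r, i)$ be the span of $\xi$ and the words not beginning with~$i$, so that $H \cong H_i \otimes H (r, i)$ is the identification defining $\ld_i$. Under it, $K$ corresponds to $K_i \otimes (K \cap H (r, i))$. Hence $\ld_i (q_i y q_i) K \subseteq K$ for $y \in \pi_i (A_i)''$, and $\ld_i (1 - q_i) K = 0$.

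Now take a word $\eta = \ld_{i_1} (T_1) \cdots \ld_{i_n} (T_n) \xi$ whose first letter outside $K_{i_j}^{\circ}$ sits in position~$m$. Choose $T_j = q_{i_j} T_j q_{i_j}$ for $j < m$ and $T_m = (1 - q_{i_m}) \pi_{i_m} (a)$. Then $\langle x \xi, \eta \rangle = \langle x \, \ld_{i_n} (T_n^*) \cdots \ld_{i_1} (T_1^*) \xi, \, \xi \rangle = 0$. This holds because $\ld_{i_m} (T_m^*)$ annihilates the vector $\ld_{i_{m-1}} (T_{m-1}^*) \cdots \ld_{i_1} (T_1^*) \xi$, which lies in~$K$.

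Such $\eta$ have dense span in $H \ominus K$. So $pH = K$ and $p = p_K \in \pi (A)''$. Only then does your compression to $K$, with the cyclic-and-separating form of the commutant theorem, correctly give $p \pi (A)' = Np$.
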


\begin{proof}
For $i \in I$, define $V_i = {\overline{M_i \xi_i}} \S H_i$,
and define $V_i^{\circ} = V_i \cap (\C \xi_i)^{\perp}$.
Then set $V = \bigast_{i \in I} V_i \S H$.
For $i \in I$, let $\rh_i \colon M_i \to L (H)$ be the
restriction to $M_i$ of the representation $\rh_i$ in
\cite[Definition~1.6.4]{VoiDyNi}.
(There is a misprint in the definition: for $b \in L (H_i)$,
$\rh_i (b)$ should be $W_i (1 \otimes b) W_i^{-1}$.
The formula is correctly given in the earlier paper
\cite[1.2]{Vclc85}:
the operator $W_i$ in \cite{Vclc85} is the inverse of
the operator called $W_i$ in \cite{VoiDyNi}.)
Set $M = \left( \bigcup_{i \in I} \rh_i (M_i) \right)''$.
By \cite[Theorem~1.6.5]{VoiDyNi}, we have $\pi (A)' = M$.
(In~\cite{VoiDyNi}, the algebras $A_i$ are von Neumann algebras,
but this does not affect the conclusion.)

For $i \in I$, the state $\gm_i$ is faithful
because $\xi_i$ is cyclic for $\pi_i (A_i)$.
One checks that ${\overline{M \xi}} = V$.
One further checks that
$\langle \rh_i (b) \xi, \xi \rangle
 = \langle b \xi_i, \xi_i \rangle = \gm_i (b)$
for $i \in I$ and $b \in M_i$.
(Caution: the tensor factors in the domain of $W_i$
before \cite[Definition~1.6.4]{VoiDyNi} are in the opposite order
from those in the domain of $V_i$
in \cite[Definition~1.5.1]{VoiDyNi}.)
Therefore $b \mapsto \rh_i (b) |_{V_i}$ is unitarily equivalent
to the Gelfand-Naimark-Segal representation of $(M_i, \gm_i)$,
preserving the standard cyclic vectors.
It follows that the family $( \rh_i (\cdot) |_V )$ of representations
of $M_i$ on~$V$
is unitarily equivalent to the family of representations of $M_i$
used to construct $\bigastr_{i \in I} M_i$.
Recalling that $p$ is the \pj{} on~$V$,
it follows that $p M \cong \bigastr_{i \in I} M_i$.
This completes the proof.
\end{proof}

\begin{proof}[Proof of \cref{P_6302_Nonnuc}]
For $n \in \N$ let $\om_n$ be the reduced free product state
on $(A, \om)^{\fr n}$.

The reduced free product of two unital nuclear \ca{s}
% (amalgamated over~$\C$, as usual)
is nuclear whenever
the states have faithful Gelfand-Naimark-Segal representations
and one of the states is pure, by \cite[Theorem 4.8.7]{Brown_Ozawa}.
Thus, if $A$ is nuclear and $\om$ is pure, then
$(A, \om)^{\fr n}$ is nuclear for all~$n$ by induction,
and $(A, \om)^{\fr \I}$ is nuclear by taking direct limits.

We claim that if $( (A_i, \om_i))_{i \in I}$ is a family of unital \ca{s}
equipped with pure states $\om_i \colon A_i \to \C$,
then the free product state
$\om$ on $A = \bigastr_{i \in I} (A_i, \om_i)$ is again pure.
The proof is an application of \cref{T_6402_CommFP}.
Using the notation there, we have $M_i = \C \cdot 1$ for all $i \in I$.
It follows from the unitary equivalence statement at the end
of \cref{T_6402_CommFP} that the \pj{} $p$ there has rank~$1$.
Recall that $\xi$ is the standard cyclic vector
for the free product representation $\pi \colon A \to L (H)$.
Since $p \xi = \xi$ and $p \in \pi (A)''$,
we deduce that the rank one \pj{} on $\C \xi$ is in $\pi (A)''$.
Since $\xi$ is cyclic, all rank one \pj{s} are in $\pi (A)''$.
Therefore $\pi$ is irreducible.
The claim follows.

We now assume that $(A, \om)^{\fr \I}$ is nuclear.

We first claim that for any $(A_1, \rh_1)$ and $(A_2, \rh_2)$
for which the Gelfand-Naimark-Segal representations are faithful,
and regarding $A_1$ and $A_2$ as subalgebras of $A_1 \fr A_2$,
there is a conditional expectation $E$ from $A_1 \fr A_2$ onto~$A_1$
such that if $a \in A_2$ then $E (a) = \rh_2 (a) \cdot 1$.
This is surely known, and is immediate from a known more general
result, but we don't know a direct reference.
To prove the claim, in \cite[Theorem 4.8.5]{Brown_Ozawa} take
\[
D = \C, \qquad
E_1^{A_1} = \rh_1, \qquad
E_2^{A_1} = \rh_2, \qquad
B_1 = A_1, \qquad
B_2 = \C,
\]
\[
E_1^{B_1} = \rh_1, \qquad
E_2^{B_2} = \id_{\C}, \qquad
\te_1 = \id_{A_1}, \andeqn
\te_2 = \rh_2.
\]
The result is a unital completely positive map
$E \colon A_1 \fr A_2 \to A_1$
such that $E (a) = a$ for $a \in A_1$ and $E (a) = \rh_2 (a) \cdot 1$
for $a \in A_2$.
It is a conditional expectation by \cite[Theorem 1.5.10]{Brown_Ozawa}.
This proves the claim.

In particular, for every $n \in \N$ we get a conditional expectation
$E_{n, n + 1} \colon (A, \om)^{\fr (n + 1)} \to (A, \om)^{\fr n}$.
Composition gives conditional expectations
$E_{m, n} \colon (A, \om)^{\fr n} \to (A, \om)^{\fr m}$
for $m, n \in \N$ with $m \leq n$ such that,
if also $k \leq m$, then $E_{k, m} \circ E_{m, n} = E_{k, n}$.
Taking direct limits, for $m \in \N$
there is a conditional expectation
$E_{m, \I} \colon (A, \om)^{\fr \I} \to (A, \om)^{\fr m}$.

Using the completely positive approximation property,
it is immediate that if $A$ is nuclear and $E \colon A \to B$ is a
conditional expectation onto a subalgebra, then $B$ is nuclear.
(This also follows from
\cite[Exercise 2.1.3 and Exercise 2.1.5]{Brown_Ozawa}.)
Nuclearity of $(A, \om)^{\fr \I}$ therefore implies nuclearity
of $(A, \om)^{\fr n}$ for all $n \in \N$.
In particular, $A$ is nuclear.

We finish the proof by showing that if $\om$ is not pure then
$(A, \om)^{\fr \I}$ is not nuclear.
For any state $\rh$ on a \ca~$D$, let $\pi_{\rh}$
% $\pi_{\rh} \colon D \to L (H_{\rh})$
denote the Gelfand-Naimark-Segal representation.
Since $\om$ is not pure, $\dim ( \pi_{\om} (A)' ) \geq 2$.
\cref{T_6402_CommFP} implies that $\pi_{\om_2} (A \fr A)'$
has a cutdown which is the von Neumann algebra reduced free
product of $\pi_{\om} (A)'$ with itself.
So $\dim ( \pi_{\om_2} (A \fr A)' ) = \I$.
Again using $\dim ( \pi_{\om} (A)' ) \geq 2$,
apply \cite[Theorem~4.1]{Ueda} (see the notation and
standing hypotheses at the beginning of \cite[Section~4]{Ueda})
to deduce that
the diffuse part of the von Neumann algebra reduced free product
of $\pi_{\om} (A)'$ and $\pi_{\om_2} (A \fr A)'$ is nontrivial,
and \cite[Corollary~4.3]{Ueda} to see that
the diffuse part is prime.
Since hyperfinite factors with separable predual are not prime,
this von Neumann algebra reduced free product is not hyperfinite.
Using \cref{T_6402_CommFP} again, in the same way as above,
we see that $\pi_{\om_3} (A \fr A \fr A)'$ is not hyperfinite.
So also $\pi_{\om_3} (A \fr A \fr A)$ is not hyperfinite,
whence $A \fr A \fr A$ is not nuclear.
Since there is a conditional expectation from $(A, \om)^{\fr \I}$
to $A \fr A \fr A$,
it follows that $(A, \om)^{\fr \I}$ is not nuclear.
\end{proof}

\begin{qst}\label{Q_6130_DiffStates}
Let $n \in \{ 2, 3, \ldots, \I \}$.
Let $\om_1$ and $\om_2$ be states on~${\mathcal{O}}_{n}$.
Write $( {\mathcal{O}}_{n}, \om_j)^{\fr \I} = (D_j, \rh_j)$.
When is $D_1 \cong D_2$?
When is $(D_1, \rh_1) \cong (D_2, \rh_2)$
(state preserving isomorphism)?
\end{qst}

The most natural cases to consider are $n = 2$ and $n = \I$.

\cref{P_6302_Nonnuc} tells us that there are two cases to consider:
$\om_1$ and $\om_2$ are both pure,
and $\om_1$ and $\om_2$ are both not pure.
If $\om_1$ and $\om_2$ are both pure, then,
by \cite[Theorem 1.1]{KshOzwSak},
there is $\af \in \Aut ({\mathcal{O}}_{n})$
(in fact, $\af$ can be chosen to be approximately inner) such that
$\om_2 = \om_1 \circ \af$.
Therefore in fact $(D_1, \rh_1) \cong (D_2, \rh_2)$.
But what happens if both $\om_1$ and $\om_2$ are not pure?

In \cref{Q_6130_DiffStates}
we wrote $\mathcal{O}_{n}$ for concreteness, but
the same discussion applies to any Kirchberg algebra
which satisfies the Universal Coefficient Theorem.

\begin{qst}\label{Q_6130_AbsOI}
Let $C$ be a purely infinite simple separable \uca{}
and let $\om$ be a faithful state on~$C$.
Let $\sm$ be a state on~${\mathcal{O}}_{\I}$.
Is it true that ${\mathcal{O}}_{\I} \fr C^{\fr \I} \cong C^{\fr \I}$?
Is there an isomorphism which preserves the free product states?
\end{qst}

\begin{qst}\label{Q_6130_AbsC01}
Let $C$ be a purely infinite simple separable \uca{}
and let $\om$ be a faithful state on~$C$.
Let $\rh$ be the tracial state on $C ([0, 1])$
induced by Lebesgue measure.
Is it true that $C ([0, 1]) \fr C^{\fr \I} \cong C^{\fr \I}$?
Is there an isomorphism which preserves the free product states?
\end{qst}

In both \cref{Q_6130_AbsOI} and \cref{Q_6130_AbsC01},
both algebras have the same K-theory.
To see this in \cref{Q_6130_AbsOI},
use the case $B = \C$ of \cite[Theorem 6.4]{Thn2} to see that
$K_* ({\mathcal{O}}_{\I} * C^{\fr \I}) \cong K_* (C^{\fr \I})$
(full free product amalgamated over~$\C$ on the left), and then use
the case $B = \C$ of \cite[Theorem 1.1]{Hsgw} to get
$K_* ({\mathcal{O}}_{\I} \fr C^{\fr \I}) \cong K_* (C^{\fr \I})$.
The proof for \cref{Q_6130_AbsC01} is the same.
Also, both algebras are purely infinite and simple,
by the last statement in \cite[Theorem 2.1]{Dykema_Rordam_II}.

Suppose further that $C$
is nuclear and satisfies the Universal Coefficient Theorem.
In both questions,
both algebras then satisfy the Universal Coefficient Theorem
by \cref{P_6302_UCT}.
If $\om$ is pure, then in both questions
we claim that the algebras are isomorphic.
Indeed, the infinite reduced free product $(C, \om)^{\fr \I}$ is nuclear
by \cref{P_6302_Nonnuc}, and then
${\mathcal{O}}_{\I} \fr C^{\fr \I}$ and $C ([0, 1]) \fr C^{\fr \I}$
are nuclear by \cite[Theorem 4.8.7]{Brown_Ozawa}
and the pureness statement in \cref{P_6302_Nonnuc}.
Isomorphism now follows from the Kirchberg-Phillips classification.
But when $\om$ is not pure,
both questions are already interesting in this case.
Indeed, they are interesting when $C = {\mathcal{O}}_{\I}$ and, in
\cref{Q_6130_AbsOI}, the states $\om$ and $\sm$ are not in the
same orbit under the action of $\Aut ({\mathcal{O}}_{\I})$
on the state space.

One key difficulty is as follows.
In \cref{Q_6130_AbsOI}, for example,
we know by \cite[Theorem 3.3]{LinPhl2}
that if $D$ is a purely infinite simple unital \ca,
then any two unital \hm{s}
$\ph_1, \ph_2 \colon {\mathcal{O}}_{\I} \to D$
are approximately unitarily equivalent.
(See \cite[Theorem~B]{BwnGab} for a more general result.)
This isn't good enough.
In the proof of \cref{thm_main},
which has a unital direct limit $A$ of one dimensional NCCW complexes
in place of ${\mathcal{O}}_{\I}$,
we needed $\af \in \Aut (D)$ (with $D = A \fr C$)
such that $\af \circ \ph_1$ is close to $\ph_2$ on a large finite set,
and such that, for example, $\af \fr \id_C$ is well defined.
Thus, $\af$ must preserve the state on~$D$.
Preservation of the state is true there
because $\af$ is inner and the state is tracial,
but in general we know no way to be sure that this happens.
This difficulty suggests the following problem.

\begin{qst}\label{Q_6130_aueSt}
Let $D$ be a unital purely infinite simple separable \ca,
let $\mu$ be a state on~$D$,
and let $\ph_1, \ph_2 \colon {\mathcal{O}}_{\I} \to D$ be unital \hm{s}
such that $\mu \circ \ph_1 = \mu \circ \ph_2$.
Is it true that for every $\ep > 0$ and every finite subset
$F \SQ {\mathcal{O}}_{\I}$, there is $\af \in \Aut (D)$
such that $\| \ph_2 (a) - \af (\ph_1 (a)) \| < \ep$ for all $a \in F$,
and also $\mu \circ \af = \mu$?
If so, can $\af$ be taken to be inner?
\end{qst}

One can ask similar questions about reduced free products $C^{\fr \I}$
when $C$ is stably finite but $C^{\fr \I}$ is purely infinite
because the state used on~$C$ is not tracial.

\bibliographystyle{plain}
\bibliography{free_bib}

\end{document}